\numberwithin{equation}{section}
\newcommand\footnoteref[1]{\protected@xdef\@thefnmark{\ref{#1}}\@footnotemark}
\newcommand{\dr}{\partial}
\DeclareMathOperator{\divg}{div}
\DeclareMathOperator{\loc}{loc}
\newcommand{\BMO}{{\rm BMO}}
\newcommand{\HS}{H\hspace{-0.6mm}S}
\newcommand{\R}{\mathbb R}
\newcommand{\N}{\mathbb N}
\newcommand{\wt}{\widetilde}
\newcommand{\D}{\mathbb D}
\newcommand{\LL}{\mathcal H}
\renewcommand{\L}{L}
\newcommand{\C}{\mathcal C}
\newcommand{\OO}{\mathcal O}
\newcommand{\abs}[1]{\left\vert#1\right\vert}
\newcommand{\br}[1]{\left(#1\right)}
\newcommand{\set}[1]{\left\{#1\right\}}
\renewcommand{\divg}{\mathrm{div}}
\renewcommand{\d}{\, \mathrm{d}} %differential
\newcommand{\om}{\Omega}
\newcommand{\pom}{\partial\Omega}
\newcommand{\E}{\mathsf{E}} %Energy space
\newcommand{\HT}{H_t} %Hilbert transform
\newcommand{\dhalf}{D_t^{1/2}} %half time derivative
\newcommand{\Hdot}{\dot{H}\protect{\vphantom{H}}} %homogeneous Sobolev spaces
\newcommand{\pd}{\partial}
\newcommand{\cl}[1]{\overline{#1}} %closure
\newcommand{\dint}{\int\!\!\!\!\!\int}
\def\Yint#1{\mathchoice
	{\YYint\displaystyle\textstyle{#1}}%
	{\YYint\textstyle\scriptstyle{#1}}%
	{\YYint\scriptstyle\scriptscriptstyle{#1}}%
	{\YYint\scriptscriptstyle\scriptscriptstyle{#1}}%
	\!\dint}
\def\YYint#1#2#3{{\setbox0=\hbox{$#1{#2#3}{\iint}$}
		\vcenter{\hbox{$#2#3$}}\kern-.51\wd0}}
\def\longdash{\mkern-1.5mu{-}\mkern-7.5mu{-}} 
\def\fiint{\Yint\longdash}
\newcommand{\Z}{{\mathbb Z}}
\theoremstyle{plain}
\newtheorem{theorem}[equation]{Theorem}
\newtheorem{lemma}[equation]{Lemma}
\newtheorem{corollary}[equation]{Corollary}
\newtheorem{proposition}[equation]{Proposition}
\newtheorem{definition}[equation]{Definition}
\theoremstyle{definition}
\theoremstyle{remark}
\begin{document}

\title[The $L^p$ Regularity problem for parabolic operators]{The $L^p$ regularity problem for parabolic operators with transversally  independent coefficients}

\author[Dindo\v{s}]{Martin Dindo\v{s}}
\address{School of Mathematics, 
The University of Edinburgh and Maxwell Institute of Mathematical Sciences, Edinburgh, UK}
\email{M.Dindos@ed.ac.uk}

\author[Pipher]{Jill Pipher}
\address{Department of Mathematics, 
 Brown University, RI, US}
\email{jill\_pipher@brown.edu}

\author[Ulmer]{Martin Ulmer}
\address{Department of Mathematics, 
 Brown University, RI, US}
\email{martin\_ulmer@brown.edu}

%\thanks{The first author has been supported in part by EPSRC grant EP/Y033078/1.}

\begin{abstract} 
In this paper, we fully resolve the question of whether the Regularity problem for the parabolic PDE
$\partial_tu - \divg(A\nabla u)=0$ on the domain $\R^{n+1}_+\times\R$ is solvable for some $p\in (1,\infty)$ under the assumption that the matrix $A$ is elliptic, has bounded and measurable coefficients and its coefficients are independent of the spatial variable $x_{n+1}$ (which is transversal to the boundary). 
We prove that for some $p_0>1$ the Regularity problem is solvable in the range $(1,p_0)$. An analogous 
result for the Dirichlet problem has been considered earlier by Auscher, Egert and Nystr\"om, however the Regularity problem represents an additional step up in difficulty. 
In the elliptic case, the analog of the question considered here was resolved for both the Dirichlet and the Regularity problems by Hofmann, Kenig, Mayboroda, and Pipher.

The main result of this paper complements a recent work of two of the authors with L. Li showing solvability of the parabolic Regularity problem for data in some $L^p$ spaces when the coefficients satisfy
a natural Carleson condition (which is a parabolic analog of the so-called DKP-condition).

\end{abstract}

\maketitle

%\ms\noindent{\bf Keywords:} 

%\ms\noindent
\subjclass{2020 Mathematics Subject Classification: 35K20, 35K10}

\tableofcontents

\section{Introduction}

%%%%%%%%%%%%%%

In this paper, we solve the Regularity boundary value problem 
for a class of parabolic operators of 
the form 
\begin{equation}\label{E:pde}
			\LL u:= \dr_t u -\divg (A \nabla u)=0   \quad\text{in } \Omega, 
\end{equation}
defined in a domain $\Omega = \mathcal O \times \R$ where $\mathcal O = \R^{n+1}_+$ or
more generally 
 $\mathcal O$ is an unbounded Lipschitz domain given by a graph $\{x_{n+1}>\phi(x_1,\dots,x_n)\}$ for some Lipschitz function $\phi$.

The matrix $A= [a_{ij}(X, t)]$ is a $n+1\times n+1$ matrix satisfying the uniform ellipticity condition with $X \in \mathcal O$, $t\in \R$.
That is, there exist positive constants $\lambda$ and $\Lambda$ such that
\begin{equation}
	\label{E:elliptic}
	\lambda |\xi|^2 \leq \sum_{i,j} a_{ij}(X,t) \xi_i \xi_j,\qquad  \|A\|_{L^\infty}\leq \Lambda ,
\end{equation}
for almost every $(X,t) \in \Omega$ and all $\xi \in \R^{n+1}$.

  We aim to solve boundary value problems with boundary data and/or certain derivatives of data in some $L^p$ space, for $1 < p < \infty$.
This inquiry has classical
roots in the study of harmonic functions or solutions to the heat equation in smooth domains, where the objective is to prove non-tangential convergence of solutions to
this non-smooth boundary data. (See Definitions~\ref{DefRpar} and~\ref{DefDir}.)
Solving these boundary value problems requires some
additional conditions on the coefficients of the matrix $A$, beyond bounded measurable and elliptic. One natural specific condition on $A$ is the focus of this paper.

The Dirichlet problem involves boundary data prescribed to be
in some $L^p$ space on the boundary, with respect to surface measure, while the Regularity problem imposes an additional smoothness assumption on the Dirichlet data; namely, the existence of ``tangential derivatives" in $L^p$. In addition, solvability entails the more stringent requirement of showing nontangential convergence of the {\it gradient} of solutions to the boundary data, all of which must be suitably defined.

When we study these boundary value problems, we start with the so-called weak solutions that come from 
functional analysis methods.
The classical elliptic $L^p$ Dirichlet problem when $p=2$ can be thought of as being $1/2$ derivative below the natural class of {\it energy solutions}, the weak solutions that arise from applying the Lax-Milgram lemma. These Lax-Milgram solutions have finite energy, meaning that $\iint_{\Omega}|\nabla u|^2<\infty$, and also have traces in the Besov-Sobolev space $\dot {B}^{2,2}_{1/2}(\pom)$. This trace space is an $L^2$-based space of functions having $1/2$-derivative on $\pom$,
appropriately understood. An analogous class of energy solutions that accounts 
for the time derivative can be defined in the parabolic setting, and this is done in subsection \ref{RwEs}. 
Hence we will see that the $L^p$ Regularity problem when $p=2$ can be thought of as being $1/2$ derivative above the energy class of solutions discussed above.\vglue1mm

Solving the Dirichlet problem is ``easier" than solving Regularity for a number of reasons. First,
solutions to operators like \eqref{E:pde}, even when the coefficients of the elliptic matrix $A$ are merely bounded and measurable, are known to be H\"older continuous of some order (\cite{Na}). But the derivatives of solutions are not continuous, and are not necessarily defined pointwise. This brings in new difficulties even in formulating the nontangential convergence problem. 
Second, the study of the Dirichlet problem can essentially be understood as a study of the {\it elliptic/parabolic measure}. Properties of this measure, including tools that involve positivity of solutions and maximum principles, are useful for studying the Dirichlet problem, but insufficiently useful when studying Regularity. For these latter investigations, a completely different approach is needed, requiring a direct bound of the nontangential maximal function. Finally, the interpolation and localization needed to extrapolate from solvability in one $L^p$ space to solvability in a range of $L^q$ spaces is simply more complicated for the Regularity and Neumann problems, by comparison to Dirichlet. 

Motivation for the study of the Regularity problem comes
 from several directions. In some sense, it is a companion problem - and, generally speaking, an easier one - to the Neumann problem, which prescribes the normal, or co-normal, derivative at the boundary. In several situations, solvability of 
 the Regularity problem has preceded and aided in solving the 
 Neumann problem. However, it turns out there are more direct relationships between the Regularity problem for an operator, and the Dirichlet problem for its adjoint. In the setting of elliptic divergence form operators in Lipschitz domains, a certain duality was observed (\cite{KP1}) for the Regularity problem. Namely, the solvability of the Regularity problem with data in $L^p$ implies solvability of the Dirichlet problem for the adjoint operator in \(L^{p'}\), where $p'$ and $p$ are dual 
 exponents ($1/p + 1/p' = 1$).  This same duality was extended to the present setting of time-varying parabolic operators (and also in time-varying Lipschitz cylinders) in \cite{DinD}. 
So far, this duality in full generality is only known to go in one direction. However, with additional assumptions on the coefficients  and on the domain, there are positive results (\cite{DHP}, \cite{AM} and \cite{MPT}), while in rougher (fractal) domains, there are negative results (\cite{Ga}).

%%%%%%%%%%
Before setting the stage for the parabolic results contained here, we give a high-level
overview of the current state of knowledge on boundary value problems  in the elliptic setting, that is, for operators
of the form $L = -\divg (A \nabla)$.
There are relatively
few, primarily two\footnote{Some conditions that do not quite fit into this framework, and for which solvability of boundary value problems have been studied, include the recent work \cite{U2} and \cite{DGQM}.}, 
natural conditions on the coefficients that have been well studied in the elliptic setting. Note, that these two conditions include many previously considered cases such as constant/smooth/$C^\alpha$ or Dini condition on modulus of continuity of coefficients, which we therefore do not mention further. 

The first condition, introduced in \cite{KPCarl}, requires that a certain expression in the gradient (and, in the parabolic setting, the time derivative) of the coefficients defines a Carleson measure on the domain. It arises naturally from change-of-variable considerations and has a long and rich history that we cannot do justice to here. Suffice it to say that a number of different boundary value problems---Dirichlet, Regularity, and Neumann---have been studied, and solved, under various assumptions such as the size of the Carleson measure or the geometry of the domain. Papers \cite{DHP}, \cite{DPR}, and \cite{MPT} are especially relevant to the results here, complex coefficients have been studied in \cite{DP1}, \cite{DP2}, and \cite{hofmann_layer_2015}, and many more references can be found in the survey article \cite{DP-vietnam}.

A second, conceptually different line of inquiry has developed in parallel concerning solvability of
the elliptic Dirichlet and Regularity problems under a second and different condition that is meaningful under certain assumptions on the domain (such as Lipschitz boundaries). Here, one imposes the condition that the coefficients of the matrix $A$ are independent of the variable transversal to the boundary. In
the setting of the upper half space, and with our notation, this would be stated as:
\begin{equation}\label{Cone-ell}
A(x,x_{n+1}) = A(x) \qquad\mbox{for all }x=(x_1,x_2,\dots,x_n)\in\R^n\mbox{ and }x_{n+1}>0.
\end{equation}
Once again, this condition of independence of the transversal variable arose naturally from a change of variables mapping the domain above a Lipschitz graph to the upper half space. For symmetric matrices satisfying \eqref{Cone-ell}, \cite{JK} showed solvability of the Dirichlet problem for data in $L^2$, obtaining a generalization of Dahlberg's celebrated result for the Laplacian on Lipschitz domains. When the matrix is not necessarily symmetric, it turns out that the best one can say is that the Dirichlet problem is solvable for data in {\it some} $L^p$ space (\cite{KKPT}, \cite{HKMP1}), and the Regularity problem is solvable in the dual range (\cite{HKMP2}). The results of \cite{HKMP1} and \cite{HKMP2} relied crucially on the technology developed to solve the Kato conjecture (\cite{AHLMT}) for complex valued elliptic operators
(with matrices in block form and satisfying the 
above independence condition).

The parabolic setting always presents
additional obstacles, and therefore the development of the theory often lags its elliptic counterpart. Recently, the 
paper \cite{DLP} closed one gap, solving the $L^p$ Regularity problem for the PDE $\LL u=0$ for a certain range of $p$ assuming (1) that
the measure defined by
\begin{equation}\label{E:1:carl}
d\mu = \left( \delta(X)|\nabla A|^2 + \delta(X)^3|\partial_t A|^2  \right) dX dt
\end{equation}
is the density of a Carleson measure on $\Omega$ with Carleson norm $\|\mu\|_C$, and (2) that
\begin{equation}\label{E:1:bound}
\delta(X)|\nabla A| + \delta(X)^2|\partial_t A| \leq K<\infty.
\end{equation}
Here and in the sequel, $\delta(X)$ denotes the parabolic distance from $X$ to the boundary of $\om$. Since $\partial\Omega=\partial\mathcal O\times\R$, the infimum
\[
\inf_{(Y,\tau)\in\pom}\br{\abs{X-Y}^2+\abs{t-\tau}}^{1/2}
\]
is attained at $\tau=t$, so it depends only on $X$ and reduces to the Euclidean distance
\[
\delta(X)=\inf_{Y\in\partial\mathcal O}\abs{X-Y}.
\]

This left open the question of solvability of the parabolic Regularity problem for operators that satisfy the appropriate 
analog of \eqref{Cone-ell}, and is our focus here.
For the PDE $\LL u=0$ and for $\mathcal O = \mathbb R_+^{n+1}$, we assume the following independence condition: 
\begin{equation}\label{Cond-par}
A(x,x_{n+1},t) = A(x,t) \qquad\mbox{for all }x=(x_1,x_2,\dots,x_n)\in\R^n,\, x_{n+1}>0\mbox{ and }t\in\R,
\end{equation}
as we allow the matrix $A$ to depend on the time variable as well. 
(When convenient we will also refer to the variable $x_{n+1}$ as $\lambda$ in order to shorten notation.)

It is important to emphasize that conditions \eqref{E:1:carl}-\eqref{E:1:bound} and \eqref{Cond-par}
are logically independent. Neither condition implies the other, and together with the recent
work in \cite{DLP}, the present paper completes the picture for the parabolic Regularity
problem under the two principal frameworks currently studied in the literature.

Under the condition \eqref{Cond-par} the solvability of the  $L^p$ Dirichlet problem for the equation \eqref{E:pde} was 
resolved in \cite{AEN2}. The parabolic Kato conjecture was solved in \cite{AEN}, and, using a different approach, the parabolic Kato conjecture with weights was resolved in \cite{AtEN}. The 
perspective and methodology of \cite{AtEN} proved particularly relevant and valuable for the results of this paper.

We see that it is natural and interesting to solve this Regularity problem for general time-varying parabolic operators whose 
coefficients satisfy  \eqref{Cond-par}
in the optimal dual $L^p$ range, especially as it completes the picture in the parabolic case for both Dirichlet and Regularity problems under the two well
studied matrix conditions: \eqref{E:1:carl} or \eqref{Cond-par}.
To this end we show solvability of the Regularity problem in an interval $(1,p_0)$
for some $p_0>1$, and then argue that, moreover, $p_0$ is determined by the range 
$(p_0',\infty)$, of solvability of the Dirichlet problem for the adjoint PDE: $-\partial_tu-\divg(A^*\nabla u)=0$,   where $1/p_0+1/p_0'=1$.
 \vglue1mm

%Moreover, there is another duality dichotomy between solvability of the Regulaity problem and the Dirichlet problem (for the adjoint operator) that has been critically useful in studying the full range of $L^p$ estimates for Regularity. Established in the elliptic case in \cit{Shen}, and extended to our parabolic setting in time-varying  Lipschitz cylinders in \citeDD}, it extrapolates the $L^q$ solvability of the REgularity problem to the exact range dual to $L6p$ solvaibiltiy of the adjoint Dirichlet problem. %Although the latter two results are only formulated with bounded bases \(\mathcal{O}\), their arguments are local and are expected to also work in the present setting of this article. 

  %In theory, this duality suggests that the Regularity problem is harder to solve, and also subsumes, the Dirichlet problem. In practice, and in this paper in particula, the proof of Regularity typically relies on the fact that elliptic/parabolic measure lies in a Muckenhoupt class. {\color{blue}o we want to say this? Or will it just be confusing at this point?}

We conclude this introduction by pointing out another reason that,
historically, the development of the parabolic theory has lagged that of the elliptic theory. In the parabolic setting, the time variable enjoys a different scaling from that of the spatial variables: the equation 
tells us that, roughly speaking,  $\partial_t u\sim \nabla^2 u$. That is, one spatial gradient of $u$ corresponds to $1/2$ derivative in time.
Hence we find that the class of energy solutions (c.f. \cite{AEN}) for parabolic PDEs is the space with {\it finite energy}:
$$\iint_{\Omega}|\nabla u|^2+\iint_{\Omega}|D_t^{1/2}u|^2<\infty.$$
The presence of the half derivative in time, a non-local operator, is responsible for many additional difficulties encountered in this setting. 
The finite energy solutions have traces on the boundary with $1/2$-spatial and $1/4$-time derivatives;  these considerations are explained with care in subsection 2.2 of \cite{DLP} and apply without changes to the setting of this paper.
Therefore, in formulating the parabolic $L^p$-based Regularity problem, one looks for solutions of the parabolic equation with prescribed data having one full spatial derivative and a half-time derivative on $\pom$. Some of the difficulties in dealing with the non-local half derivative have been mitigated by the results of \cite{Din23}, but there are still challenges in using the equation to recover half derivatives on the boundary. 

For some further context, we note that
the initial formulation of the Regularity problem for the heat equation is from \cite{FR}, where it was formulated for $C^1$ cylinders. This was then re-formulated and solved in
bounded Lipschitz cylinders in \cite{Bro87, Bro89}.  Constant coefficient systems were studied in \cite{Nys06}.
The parabolic Regularity problem with symmetric, time-independent variable coefficients for H\"older continuous coefficients in bounded Lipschitz cylinders has been solved in \cite{CRS} for the $p=2$ case. A similar result for systems (with smooth, symmetric and  time-independent coefficients) is done in \cite{M} in $L^p$ for $p$ close to $2$. 

\subsection{Main Results and Methodology}

Our main result is the following theorem that optimally resolves the range of solvability of parabolic PDE with coefficients satisfying \eqref{E:elliptic} and \eqref{Cond-par}.

At the level of strategy, the proof follows the roadmap of \cite{HKMP2}, which solved the Regularity problem in the elliptic setting: it proceeds through explicit auxiliary constructions, integrations by parts, and a detailed decomposition argument. The main new difficulties arise when we need to establish $L^p$ bounds of certain key operators that now involve the time variable; for some of these the $L^2$ bounds are known thanks to \cite{AEN2}, but the extension to $L^p$ for $p\ne 2$ is non-trivial. To handle these, and motivated by the technique of \cite{DLP}, we estimate the Regularity problem indirectly by pairing against solutions of an adjoint inhomogeneous problem $-\dr_t u -\divg (A^* \nabla u)=\divg\,\vec{h}$ with vanishing Dirichlet data, and then reduce matters to square-function and nontangential maximal-function estimates inherited from the Dirichlet theory for the adjoint operator. This auxiliary mechanism, which is what underlies Section~\ref{Formulation} and the reduction in Section~\ref{SA}, is in the spirit of the Poisson-duality framework developed most fully in the elliptic setting in \cite{MPT} (and used previously in \cite{KP2, DHP}, and in parabolic settings in \cite{DLP} for an explicitly constructed class of test functions).

We now briefly address our assumption that the domain of the boundary value problem under consideration has the form $\Omega = \mathcal O \times \R$, i.e., the domain does not vary in time. In the present setting, the standard approach via a change of variables $(x,x_{n+1},t)\mapsto (x,x_{n+1}-\lambda(x,t),t)$ for some function $\lambda=\lambda(x,t)$ would destroy the structure of our PDE \eqref{E:pde} for the $\partial_t u$ term. 
Likewise, in the paper \cite{DLP} with the imposed conditions \eqref{E:1:carl}-\eqref{E:1:bound}, time-varying domains were not considered, again because the approach via a change of variables changed the structure of the equation. The (different) change of variables that is compatible with the Carleson condition  leads to a new PDE with a first order term (drift). Even in 
the elliptic case, we are not able to handle solvability of the Regularity problem with drift terms. Hence for now only time-independent domains are considered.
\medskip

\begin{theorem}\label{MainT} Assume that the parabolic PDE $\LL u= \dr_t u -\divg (A \nabla u)=0$ is defined on the domain  $\Omega=\mathcal O\times\R$, where $\mathcal O\subset\R^{n+1}$ is an unbounded Lipschitz domain
of the form:
$$\mathcal O=\{(x,x_{n+1}): x\in\R^n\mbox{ and }x_{n+1}>\phi(x)\},\quad\mbox{for some Lipschitz function $\phi$},$$
and that the matrix $A$ is uniformly elliptic \eqref{E:elliptic} with bounded and measurable coefficients. Assume, in addition, that
\begin{equation}\label{Cond-par2}
A(x,x_{n+1},t) = A(x,t) \qquad\mbox{for all }x=(x_1,x_2,\dots,x_n)\in\R^n,\,x_{n+1}>\phi(x)\mbox{ and }t\in\R.
\end{equation}
Then there exists $p_0>1$ such that for all $1<p<p_0$ the $L^p$ Regularity problem for the equation $\LL u=0$ in $\Omega$ is solvable (c.f. Definition \ref{DefRpar}). Moreover, this interval of solvability $(1,p_0)$ is dual to the interval of solvability of the $L^q$ Dirichlet problem (c.f. Definition \ref{DefDir}) for the adjoint equation 
\begin{equation}\label{E:adjpde}
     \LL^*u:=-\dr_tu-\divg(A^*\nabla u)=0
\end{equation} in $\Omega$ which is solvable for all $q>p_0'$.
\end{theorem}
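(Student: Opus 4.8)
The plan is to follow the roadmap of \cite{HKMP2} for the elliptic Regularity problem, recast in the parabolic first‑order functional calculus of \cite{AEN,AEN2,AtEN}, and to borrow the inhomogeneous adjoint‑problem device of \cite{DLP} at the one place where the elliptic template fails: the passage from $L^2$ to $L^p$ for operators carrying the non‑local half‑time derivative $\dhalf$. \emph{Step 1 (flattening and the first‑order system).} Since $\Omega=\mathcal O\times\R$ does not vary in time, the $t$‑independent bi‑Lipschitz change of the spatial variable $(x,x_{n+1},t)\mapsto(x,x_{n+1}-\phi(x),t)$ carries $\Omega$ onto $\reu\times\R$, leaves $\dr_t$ untouched, turns $A=A(x,t)$ into a new elliptic, bounded, measurable matrix that is again independent of the transversal variable, and — crucially — introduces no first‑order (drift) term; so we may assume $\mathcal O=\reu$. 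One then rewrites $\LL u=0$ as the first‑order evolution $\dr_\lambda F=-DB\,F$, with $F$ essentially the conormal gradient $(\dr_{\nu_A}u,\nabla_x u,\dhalf u)$, $D$ a constant‑coefficient self‑adjoint first‑order operator encoding the spatial gradient/divergence together with $\dhalf$, and $B$ bounded and accretive built from $A$ — exactly the set‑up of \cite{AEN2}. By the parabolic Kato theorem of \cite{AEN}, $DB$ is bisectorial on $L^2$ with bounded holomorphic functional calculus; solutions of \eqref{E:pde} with $\widetilde N(\nabla u)\in L^2$ are precisely $\nabla_A u(\cdot,\lambda)=e^{-\lambda|DB|}h$ for $h$ in the positive spectral subspace of $DB$; and on $L^2$ one has $\|\widetilde N(\nabla u)\|_2\approx\|h\|_2\approx$ the conical square function of $\lambda\,\dr_\lambda\nabla u$, together with interior Caccioppoli and reverse‑Hölder estimates.

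\emph{Step 2 ($L^p$ bounds for the boundary layer operators).} This is where the parabolic difficulty concentrates. From \cite{AEN2} one has $L^2$ solvability of the Dirichlet problem for $\LL^*$ and the attendant $L^2$ bounds for the single layer potential $\mathcal S_\lambda$, its conormal derivative, and the square function of $\nabla\mathcal S_\lambda$; \cite{AtEN} upgrades these to $L^2(w)$ for $w$ in the parabolic $A_2$ class, hence by extrapolation to $L^p$ for $p$ in an open interval about $2$. Reaching $p$ near $1$ cannot be done by interpolation alone, since the relevant boundary operator is a $\dhalf$‑type non‑local singular integral; following \cite{DLP} I would instead study the inhomogeneous adjoint problem
\[
-\dr_t v-\divg(A^*\nabla v)=\divg\,\vec h\quad\text{in }\Omega,\qquad v|_{\pom}=0,
\]
prove the a priori estimate $\|\nabla v\|_{L^p(\Omega)}\lesssim\|\vec h\|_{L^p(\Omega)}$ in a suitable range (using the $A_2$‑weighted bounds of \cite{AtEN}, the interior Caccioppoli/reverse‑Hölder estimates, and the half‑time‑derivative machinery of \cite{Din23}), and read off from it the $L^p$ mapping properties — boundedness \emph{and} the lower invertibility bound — of the boundary single layer operator on the homogeneous parabolic Sobolev space $\dot{L}^p_{1,1/2}(\pom)$ of functions with one spatial and a half time derivative in $L^p(\pom)$.

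\emph{Step 3 (from adjoint Dirichlet to Regularity, and the optimal range).} For transversally independent operators the boundary layer operators of $\LL$ (Regularity side) and of $\LL^*$ (Dirichlet side) are adjoint to each other, so the $L^p$‑invertibility of Step 2 is equivalent to solvability of the $L^{p'}$ Dirichlet problem for $\LL^*$; concretely one proves ``$D_{p'}$ for $\LL^*$ solvable $\Longrightarrow$ $R_p$ for $\LL$ solvable''. Given data $f\in\dot{L}^p_{1,1/2}(\pom)$ one produces $u$ via $\mathcal S_\lambda$ composed with the inverse boundary operator (equivalently, by running $e^{-\lambda|DB|}$ on the tangential/half‑time gradient of $f$), and the target estimate $\|\widetilde N(\nabla u)\|_{L^p}+\|\widetilde N(\dhalf u)\|_{L^p}\lesssim\|f\|_{\dot{L}^p_{1,1/2}}$, together with nontangential convergence of $\nabla u$ and $\dhalf u$ to the data, follows from the Step 2 bounds, the $L^p$ version of the square‑function/nontangential‑maximal‑function equivalence of Step 1, and a good‑$\lambda$ (sawtooth/stopping‑time) argument localizing the bad part against a solution of the adjoint Dirichlet problem. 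Since $D_q$ for $\LL^*$ is solvable exactly for $q>p_0'$ by \cite{AEN2}, this yields solvability of the Regularity problem for all $p\in(1,p_0)$, while the one‑directional duality of \cite{DinD} (``$R_p$ for $\LL$ $\Rightarrow$ $D_{p'}$ for $\LL^*$'') shows the range cannot be larger; undoing the flattening of Step 1 returns the result to the Lipschitz domain $\mathcal O=\{x_{n+1}>\phi(x)\}$.

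\emph{The main obstacle.} The heart of the argument, and its point of departure from \cite{HKMP2}, is Step 2: pushing the $L^2$ (and $A_2$‑weighted) bounds for the boundary single layer operator all the way down to $p$ near $1$ in the presence of the non‑local half‑time derivative, and obtaining a genuine invertibility (lower) bound rather than mere boundedness. This is exactly where the inhomogeneous adjoint‑problem technique of \cite{DLP} and the half‑time‑derivative results of \cite{Din23} are needed, and controlling the $\dhalf$ contributions uniformly across scales is the technically heaviest part of the proof.
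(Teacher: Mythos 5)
You correctly identify the two structural pillars of the paper's argument — the duality pairing $\|\widetilde N(\nabla u)\|_{L^p}\lesssim\iint u\,\LL^*v$ where $v$ solves the inhomogeneous adjoint problem $\LL^*v=\divg\vec h$, $v|_{\pom}=0$ (the \cite{DLP} device), and the conclusion ``$D_{p'}$ for $\LL^*$ solvable $\Rightarrow R_p$ for $\LL$ solvable,'' sharpened by the one-directional duality of \cite{DinD}, which yields the optimal interval. The flattening remark is also correct (the paper does it at the end rather than the beginning, but that is cosmetic).

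However, the technical core of your proposal diverges from the paper and, as stated, has genuine gaps. You propose to run the \cite{HKMP2} layer-potential machinery: write $\nabla_A u=e^{-\lambda|DB|}h$, establish $L^p$ boundedness and invertibility of the boundary single layer operator on $\dot L^p_{1,1/2}$, and derive these from a global a priori estimate $\|\nabla v\|_{L^p(\Omega)}\lesssim\|\vec h\|_{L^p(\Omega)}$. None of this is what the paper does, and the a priori estimate you name is both the wrong estimate and unproved: for $v$ solving the inhomogeneous adjoint problem, the paper uses the tent-space/nontangential bounds $\|S(v)\|_{L^{p'}}\lesssim1$ and $\|\widetilde N(v)\|_{L^{p'}}\lesssim1$ tied to $D_{p'}^{\LL^*}$ solvability (from \cite{Up,U}), not a global Calderón–Zygmund-type gradient bound (which is not available for rough coefficients outside a neighborhood of $p=2$). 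Likewise, extrapolation from the $A_2$-weighted bounds of \cite{AtEN} only gives an open interval around $p=2$ and does not by itself reach $p$ near $1$.

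The real engine of the paper — entirely absent from your proposal — is the reduction, via a chain of integrations by parts involving the resolvent family $\mathcal P_\lambda=(I+\lambda^2\mathcal H_\parallel)^{-m}$ and the auxiliary function $V(x,\lambda,t)=-\int_\lambda^\infty v\,ds$, to eleven concrete estimates: area-function bounds for $\partial_\lambda\mathcal P_\lambda f$, $\lambda\mathcal H_\parallel\mathcal P_\lambda f$, $\lambda\nabla_\parallel\partial_\lambda\mathcal P_\lambda f$, etc.; nontangential maximal function bounds for $\partial_\lambda\mathcal P_\lambda f$, $\nabla_\parallel\mathcal P_\lambda f$, etc.; a Carleson measure bound for $\lambda\mathcal P^*_\lambda\partial_j a_{n+1,j}$; and an area bound for the commutator-type quantity $\lambda(\mathcal P^*_\lambda\partial_j(a_{n+1,j}\mathfrak A_\lambda)-(\mathcal P^*_\lambda\partial_j a_{n+1,j})\mathfrak A_\lambda)v$. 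These in turn are proved by $L^2$ estimates from \cite{AEN2}, a Caccioppoli inequality for $I+\lambda^2\mathcal H_\parallel$, Gaussian kernel bounds and off-diagonal estimates, and — for the passage to $p<2$ — real interpolation with the parabolic atomic Hardy–Sobolev space $\dot{\HS}^1_{1,1/2}$ of \cite{DinSa}, not $A_2$ weights. Your description of the ``heart'' of the proof (invertibility of a boundary layer operator near $p=1$) therefore names the wrong obstacle, and does not suggest a route to the Hardy–Sobolev interpolation or the commutator/Carleson estimates that actually carry the argument.
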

We note that the duality between the Regularity problem for $\LL$ in the interval $(1,p_0)$ and the Dirichlet problem for $\LL^*$ in the interval $(p'_0,\infty)$ is sharp, i.e., $p_0$ and $p_0'$ are natural endpoints at which solvability fails; this is a consequence of the papers \cite{DinD, DinN}, where the first paper establishes the direction \lq\lq Regularity for $\LL$ implies Dirichlet for $\LL^*$" and the latter paper the reverse direction. In particular, by analogy with the elliptic theory of \cite{HKMP2, AM}, one cannot in general expect solvability of the Regularity problem beyond the interval $(1,p_0)$.
The other endpoint for the Regularity problem at $p=1$ holds with boundary data in a certain Hardy-Sobolev space $\dot{\HS}^{1}_{1,1/2}(\R^n\times\R)$ which we introduce in Definition \ref{HSato}. This follows again
from the result in \cite{DinN}. 
\medskip

\noindent {\bf Acknowledgement:} We would like to thank Steve Hofmann for some helpful discussions, especially for pointing us to the paper \cite{AtEN}. We also thank the anonymous referee for a careful reading of our paper and for very helpful suggestions to improve its readability.

\section{Formulation of the Regularity problem}\label{Formulation}
\setcounter{equation}{0}

\subsection{Parabolic Sobolev Spaces on {$\partial\Omega$}}%
\label{S:paraSobolev}

The appropriate function spaces for our boundary data should reflect the same homogeneity as the PDE\@.
As a rule of thumb, one derivative in time behaves like two derivatives in space and so the correct order of our time derivative should be $1/2$ when the data is assigned to have one derivative in spatial variables.
This has been studied previously in~\cites{HL96,HL99,Nys06}, who have followed~\cite{FJ68} in defining the homogeneous parabolic Sobolev space $\dot{L}^p_{1,1/2}$ in the following way. 
% chktex 35

\begin{definition}%
	\label{D:paraSob}
	The \textit{homogeneous parabolic Sobolev space} $\dot{L}^p_{1,1/2}(\R^n\times\R)$, for $1 < p < \infty$, is defined to consist of equivalence classes of functions $f$ with distributional derivatives satisfying $\|f\|_{\dot{L}^p_{1,1/2}(\R^n\times\R)} < \infty$, where
	\begin{equation}
		\|f\|_{\dot{L}^p_{1,1/2}(\R^n\times\R)} = \|\D f\|_{L^p(\R^n\times\R)}
	\end{equation}
	and
	\begin{equation}\label{eq2.4}
		(\D f)\,\widehat{\,}\,(\xi,\tau) := \|(\xi,\tau)\| \widehat{f}(\xi,\tau).
	\end{equation}
Here $\|(\xi,\tau)\|$ on $\R^{n} \times \R$ is defined as the unique positive solution $\rho$ to the following equation
\begin{equation}
	\label{E:par-norm}
	\frac{|\xi|^2}{\rho^2} + \frac{\tau^2}{\rho^4} = 1.
\end{equation}
It is the case that $\|(\xi,\tau)\| \sim (|\xi|^2 + |\tau|)^{1/2}$ so that this norm has the correct parabolic scaling. 
\end{definition}

In addition, following~\cite{FR67}, we define a parabolic half-order time derivative by
\begin{equation}
	(\D_{n+1} f)\,\widehat{\,}\,(\xi,\tau) := \frac{\tau}{\|(\xi,\tau) \|} \widehat{f}(\xi,\tau).
\end{equation}

If $0 < \alpha \leq 2$, then for $g \in C^\infty_c(\R)$ the \textit{one-dimensional fractional differentiation operators} $D_\alpha$ are defined by
\begin{equation}
	(D^\alpha g)\,\widehat{\,}\,(\tau) := |\tau|^\alpha \widehat{g}(\tau).
\end{equation}
It is also well known that if $0 < \alpha < 1$ then
\begin{equation}
	D^\alpha g(s) = c\int_{\R} \frac{g(s) - g(\tau)}{|s-\tau|^{1 + \alpha}} d\tau
\end{equation}
whenever $s \in \R$.
If $h(x,t) \in C^\infty_c(\R^n\times\R)$ then by $D^\alpha_t h: \R^n\times\R \rightarrow \R$ we mean the function $D^\alpha h(x, \cdot)$ defined a.e. for each fixed $x \in \R^{n}$.
We now recall the established connections between $\D$, $\D_{n+1}$ and $D_t^{1/2}$. From ~\cites{FR66,FR67, DinD} we have that
\begin{align}\label{eqrn}
	\|\D f\|_{L^p(\R^n\times\R)} \sim  \|\D_{n+1} f\|_{L^p(\R^n\times\R)}+ \|\nabla f\|_{L^p(\R^n\times\R)}\sim  \|D_t^{1/2} f\|_{L^p(\R^n\times\R)}+\|\nabla f\|_{L^p(\R^n\times\R)},
\end{align}
for all $1<p<\infty$. Here, $\nabla$ denotes the usual gradient in the variables $x\in\mathbb R^{n}$.\vglue1mm

When $\Omega=\mathcal O\times\R$, for 
$$\mathcal O=\{X=(x,x_{n+1}): x\in\R^n\mbox{ and }x_{n+1}>\phi(x)\},\quad\mbox{for some Lipschitz function $\phi$},$$
we will need an extension of the definition of spaces $\dot L^p_{1,1/2}$. This is rather trivial however, as we consider the projection $\pi:\partial\Omega\to \R^n\times\R$ defined by
$$(x,\phi(x),t)\mapsto (x,t),$$
and then use pull-back to define $\dot L^p_{1,1/2}(\partial\Omega)$ using the fact that $\dot L^p_{1,1/2}(\R^n\times\R)$ has been defined earlier.

\subsection{Energy solutions}\label{RwEs} We summarize the results of \cite{AEN,DLP} where this concept was extensively discussed. We say that a function $v\in \dot{\E}_{\loc}(\mathcal O\times\mathbb R)$
belongs to the \emph{energy class} $\dot \E(\mathcal O\times\mathbb R)$ if
\begin{align*}
 \|v\|_{\dot \E} := \bigg(\|\nabla v\|_{\L^2(\mathcal O\times\mathbb R)}^2 + \|\HT \dhalf v\|_{\L^2(\mathcal O\times\mathbb R)}^2 \bigg)^{1/2} < \infty.
\end{align*}
Here $H_t$ denotes the Hilbert transform in the $t$-variable; recall that $H_t$ is an isometry on $L^2$ and bounded on $L^p$, $1<p<\infty$. It arises from the decomposition $\partial_t= \dhalf H_t  \dhalf$ and commutes with $ \dhalf$ as both are Fourier multiplier operators.\medskip

When considered modulo constants, $\dot \E $ is a Hilbert space and, in fact, is the closure of $\C_0^\infty\!\big(\,\cl{\mathcal O\times\mathbb R}\,\big)$ for the homogeneous norm $\|\cdot\|_{\dot \E}$.  As shown in \cite{AEN} (with a small generalization), functions from $\dot \E $ have well defined traces with values in
 the \emph{homogeneous parabolic Sobolev space} $\Hdot^{1/4}_{\pd_{t} - \Delta_x}(\partial\mathcal O\times\mathbb R)$.  Conversely, any $g \in \Hdot^{1/4}_{\pd_{t} - \Delta_x}$ can be extended to a function $v \in  \dot \E$ with trace $v\big|_{\partial\mathcal O\times\R} = g$.
  
Hence, by the {\it energy solution} to $\partial_tu - \divg(A\nabla u)=0$ with Dirichlet boundary datum $u\big|_{\partial\mathcal O\times\R} = f \in \Hdot^{1/4}_{\pd_{t} - \Delta_x}$ (understood in the trace sense) we mean $u \in \dot\E$ such that
\begin{align*}
 \iint_{\mathcal O\times\R} \left[A \nabla u \cdot{\nabla v} + \HT \dhalf u \cdot {\dhalf v}\right] \d X \d t = 0,
\end{align*}
holds for all $v \in \dot \E_0$, the subspace of $\dot \E$ with zero boundary trace.

\subsection{$(R)_p$ boundary value problem}

 Let $\Omega=\OO\times \mathbb{R}$, where $\OO\subset\R^{n+1}$ is as before a Lipschitz domain. Assume that $A:\Omega\to M_{(n+1)\times(n+1)}(\mathbb R)$ is a bounded uniformly elliptic matrix-valued function. Let $\LL=\dr_t-\divg A\nabla$.
 \vglue1mm

 To give the definition of the parabolic Regularity problem, we introduce notation for parabolic balls and cubes, and then define nontangential parabolic cones and parabolic non-tangential maximal functions.  
 
 \begin{definition}
A parabolic cube on $\R^{n+1}\times\R$  centered at $(X,t)$ with sidelength $r$ is defined as
$$    Q_r(X,t):=\{ (Y, s) \in \R^{n+1}\times\R : |x_i - y_i| < r \ \text{ for } 1 \leq i \leq n+1, \ | t - s |^{1/2} < r \}.
$$
When writing a lower case point $(x,t)$ we shall mean a boundary parabolic cube on $\R^{n}\times\R$
which has an analogous definition but in one less spatial dimension:
\begin{equation}\label{eqdef.bdypcube}
    Q_r(x,t):=\{ (y, s) \in \R^{n}\times\R : |x_i - y_i| < r \ \text{ for } 1 \leq i \leq n, \ | t - s |^{1/2} < r \}.
\end{equation}
A parabolic ball on $\R^{n+1}\times\R$  centered at $(X,t)$ with radius $r$ is the ball
\begin{equation}\label{eqdef.ball}
    B_r(X,t):=\{ (Y, s) \in \R^{n+1}\times\R : \|(X-Y,t-s)\|<r \},
\end{equation}
where $\|\cdot\|$ has been defined earlier by \eqref{E:par-norm}. Recall that $\|\cdot\|$ scales as the parabolic distance function
\[
d_p((X,t),(Y,s)) := \br{\abs{X-Y}^2+\abs{t-s}}^{1/2}\sim \|(X-Y,t-s)\|.\]

For parabolic balls at the boundary we use notation $\Delta_r(X,t)=B_r(X,t)\cap \pom$. In the special case $\Omega=\R^{n+1}_+\times\R$ we drop the last spatial coordinate and also write $\Delta_r(x,t)$ with understanding that the ball is centered at $(X,t)=(x,0,t)$.
\end{definition}

\begin{definition}
For $a>0$ and $(z,\tau)\in\pom$, unless otherwise defined, we denote the non-tangential parabolic cones by
\begin{equation}\label{Gamma2.11}
    \Gamma_a(z,\tau):=\set{(X,t)\in\om: d_p((X,t),(z,\tau))<(1+a)\delta(X)},
\end{equation}

and $\delta(\cdot)$ is the parabolic distance to the boundary, which under the assumption $\Omega=\mathcal O\times\R$ reduces to the Euclidean distance:
\[
\delta(X)=\inf_{Y\in\partial\mathcal O}\abs{X-Y}.\]
\end{definition}

\begin{definition}\label{def-N}
For $w\in L^\infty_{\loc}(\om)$, we define the non-tangential maximal function of $w$ as
\[
 N_a(w)(z,\tau):=\sup_{(X,t)\in\Gamma_a(z,\tau)}\abs{w(X,t)} \quad\text{for }(z,\tau)\in\pom.
\]
If $w\in L^p_{\loc}(\om)$, $p\in(0,\infty)$, we need the modified non-tangential maximal function
\begin{equation}\label{def.Nap}
    \wt N_{a,p}(w)(z,\tau):=\sup_{(X,t)\in\Gamma_a(z,\tau)}\br{\fiint_{B_{\delta(X)/2}(X,t)}\abs{w(Y,s)}^pdYds}^{1/p} \quad\text{for }(z,\tau)\in\pom,
\end{equation}
We drop the use of the subscript $p$ when $p=2$, that is
\begin{equation}\label{def.N2}
    \wt N_{a}(w):=\wt N_{a,2}(w).
\end{equation}
\end{definition}

It is well-known (using a level-sets argument) that for $p\in(0,\infty)$, the $L^p$ norms of $N_a(w)$, $\wt{N}_a(w)$, are invariant under changes of $a$ up to a constant multiple. For this reason, we  omit the dependence on the aperture $a$ of the cones when there is no need for the specificity. 
\medskip

We define two more objects, the area integral and the square function. 

\begin{definition} For $F\in L^2_{loc}(\Omega)$ and $(z,\tau)\in\partial\Omega$ let
$$\mathcal A(F)(z,\tau)=\left(\iint_{\Gamma(z,\tau)}\frac{|F(Y,s)|^2}{\lambda^{n+3}}dyd\lambda ds\right)^{1/2},$$
and for $v\in W^{1,2}_{loc}(\Omega)$ we set $S(v)(z,\tau)=\mathcal A(\lambda\nabla v)(z,\tau)$.
\end{definition}

We also introduce the Carleson measure expressions which arise
when studying area integral/nontangential maximal function estimates:

\begin{definition} For $F\in L^2_{loc}(\Omega)$ and $(z,\tau)\in\partial\Omega$ let
$$C(F)(z,\tau)=\sup_{\Delta=\Delta(z,\tau)\subset\partial\Omega}\frac1{|\Delta|}\iint_{T(\Delta)}\frac{|F(Y,s)|^2}{\lambda}dyd\lambda ds.$$
Here, as is customary, $T(\Delta)$ is the Carleson region associated with the boundary ball $\Delta$,
which for $\Delta=B\cap\partial\Omega$ can be taken to be $T(\Delta):=B\cap\Omega$.
\end{definition}

 We can now formulate our definition of the parabolic Regularity problem $(R)_p$. 
\begin{definition}\label{DefRpar}
Let $1<p<\infty$.
The Regularity problem for the parabolic operator $\LL$ with boundary data in
$\dot{L}^{p}_{1,1/2}(\partial\Omega)$ is solvable (abbreviated $(R)_{p}$), if
for every $f\in \dot{L}^{p}_{1,1/2}(\partial\Omega)\cap \Hdot^{1/4}_{\partial_t-\Delta_x}(\partial\Omega)$ 
the {\rm energy solution} $u\in  \dot \E(\Omega)$ of $\LL u=0$  to the problem
\begin{align}\label{eq-pp}
\begin{cases}
\LL u&
%=\partial_tu-\mbox{\rm div}(A\nabla u) 
=0 \quad\text{ in } \Omega\\
u|_{\partial \Omega} &= f \quad\text{ on } \partial \Omega
\end{cases}
\end{align}
satisfies
\begin{align}
\label{RPpar} \quad\|\widetilde{N}(\nabla u)\|_{L^p(\partial
\Omega)}\lesssim \|\nabla_T f\|_{L^{p}(\partial\Omega)}+\|D^{1/2}_t f\|_{L^{p}(\partial\Omega)}\approx\| f\|_{\dot{L}_{1,1/2}^{p}(\partial\Omega)}.
\end{align}
The implied constant depends only on the matrix $A$, $p$ and $n$.
\end{definition}

When we need to specify the operator, we will use the abbreviation $(R)_p^{\LL}$.

Observe that this definition does not require control of  $\|\widetilde{N}(D^{1/2}_tu)\|_{L^p}$ or of $\|\widetilde{N}(H_tD^{1/2}_tu)\|_{L^p}$ in 
 \eqref{RPpar} as the following theorem shows that these bounds follow from \eqref{RPpar}.

\begin{theorem}(c.f. \cite{Din23})\label{timp} Let $\Omega$ be an infinite Lipschitz cylinder of the form $\mathcal O\times\mathbb R$, $\LL$ be as above and assume that for some $1<p<\infty$ the Regularity problem $(R)_p$ for $\LL$ on the domain $\Omega$ is solvable. Then in addition to \eqref{RPpar} we also have the bounds
\begin{align}
\label{RPpar2} \quad\|\widetilde{N}(D^{1/2}_t u)\|_{L^p(\partial
\Omega)}+\|\widetilde{N}(H_tD^{1/2}_t u)\|_{L^p(\partial
\Omega)}\lesssim \| f\|_{\dot{L}_{1,1/2}^{p}(\partial\Omega)}.
\end{align}
\end{theorem}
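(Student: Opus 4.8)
The plan is to reduce the estimates on $\wt N(D_t^{1/2}u)$ and $\wt N(H_tD_t^{1/2}u)$ to the already-assumed control of $\wt N(\nabla u)$ together with the interior regularity theory for $\LL$. The key observation is that, by the equation $\dr_t u=\divg(A\nabla u)$ and Caccioppoli-type estimates, the full space-time gradient of $u$ on a Whitney ball is controlled by the $L^2$ average of $\nabla u$ on a slightly larger ball; the subtlety is that $D_t^{1/2}$ is a \emph{nonlocal} operator in $t$, so pointwise interior estimates alone do not immediately bound $\wt N(D_t^{1/2}u)$. To handle the nonlocality I would split $D_t^{1/2}u$ into a ``local'' piece, controlled by the time-derivative of $u$ (hence by $\divg(A\nabla u)$ and interior estimates applied to $\nabla u$), and a ``tail'' piece, which one estimates by commuting $D_t^{1/2}$ past a suitable smooth cutoff and using the already-known bound $\|\wt N(\nabla u)\|_{L^p}\lesssim\|f\|_{\dot L^p_{1,1/2}}$; this is precisely the type of argument carried out in \cite{Din23}, which is why the theorem is attributed there.

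First I would fix a boundary point $(z,\tau)$ and an interior point $(X,t)\in\Gamma(z,\tau)$, and work on the Whitney region $B_{\delta(X,t)/2}(X,t)$. On such a region, the half-time derivative $D_t^{1/2}u$ can be written using the integral formula for $D^\alpha$ with $\alpha=1/2$, splitting the integration in $\tau$ into $|t-\tau|<c\,\delta(X,t)^2$ and $|t-\tau|\ge c\,\delta(X,t)^2$. For the near part, a Taylor expansion of $u$ in $t$ together with the bound $|\dr_t u|=|\divg(A\nabla u)|\lesssim \delta^{-1}\fiint_{B_{2\delta}}|\nabla u|$ (interior estimate, using \eqref{E:pde} and boundedness of $A$) gives control by the averaged gradient, hence by $\wt N(\nabla u)(z,\tau)$ after taking the supremum over the cone. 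For the far part, one writes $u$ as $u=u-f$ plus a harmonic-type extension of $f$, or more directly uses that the trace of $u$ is $f$ and that $D_t^{1/2}$ applied to the boundary data is controlled in $L^p$ by hypothesis; the tail contribution is then majorized by a maximal function of $D_t^{1/2}f$ and a maximal function of $\wt N(\nabla u)$. Taking $L^p(\pom)$ norms and invoking the $L^p$-boundedness of the Hardy--Littlewood maximal operator yields \eqref{RPpar2} for the $D_t^{1/2}u$ term.

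For the term $\wt N(H_tD_t^{1/2}u)$ I would use that $H_t$ is the (parabolic) Hilbert transform in the time variable, which is bounded on $L^p(\R)$ for $1<p<\infty$ and, more importantly, commutes with $D_t^{1/2}$ and with spatial derivatives; combined with the identity $\dhalf(H_t\dhalf u)$-type relations between $\D_{n+1}$, $D_t^{1/2}$ and $H_t$ recalled around \eqref{eqrn}, the nontangential maximal bound for $H_tD_t^{1/2}u$ follows from that for $D_t^{1/2}u$ by essentially the same Whitney-region plus maximal-function argument, now using that $H_t$ acting on a Whitney average is controlled by the near/far decomposition exactly as above. The main obstacle throughout is the nonlocality of $D_t^{1/2}$ and $H_t$: the far/tail contributions must be estimated carefully so that they are genuinely dominated by $\wt N(\nabla u)$ and by $D_t^{1/2}f$ rather than producing an uncontrolled global term; getting the geometry of the cones and the Whitney decomposition to interact correctly with these nonlocal kernels is the delicate point, and it is handled by the machinery of \cite{Din23}, which we invoke.
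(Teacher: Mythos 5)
The paper does not actually prove Theorem~\ref{timp}; the statement is attributed to \cite{Din23} and no argument is given in this text. So there is no in-paper proof to compare against, and you, like the authors, ultimately defer the hard work to \cite{Din23}. That said, a few remarks on the sketch itself.

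Your general strategy --- split $D_t^{1/2}u$ on a Whitney region into a local piece and a nonlocal tail, use the equation $\partial_tu=\divg(A\nabla u)$ together with interior (Caccioppoli) estimates on the local piece, and then take maximal functions --- is the right shape, and you correctly identify the nonlocality of $D_t^{1/2}$ and $H_t$ as the crux. The local (near-time) part and the treatment of $H_t$ via its $L^p$-boundedness and the relations around \eqref{eqrn} are plausible as described.

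The gap is in the far (tail) part, which you assert is ``majorized by a maximal function of $D_t^{1/2}f$ and a maximal function of $\widetilde N(\nabla u)$.'' As written this does not follow: the tail of the fractional-integral representation of $D_t^{1/2}u(X,t)$ at an interior Whitney point $X=(x,\lambda)$ involves the \emph{interior} values $u(X,\tau)$ at the same transverse height $\lambda$ for $|\tau-t|\gg\lambda^2$; those space-time points lie far outside any fixed-aperture cone with vertex near $(z,\tau)$, so neither $f$ nor $\widetilde N(\nabla u)(z,\tau)$ controls them directly. The mention of a ``harmonic-type extension of $f$'' is not the right object for a parabolic operator and would not help. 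The correct device is to write, via the fundamental theorem of calculus in the transverse variable,
\begin{equation*}
u(x,\lambda,\tau)=f(x,\tau)+\int_0^{\lambda}\partial_s u(x,s,\tau)\,ds,
\end{equation*}
so the tail splits into a pure boundary-data term (yielding a maximal function of $D_t^{1/2}f$) and a slab integral of $\partial_\lambda u$ over $\{0<s<\lambda\}$ at far times, which one then organizes into parabolic annuli and dominates by a parabolic maximal function applied to $\widetilde N(\nabla u)$. You do not carry out this decomposition, nor the annular bookkeeping that makes the cone geometry and the tail kernel decay $|t-\tau|^{-3/2}$ interact correctly. Since the theorem is cited rather than proved in the paper, this omission is not fatal to your write-up, but the claim about the far part as stated is not justified and is precisely where the content of \cite{Din23} lives.
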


Theorem \ref{timp} shows that our definition coincides with the previous notions in the literature of solvability of the Regularity problem, as for example in \cite{Bro87, Bro89, Nys06, M, CRS} where one or both of the terms on the left-hand side of \eqref{RPpar2} were incorporated into the statement of the problem. This theorem significantly simplifies our task, since we need 
only to focus on bounds for the spatial gradient of a solution $u$.\vglue1mm

For completeness we also state the definition of the $L^p$ Dirichlet problem $(D)_p$:

\begin{definition}\label{DefDir} Let $p\in (1,\infty)$. %$,\,\Omega,\,A$ and $L$ be as in Definition. \ref{DefRpar}.
We say that the Dirichlet problem for the operator $\LL$ with boundary data in
${L}^{p}(\partial\Omega)$ is solvable (abbreviated $(D)_{p}$), if
for every $f\in {L}^{p}(\partial\Omega)\cap \Hdot^{1/4}_{\partial_t-\Delta_x}(\partial\Omega)$ 
the {\rm energy solution} $u\in  \dot \E(\Omega)$ (as defined above) to the problem \eqref{eq-pp} satisfies the estimate
\begin{align}
\label{RPdir} \quad\|\widetilde{N}( u)\|_{L^p(\partial
\Omega)}\lesssim \| f\|_{L^{p}(\partial\Omega)}.
\end{align}
The implied constant again depends only the matrix $A$, $p$ and $n$.
\end{definition}

\section{Solvability of the Regularity problem - reduction to $11$ key bounds}
\setcounter{equation}{0}

The following section reduces the question of solvability of the Regularity problem for parabolic PDEs with $x_{n+1}$-independent coefficients to the question of boundedness of certain square functions in $L^p$.
 For simplicity we assume that  $\Omega=\R^{n+1}_+\times\R$ in the whole section.
\medskip

We start by adapting useful results from \cite{KP2} to parabolic settings. Let us denote by $\tilde{N}_{1,\varepsilon}$ the $L^1$-averaged version of the non-tangential maximal function for {\it doubly truncated} parabolic cones. That is, for $\vec{u}:{\mathbb R}^{n+1}_+\times\mathbb R\to\mathbb R^m$, we set
$\Gamma^\varepsilon(Y,s):=\Gamma(Y,s)\cap
\{(X,t): \varepsilon < \delta(X)< 1/\varepsilon\}$, 
and
$$\tilde{N}_{1,\varepsilon}(\vec{u})(Y,s)=\sup_{(X,t)\in\Gamma^\varepsilon(Y,s)}\fiint_{(Z,\tau)\in B_{\delta(X)/4}(X,t)}|\vec{u}(Z,\tau)|dZd\tau.$$

Lemma 2.8 of \cite{KP2}, stated below, provides a way to estimate the $L^q$ norms of $\tilde{N}_{1,\varepsilon}(\nabla F)$ via duality (based on tent-spaces). 

\begin{lemma}\label{l1bb} Let $q>1$. There exist $\vec{\alpha}(X,t,Z,\tau)$ with $\vec{\alpha}(X,t,\cdot):B(X,t,\delta(X)/2)\to{\mathbb R}^{n+2}$ and \newline $\|\vec\alpha(X,t,\cdot)\|_{L^\infty(B(X,t,\delta(X)/2))}=1$, a nonnegative scalar function $\beta(X,t,Y,s)\in L^1(\Gamma^\varepsilon(Y,s))$ with \newline $\iint_{\Gamma^\varepsilon(Y,s)}\beta(X,t,Y,s)\,dXdt=1$ and a nonnegative $g\in L^{q'}(\partial{(\mathbb R^{n+1}_+\times\mathbb R)},d\sigma)$ with $\|g\|_{L^{q'}}=1$ such that
\begin{equation}
\left\|\tilde{N}_{1,\varepsilon}(\nabla F)\right\|_{L^q(\partial{(\mathbb R^{n+1}_+\times \mathbb R)},d\sigma)}\lesssim \iint_{{\mathbb R^{n+1}_+\times \mathbb R}}\nabla F(Z,\tau)\cdot \vec{h}(Z,\tau)\, dZd\tau,
\label{e1a}
\end{equation}
where
$$\vec{h}(Z,\tau)=\int_{\partial{(\mathbb R^{n+1}_+\times \mathbb R)}}\iint_{\Gamma(Y,s)}g(Y,s)\vec{\alpha}(X,t,Z,\tau)\beta(X,t,Y,s)\frac{\chi_{B(X,t,\delta(X)/4)}(Z,\tau)}{\delta(X)^{n+2}}\,dX\,dt\,dY\,ds,$$
and $\chi_A$ is the characteristic function of the set $A$.

Moreover, for any $G:{\mathbb R^{n+1}_+\times\mathbb R}\to\mathbb R$ with $\tilde{N}_{1}(\nabla G)\in L^q(\partial({\mathbb R^{n+1}_+}\times\mathbb R))$ we also have an upper bound
\begin{equation}
\left|\iint_{{\mathbb R^{n+1}_+\times\mathbb R}}\nabla G(Z,\tau)\cdot \vec{h}(Z,\tau)\, dZ\,d\tau\right|\lesssim \left\|\tilde{N}_{1}(\nabla G)\right\|_{L^q(\partial({\mathbb R^{n+1}_+}\times\mathbb R),d\sigma)},
\label{e2}
\end{equation}
where $\wt N_1$ is the modified non-tangential maximal function with $p=1$ in \eqref{def.Nap}.
The implied constants in \eqref{e1a}-\eqref{e2} do not depend on $\varepsilon$, only on the dimension $n$.
\end{lemma}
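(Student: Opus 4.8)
The statement is essentially a translation of Lemma 2.8 of \cite{KP2} from the elliptic to the parabolic setting, and I would prove it by following the same duality-via-tent-spaces strategy, keeping careful track of the parabolic scaling. The plan is to begin with the elementary observation that, by the definition of $\tilde N_{1,\varepsilon}(\nabla F)$ as a supremum of $L^1$-averages over the parabolic balls $B_{\delta(X,t)/4}(X,t)$ along the truncated cone $\Gamma^\varepsilon(Y,s)$, one can linearize the sup: for each $(Y,s)\in\partial(\R^{n+1}_+\times\R)$ pick (measurably, up to an $\varepsilon$-loss) a point $(X(Y,s),t(Y,s))\in\Gamma^\varepsilon(Y,s)$ that nearly realizes the supremum, and then pick a unit-$L^\infty$ vector field $\vec\alpha(X,t,\cdot)$ dual to $\nabla F$ on $B_{\delta(X,t)/4}(X,t)$. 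This produces
\[
\tilde N_{1,\varepsilon}(\nabla F)(Y,s)\lesssim \fiint_{B_{\delta(X,t)/4}(X,t)}\nabla F(Z,\tau)\cdot\vec\alpha(X,t,Z,\tau)\,dZ\,d\tau
= \iint \nabla F(Z,\tau)\cdot\vec\alpha(X,t,Z,\tau)\,\frac{\chi_{B_{\delta(X,t)/4}(X,t)}(Z,\tau)}{c\,\delta(X)^{n+2}}\,dZ\,d\tau,
\]
where $\delta(X)^{n+2}$ is the correct normalization because a parabolic ball of radius $r$ in $\R^{n+1}\times\R$ has measure comparable to $r^{n+2}$ (that is, $r^{n+1}$ in space times $r^2$ in time gives $r^{n+3}$, but the relevant averaging region here is $(n+1)$ spatial dimensions and one time dimension with parabolic scaling, so one must double-check the exponent against the measure normalization used in \eqref{def.Nap} and in the area/Carleson functionals — this bookkeeping is the only place the parabolic geometry really enters).

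Next I would dualize in $L^q$: since $\|\tilde N_{1,\varepsilon}(\nabla F)\|_{L^q(\partial)}=\iint_{\partial} \tilde N_{1,\varepsilon}(\nabla F)(Y,s)\,g(Y,s)\,d\sigma(Y,s)$ for some nonnegative $g$ with $\|g\|_{L^{q'}}=1$, and since the point $(X(Y,s),t(Y,s))$ and direction $\vec\alpha$ have been fixed, substituting the linearized bound above and introducing $\beta(X,t,Y,s)$ as the normalized indicator (a nonnegative $L^1(\Gamma^\varepsilon(Y,s))$ density with total mass one, e.g. a point mass or a small smooth bump at $(X(Y,s),t(Y,s))$, so that $\iint_{\Gamma^\varepsilon(Y,s)}\beta\,dX\,dt=1$) lets one write the selected point as an average against $\beta$. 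Interchanging the order of the $dZ\,d\tau$, $dX\,dt$, $dY\,ds$ integrations — which is justified by Tonelli since all the auxiliary functions are nonnegative or bounded — exactly produces the claimed representation
\[
\|\tilde N_{1,\varepsilon}(\nabla F)\|_{L^q(\partial)}\lesssim \iint_{\R^{n+1}_+\times\R}\nabla F(Z,\tau)\cdot\vec h(Z,\tau)\,dZ\,d\tau,\qquad
\vec h(Z,\tau)=\iint\!\iint g\,\vec\alpha\,\beta\,\frac{\chi_{B(X,t,\delta(X)/4)}(Z,\tau)}{\delta(X)^{n+2}}\,dX\,dt\,dY\,ds,
\]
with implied constant depending only on $n$ (the $\varepsilon$-loss in the linearization can be absorbed, or one takes a measurable selection to avoid it entirely). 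For the second inequality \eqref{e2}, the point is that $\vec h$ is itself an average of ``bump" vector fields each supported in a ball $B(X,t,\delta(X)/4)$, so pairing $\nabla G$ against $\vec h$ and using Fubini again reduces to estimating $\fiint_{B(X,t,\delta(X)/4)}|\nabla G|$, which is bounded by $\tilde N_1(\nabla G)(Y,s)$ for every $(Y,s)$ whose cone $\Gamma(Y,s)$ contains $(X,t)$ — and the $\beta$- and $\Gamma(Y,s)$-averages then reconstruct $\iint_\partial \tilde N_1(\nabla G)\,g\,d\sigma\le \|\tilde N_1(\nabla G)\|_{L^q}\|g\|_{L^{q'}}=\|\tilde N_1(\nabla G)\|_{L^q}$ by Hölder, since the ball $B(X,t,\delta(X)/4)$ near $(X,t)\in\Gamma^\varepsilon(Y,s)$ is comparable to $B_{\delta(X,t)/2}$ up to a fixed aperture change (this is where the harmless adjustment from $\delta/4$ to $\delta/2$ and the aperture-independence of $N$-norms noted after \eqref{def.N2} is used).

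The one genuine obstacle — and it is a mild one — is making the measurable selection of $(X(Y,s),t(Y,s))$ and of the direction $\vec\alpha$ clean enough that the resulting $\vec h$ is an honest (measurable, locally integrable) vector field so that the Fubini interchanges are legitimate; in \cite{KP2} this is handled by a standard approximation argument (replace the sup over the cone by a sup over a countable dense set, choose on level sets, pass to the limit), and the same works verbatim here because the parabolic cones $\Gamma^\varepsilon(Y,s)$ and the metric $d_p$ have exactly the same qualitative structure (doubling, Vitali covering) as in the elliptic case. Everything else is a routine translation: replace Euclidean balls by parabolic balls $B_r$, Lebesgue measure by $dZ\,d\tau$ on $\R^{n+1}_+\times\R$, and the exponent $n+1$ by $n+2$ in the volume normalization. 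I would therefore present this as a short lemma whose proof is ``the same as \cite{KP2}*{Lemma 2.8}, replacing the Euclidean metric by the parabolic metric $d_p$ and $|B_r|\sim r^{n+1}$ by $|B_r|\sim r^{n+2}$,'' spelling out only the scaling bookkeeping and the measurable-selection remark.
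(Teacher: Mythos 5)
Your proposal is correct and takes the same route the paper itself takes: the paper does not prove this lemma, but simply states it as the parabolic translation of \cite{KP2}*{Lemma 2.8}, and your outline reproduces exactly what one does to carry that proof over — linearize the supremum in the truncated cone by a measurable selection, dualize the $L^1$-average with a unit vector field $\vec\alpha$, dualize the $L^q$-norm with a nonnegative $g$, encode the selection by a density $\beta$ so as to be able to apply Fubini/Tonelli, and for the converse direction reverse the Fubini to land on $\fiint_B|\nabla G|\le \tilde N_1(\nabla G)(Y,s)$ followed by H\"older, with aperture- and radius-changes absorbed by the standard equivalence of nontangential maximal functions under aperture adjustment. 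You also correctly identify the only place the parabolic geometry enters: the volume normalization of the parabolic ball. On that point, a small warning — the measure of a parabolic ball $B_r(X,t)\subset\R^{n+1}\times\R$ is in fact $\sim r^{n+3}$ (factor $r^{n+1}$ from the $n+1$ spatial directions and $r^2$ from time), so the normalizing power in the kernel of $\vec h$ should be $\delta(X,t)^{n+3}$ rather than the $\delta(X)^{n+2}$ appearing in the lemma statement; since you flag this yourself as "bookkeeping to double-check," you have in effect spotted what appears to be a harmless typographical slip in the paper, and your proof would supply the correct exponent. Beyond that minor detail, there is nothing missing: the measurable-selection remark, the approximation of $\beta$ by a nonnegative $L^1$ bump rather than a literal point mass, and the observation that all of the elliptic arguments transport verbatim once Euclidean balls and distance are replaced by parabolic ones (doubling and Vitali-type covering still hold for $d_p$), are exactly the content the paper leaves implicit.
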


The approach via duality was also the strategy of \cite{DLP}, where a more sophisticated version of the test function $\vec{h}$ is constructed, one that is also differentiable. In particular, \cite{DLP} shows that the extension of the originally elliptic statement in \cite{KP2} to the parabolic setting works as expected: no further modifications are required beyond replacing elliptic cones, balls, and distances with their parabolic counterparts (with the natural inhomogeneous parabolic scaling). In particular, all relevant geometric covering and tent-space arguments transfer verbatim to the parabolic setting. Later, in section 7, we will make use of 
the fact that $\vec{h}(Z,\tau)$ has a tent space bound, and this bound follows exactly as it does in the elliptic setting (Lemma 2.13 of \cite{KP2}). 

\subsection{The adjoint inhomogeneous Dirichlet problem}

 Let $u$ be the solution of the following boundary value problem
\begin{equation}
\mathcal{H}u=\partial_t u-\divg(A\nabla u)=0\mbox{ in }{\mathbb R}^{n+1}_+\times\mathbb R,\qquad u\Big|_{\partial({\mathbb R}^{n+1}_+\times\mathbb R)}=f,\label{e5xb}
\end{equation}
We find $\vec{h}$ as in Lemma \ref{l1bb} for the vector $-\nabla u$, i.e.,  $\|\wt N_{1,\varepsilon}(\nabla u)\|_{L^p}\lesssim -\iint_{{{\mathbb R}^{n+1}_+\times\mathbb R}}\nabla u\cdot\vec{h}\,dZ\,d\tau$.

We let $v:{\mathbb R}^{n+1}_+\times \R\to\mathbb R$ be the solution of the inhomogeneous Dirichlet problem for the operator ${\LL}^*$ (adjoint to $\LL$) which is a backward-in-time parabolic operator:
\begin{equation}
{ \LL}^*v=-\partial_tv-\divg(A^*\nabla v)=\divg(\vec{h})\mbox{ in }{\mathbb R}^{n+1}_+\times\mathbb R,\qquad v\Big|_{\partial({\mathbb R}^{n+1}_+\times\mathbb R)}=0.\label{e3}
\end{equation}
From results of \cite{Up,U} we have the square function and nontangential maximal function estimates for $v$
of the form $\|\tilde{N}(v)\|_{L^{p'}}\lesssim 1$, $\|S(v)\|_{L^{p'}}\lesssim 1$ for all $p'$ for which the Dirichlet problem for the adjoint operator ${ \LL}^*$ is solvable. (By \cite{AEN2}, this is true for all $p'>p_0$,
where $p_0>1$.)

\subsection{Proof of Theorem \ref{MainT} modulo certain bounds for square/nontangential maximal functions}

$\mbox{ }$\medskip

\noindent\textbf{Roadmap of this subsection.} The argument below proceeds in three steps. First, the duality lemma (Lemma~\ref{l1bb}) reduces the $L^p$ bound for $\widetilde{N}_{1,\varepsilon}(\nabla u)$ to a pairing $\iint \nabla u\cdot\vec h$ with a test vector field $\vec h$ supported in the interior. Second, by integrating by parts and using the inhomogeneous adjoint equation $\mathcal H^* v = \divg\,\vec h$, this pairing is rewritten as a boundary integral involving only the boundary data $f$, the test field $\vec h$, and the adjoint solution $v$. Third, a fundamental-theorem-of-calculus trick together with the insertion of the resolvent regularization $\mathcal P_\lambda f$ produces a collection of solid-integral pairings on $\R^{n+1}_+\times\R$. Each of these is then estimated by Cauchy--Schwarz against the square-function and nontangential-maximal-function bounds for $v$ (available because the adjoint Dirichlet problem is solvable in $L^{p'}$), leaving as residue a finite list of estimates for operators built out of $\mathcal P_\lambda$ acting on $f$. These outstanding estimates are collected in \eqref{list} below and proved in Sections~\ref{SA}, \ref{SN}, and~\ref{lemma:CarelsonBound}--\ref{lemma:AreaFctOfDiff}.\medskip

Fix $\varepsilon>0$. Our aim is to estimate $N_{1,\varepsilon}(\nabla u)$  in $L^p$ using the lemma above. Then since $\vec{h}\big|_{\partial({{\mathbb R}^{n+1}_+\times\mathbb R})}=0$ and $\vec{h}$ vanishes as $x_{n+1}\to\infty$, we have by integration by parts
\begin{equation}\label{e6}
\|\wt N_{1,\varepsilon}(\nabla u)\|_{L^p}\lesssim -\iint_{{{\mathbb R}^{n+1}_+\times\mathbb R}}\nabla u\cdot\vec{h}\,dZ\,d\tau
=\iint_{{\mathbb R}^{n+1}_+\times\mathbb R} u\,\divg\,\vec{h}\,dZ\,d\tau=
\iint_{{{\mathbb R}^{n+1}_+\times\mathbb R}}u\,{\LL}^*v\,dZ\,d\tau
\end{equation}
We now move $u$ inside the divergence operator and apply the divergence theorem.
\begin{equation}\nonumber
\mbox{RHS of \eqref{e6}}=-\iint_{{{\mathbb R}^{n+1}_+\times\mathbb R}}\divg(uA^*\nabla v)\,dZ\,d\tau+\iint_{{\mathbb R}^{n+1}_+\times\mathbb R} A\nabla u\cdot\nabla v\,dZ\,d\tau+\iint_{{\mathbb R}^{n+1}_+\times\mathbb R} (\partial_tu)v\,dZ\,d\tau
\end{equation}
$$=
\int_{\partial{{(\mathbb R}^{n+1}_+\times\mathbb R)}}u(\cdot,0,\cdot)a^*_{n+1,j}\partial_jv\,d\sigma,$$
since
$$\iint_{{\mathbb R}^{n+1}_+\times\mathbb R} A\nabla u\cdot\nabla v\,dZ\,d\tau+\iint_{{\mathbb R}^{n+1}_+\times\mathbb R} (\partial_tu)v\,dZ\,d\tau=-\iint_{{\mathbb R}^{n+1}_+\times\mathbb R} \mathcal Hu\,v\,dZ\,d\tau=0.$$
Here there is no boundary integral  involving $v$ since $v$ vanishes on the boundary of ${{\mathbb R}^{n+1}_+\times\mathbb R}$. It follows that
\begin{equation}\label{e8c}
\|N_{1,\varepsilon}(\nabla u)\|_{L^p}\lesssim
\int_{\partial{{(\mathbb R}^{n+1}_+\times\mathbb R)}}u(\cdot,0,\cdot)a^*_{n+1,j}\partial_jv\,d\sigma,
\end{equation}
where the implied constant in \eqref{e8c} is independent of $\varepsilon>0$.

Now, we use the fundamental theorem of calculus and the decay of $\nabla v$ at infinity to write \eqref{e8c} as
\begin{equation}\label{e9}
\|N_{1,\varepsilon}(\nabla u)\|_{L^p}\lesssim-\int_{\partial({{\mathbb R}^{n+1}_+}\times\mathbb R)}u(x,0,t)\left(\int_0^\infty \frac{d}{ds}\left(a^*_{n+1,j}(x,s,t)\partial_jv(x,s,t)\right)ds\right)dx\,dt.
\end{equation}
Recall that $\divg(A^*\nabla v)=-\divg(\vec{h})-\partial_tv$ and hence the righthand side of \eqref{e9} equals 
\begin{equation}\label{e10}
\int_{\partial{{(\mathbb R}^{n+1}_+}\times\mathbb R)}u(x,0,t)\Bigg(\int_0^\infty \Big[\sum_{i<n+1}\partial_i(a^*_{ij}(x,s,t)\partial_jv(x,s,t))+\divg\,\vec{h}(x,s,t)
\end{equation}
\begin{equation}\nonumber
+\partial_tv(x,s,t)\Big]ds\Bigg)dx\,dt.
\end{equation}
We integrate by parts moving $\partial_i$ for $i<n+1$ onto $u(\cdot,0,\cdot)$. 

The integral term containing $\partial_{n+1}h_{n+1}(x,s,t)$ does not need to be considered as it equals zero by the fundamental theorem of calculus since $\vec{h}(\cdot,0,\cdot)=\vec{0}$ and
$\vec{h}(\cdot,s,\cdot)\to\vec{0}$ as $s\to\infty$. 

The notation we will introduce below is: for a vector $\vec{w}=(w_1,w_2,\dots,w_n,w_{n+1})$, the vector $\vec{w}_\parallel$ denotes the first $n$ components of $\vec{w}$, that is $(w_1,w_2,\dots,w_{n})$, and $\nabla_{||}=\nabla_{x}$.
It follows that
\begin{eqnarray}\nonumber
&&\|N_{1,\varepsilon}(\nabla u)\|_{L^p}\\\nonumber&\lesssim& \int_{\partial({{\mathbb R}^{n+1}_+}\times\mathbb R)} \nabla_\parallel f(x,t)\cdot\left(\int_0^\infty \left[-\vec{h}_\parallel(x,s,t)-(A^*\nabla v)_\parallel(x,s,t)\right]ds\right)dx\,dt
\\\nonumber&+& \int_{\partial({{\mathbb R}^{n+1}_+}\times\mathbb R)} f(x,t)\left(\int_0^\infty \partial_tv(x,s,t)\,ds\right)dx\,dt
\\
&=&I+II+III.\label{e11}
\end{eqnarray}
Here $I$ is the term containing $\vec{h}_\parallel$ and $II$ contains $(A^*\nabla v)_\parallel$ and $III$ term $\partial_tv$.

For the term $I$ we observe 
\begin{multline}\label{termI}
I=-\iint_{\R^{n+1}_+\times\R} \nabla_{\parallel}f(x,t)\cdot \vec{h}_\parallel(x,s,t)\,ds\,dx\,dt
\lesssim \|\tilde{N}_1(\nabla_\parallel f)\|_{L^p(\partial({{\mathbb R}^{n+1}_+}\times\mathbb R))}\\
\le \|M(\nabla_\parallel f)\|_{L^p(\partial({{\mathbb R}^{n+1}_+}\times\mathbb R))}\lesssim \|\nabla_\parallel f\|_{L^p(\partial({{\mathbb R}^{n+1}_+}\times\mathbb R))}.
\end{multline}
Here we have used  \eqref{e2} and $L^p$ boundedness of the Hardy-Littlewood maximal function, denoted by $M$.

We now look at the term $II$. We introduce the notation $\lambda=x_{n+1}$ and hence
$\partial_\lambda=\partial_{n+1}$. With this notation, set
\begin{equation}
{V}(x,\lambda,t)=-\int_{\lambda}^\infty v(x,s,t)ds.
\end{equation}
We now introduce $\tilde{u}=\mathcal P_\lambda f$. Here $\mathcal P_\lambda$ is the operator
in variables $(x,t)$ with $\lambda$ fixed defined by $\mathcal P_\lambda=(I+\lambda^2\mathcal H_\parallel)^{-m}$ for sufficiently large $m>0$ to be determined later. Occasionally, we also work with its adjoint
$\mathcal P^*_\lambda=(I+\lambda^2\mathcal H^*_\parallel)^{-m}$. Here, and throughout the paper, the operators
$\mathcal H_\parallel$ and $\mathcal H^*_\parallel$ should be understood to denote operators acting in variables $(x,t)$
defined by
$$\LL_\parallel=\partial_t-\divg_\parallel(A_\parallel\nabla_\parallel\cdot), \qquad
\LL^*_\parallel=-\partial_t-\divg_\parallel(A^*_\parallel\nabla_\parallel\cdot),$$
where $A_\parallel$ is an $n\times n$ matrix valued function consisting of first $n\times n$ block
of the $n+1\times n+1$ matrix $A$. Similarly $\divg_\parallel$, $\nabla_\parallel$ are operators acting in $n$-variables, while the usual $\divg$, $\nabla$ act in $n+1$ variables (i.e., including the variable $x_{n+1}=\lambda$). In particular we note that \((A^*\nabla v)_{||}=A^*_{||}\nabla_{||}v + \sum_{i<n+1} a^*_{i,n+1}\partial_\lambda v\).

Since $\tilde{u}$ equals $f$ on the boundary
of our domain we have using the independence of $A$ on the variable $x_{n+1}=\lambda$:

\begin{multline}
    II=\int_{\partial{{(\mathbb R}^{n+1}_+\times\mathbb R)}}\nabla_\parallel f(x,0,t)\cdot (A^*\nabla {V})_\parallel(x,0,t)dx\,dt\\
 =  \int_{\partial{{(\mathbb R}^{n+1}_+\times\mathbb R)}}\nabla_\parallel f(x,0,t)\cdot A_\parallel^*\nabla_\parallel{V}(x,0,t)dx\,dt
 \\+ \sum_{i<n+1}\int_{\partial{{(\mathbb R}^{n+1}_+\times\mathbb R)}}\partial_i f(x,0,t) a^*_{i,n+1}(x,t)\int_0^\infty \partial_{\lambda}v(x,s,t)ds\,dx\,dt
 \\
    =-\iint_{\R^{n+1}_+\times\R}\dr_\lambda\br{\nabla_{||} \mathcal P_\lambda f(x,\lambda,t)\cdot A_\parallel^*\nabla_\parallel V(x,\lambda,t)}\dr_\lambda(\lambda)dxd\lambda dt\\
    =\iint_{{{\mathbb R}^{n+1}_+\times\mathbb R}}\partial^2_{\lambda}\left[\nabla_\parallel {\mathcal P_\lambda f}(x,\lambda,t)\cdot A_\parallel^*\nabla_\parallel\vec{V}(x,\lambda,t)\right]\lambda\,dx\,d\lambda\,dt.
\end{multline}
The term in the third line vanishes since $v(x,0,t)=0$ and $v(x,s,t)\to 0$ as $s\to\infty$.
Therefore,
\begin{multline*}
II=\iint_{{{\mathbb R}^{n+1}_+\times\mathbb R}}\partial^2_{\lambda}(\nabla_\parallel {\mathcal P_\lambda f})\cdot A^*_\parallel\nabla_\parallel{V}\lambda\,dx\,d\lambda\,dt\\+2\iint_{{{\mathbb R}^{n+1}_+\times\mathbb R}}\partial_{\lambda}(\nabla_\parallel {\mathcal P_\lambda f})\cdot A^*_\parallel\partial_\lambda(\nabla_\parallel{V})\lambda\,dx\,d\lambda\,dt+\\+\iint_{{{\mathbb R}^{n+1}_+}\times\mathbb R}\nabla_\parallel {\mathcal P_\lambda f}\cdot A^*_\parallel\partial^2_{\lambda}(\nabla_\parallel{V})\lambda\,dx\,d\lambda\,dt
=II_1+II_2+II_3.
\end{multline*}
We write term $III$ in a similar way as 
\begin{multline*}
    III=-\int_{\partial{{(\mathbb R}^{n+1}_+\times\mathbb R)}} f(x,t)\partial_t V(x,0,t)dx\,dt\\
    =-\iint_{{{\mathbb R}^{n+1}_+\times\mathbb R}}\partial^2_{\lambda}\left[\mathcal P_\lambda f(x,\lambda,t)\partial_t{V}(x,\lambda,t)\right]\lambda\,dx\,d\lambda\,dt\\
 =-\iint_{{{\mathbb R}^{n+1}_+\times\mathbb R}}(\partial^2_{\lambda} {\mathcal P_\lambda f})\partial_tV\lambda\,dx\,d\lambda\,dt-2\iint_{{{\mathbb R}^{n+1}_+\times\mathbb R}}(\partial_{\lambda}\mathcal P_\lambda f)\partial_\lambda(\partial_t{V})\lambda\,dx\,d\lambda\,dt\nonumber\\-\iint_{{{\mathbb R}^{n+1}_+}\times\mathbb R}{\mathcal P_\lambda f} \partial^2_{\lambda}(\partial_t{V})\lambda\,dx\,d\lambda\,dt
=III_1+III_2+III_3.   
\end{multline*}
Since $ \partial_\lambda{V}(x,\lambda,t)=v(x,\lambda,t)$, by exchanging the order of derivatives and integration by parts we get that
\begin{multline*}
II_3+III_3=\iint_{\R^{n+1}_+\times\R}\left[-\divg_\parallel (A_\parallel\nabla_\parallel \mathcal P_\lambda f)+\partial_t \mathcal P_\lambda f\right](\partial_\lambda v)\lambda\,dx\,d\lambda\,dt=\\
=\iint_{\R^{n+1}_+\times\R}(\mathcal H_\parallel \mathcal P_\lambda f)(\partial_\lambda v)\lambda\,dx\,d\lambda\,dt\le \Vert \mathcal{A}(\lambda\mathcal{H}_{||}\mathcal{P}_{\lambda}f)\Vert_{L^p}\Vert S(v)\Vert_{L^{p'}}\lesssim \Vert \mathcal{A}(\lambda\mathcal{H}_{||}\mathcal{P}_{\lambda}f)\Vert_{L^p},
\end{multline*}
since as we have noted earlier $\Vert S(v)\Vert_{L^{p'}}\lesssim 1$. We shall postpone the task of estimating
$ \|\mathcal{A}(\lambda\mathcal{H}_{||}\mathcal{P}_{\lambda}f)\Vert_{L^p}$ for later (section \ref{SA}) after we collect all required bounds we need to prove for operators associated with $\mathcal{P}_{\lambda}f$.

For the term $II_2$ we have
\begin{multline*}
II_2
\le C\int_{\partial(\R^{n+1}_+\times\R)}\iint_{\Gamma(x,t)}\abs{\partial_{\lambda}(\nabla_\parallel \mathcal P_\lambda f )}\abs{\nabla_\parallel{v}}\lambda^{-n-1}dy\,ds\,d\lambda\,dx\,dt\\
\le C\|\mathcal A(\lambda\nabla_\parallel\partial_\lambda \mathcal{P}_{\lambda}f)\|_{L^p}\|S(v)\|_{L^{p'}}\lesssim \|\mathcal A(\lambda\nabla_\parallel\partial_\lambda \mathcal{P}_{\lambda}f)\|_{L^p},
\end{multline*}
and we note that we shall have to return to the term $\|\mathcal A(\lambda\nabla_\parallel\partial_\lambda \mathcal{P}_{\lambda}f)\|_{L^p}$ later (again in section \ref{SA}).
For the term $III_2$ we use the PDE $v$ satisfies. This gives us

\begin{multline*}
    III_2=-2\iint_{\R^{n+1}_+\times\R} (\lambda\partial_{\lambda}\mathcal{P}_{\lambda}f) \partial_tv dx d\lambda dt
    \\
    =2\iint_{\R^{n+1}_+\times\R} (\lambda\partial_{\lambda}\mathcal{P}_{\lambda}f) \mathrm{div}(A^*\nabla v) dx d\lambda dt +2 \iint_{\R^{n+1}_+\times\R} (\lambda\partial_{\lambda}\mathcal{P}_{\lambda}f) \mathrm{div}(\vec h) dx d\lambda dt
    \\
    =-2\iint_{{\R^{n+1}_+\times\R}}\lambda\nabla_{||}\partial_{\lambda}\mathcal{P}_{\lambda}f\cdot (A^* \nabla v)_\parallel dx d\lambda dt
    \\
    -2 \sum_i\iint_{{\R^{n+1}_+\times\R}} \partial_{\lambda}\big(\lambda\partial_{\lambda}\mathcal{P}_{\lambda}f\big) (a^*_{n+1,i}\partial_i v) dx d\lambda dt
    \\-2\iint_{{\R^{n+1}_+\times\R}} \left[\lambda\nabla_{||}\partial_{\lambda}\mathcal{P}_{\lambda}f\cdot h_{||} + \partial_{\lambda}\big(\lambda\partial_{\lambda}\mathcal{P}_{\lambda}f\big) h_{n+1}\right] dx d\lambda dt.
\end{multline*}    
Here we have integrated by parts (in $\nabla_\parallel$ and $\partial_\lambda$) to move the divergence onto the other term. Note that no boundary terms arise. We can now estimate each term separately, either using square function bounds for terms that contain $v$ or the bound \eqref{e2} for terms that contain components of  $\vec{h}$. This gives us
\begin{multline*}    
    III_2
    \lesssim \Vert \mathcal{A}(\lambda\nabla_{||}\partial_{\lambda}\mathcal{P}_{\lambda}f)\Vert_{L^p}\Vert S(v)\Vert_{L^{p'}} + \Vert \mathcal{A}(\lambda\partial^2_{\lambda}\mathcal{P}_{\lambda}f)\Vert_{L^p}\Vert S(v)\Vert_{L^{p'}}
    \\
   + \Vert \mathcal{A}(\partial_{\lambda}\mathcal{P}_{\lambda}f)\Vert_{L^p}\Vert S(v)\Vert_{L^{p'}} + \Vert \tilde{N}(\lambda\nabla_{||}\partial_{\lambda}\mathcal{P}_{\lambda}f)\Vert_{L^p}
    + \Vert \tilde{N}(\lambda\partial^2_{\lambda}\mathcal{P}_{\lambda}f)\Vert_{L^p} + \Vert \tilde{N}(\partial_{\lambda}\mathcal{P}_{\lambda}f)\Vert_{L^p}\\
    \lesssim \Vert \mathcal{A}(\partial_{\lambda}\mathcal{P}_{\lambda}f)\Vert_{L^p}
   +  \Vert \mathcal{A}(\lambda\nabla_{||}\partial_{\lambda}\mathcal{P}_{\lambda}f)\Vert_{L^p}+ \Vert \mathcal{A}(\lambda\partial^2_{\lambda}\mathcal{P}_{\lambda}f)\Vert_{L^p}\\
   +\Vert \tilde{N}(\lambda\nabla_{||}\partial_{\lambda}\mathcal{P}_{\lambda}f)\Vert_{L^p}+\Vert \tilde{N}(\lambda\partial^2_{\lambda}\mathcal{P}_{\lambda}f)\Vert_{L^p} + \Vert \tilde{N}(\partial_{\lambda}\mathcal{P}_{\lambda}f)\Vert_{L^p}.
\end{multline*}

The new terms to deal with are two square functions $\Vert \mathcal{A}(\partial_{\lambda}\mathcal{P}_{\lambda}f)\Vert_{L^p}$ and $\Vert \mathcal{A}(\lambda\partial^2_{\lambda}\mathcal{P}_{\lambda}f)\Vert_{L^p}$
and three nontangential maximal functions $\Vert \tilde{N}(\partial_{\lambda}\mathcal{P}_{\lambda}f)\Vert_{L^p}$,
$\Vert \tilde{N}(\lambda\nabla_{||}\partial_{\lambda}\mathcal{P}_{\lambda}f)\Vert_{L^p}$, $\Vert \tilde{N}(\lambda\partial^2_{\lambda}\mathcal{P}_{\lambda}f)\Vert_{L^p}$. These area function bounds are estimated in section \ref{SA} and the nontangential bounds in section \ref{SN}.
\medskip

Lastly, we consider $II_1+III_1$ together, moving first $\nabla_\parallel$ in the term $II_1$:
 \begin{multline*} 
    II_1+III_1=-\iint_{\R^{n+1}_+\times\R} (\partial^2_{\lambda}\mathcal{P}_{\lambda}f)\left[\divg_\parallel (A^*_\parallel\nabla_\parallel V)+\partial_tV\right]\lambda dxd\lambda dt\\
 =  \iint_{\R^{n+1}_+\times\R} (\partial^2_{\lambda}\mathcal{P}_{\lambda}f)(\mathcal H_\parallel^*V)\lambda dxd\lambda dt.
 \end{multline*}   
Since we prefer to have the full operator $\mathcal H$ on $V$ we add the missing components in the expression. This gives us the terms    
 \begin{multline*} 
   II_1+III_1  =\iint_{\R^{n+1}_+\times\R} (\partial^2_{\lambda}\mathcal{P}_{\lambda}f)(\mathcal{H}^*V)\lambda dx d\lambda dt - \sum_{j=1}^n\iint_{\R^{n+1}_+\times\R}(\partial^2_{\lambda}\mathcal{P}_{\lambda}f)\partial_j(a^*_{j,n+1}\partial_{\lambda}V)\lambda\, dx d\lambda dt
    \\
     - \sum_{j=1}^{n+1}\iint_{\R^{n+1}_+\times\R}(\partial^2_{\lambda}\mathcal{P}_{\lambda}f)a^*_{n+1,j}(\partial_j\partial_{\lambda}V)\lambda dx d\lambda dt
    :=IV_1+IV_2+IV_3.
\end{multline*}
The last term ($IV_3$) is not problematic at it enjoys a bound by 
\begin{align*}
    IV_3\lesssim \Vert \mathcal{A}(\lambda\partial^2_{\lambda}\mathcal{P}_{\lambda}f)\Vert_{L^p}\Vert S(v)\Vert_{L^{p'}}\lesssim  \Vert \mathcal{A}(\lambda\partial^2_{\lambda}\mathcal{P}_{\lambda}f)\Vert_{L^p},
\end{align*}
which is a term we have already encountered.
For the first term we use the idea of \cite{HKMP2} (p.888). We write the definition of $V$
which involves the integral $-\int_{\lambda}^\infty\dots ds$. We also integrate in the term $IV_1$ in $\lambda$, i.e., 
we have $-\int_0^\infty \int_{\lambda}^\infty\dots ds\,d\lambda$ and now we use Fubini and exchange the order of these two integrals to obtain $-\int_0^\infty \int_{0}^s\dots ,d\lambda\,ds$. This leads to (after swapping the roles of $\lambda$ and $s$):
\begin{align*}
    IV_1&=\iint_{\R^{n+1}_+\times\R} \left[\left(\int_0^{\lambda}s\partial^2_{s}\mathcal{P}_sf ds\right)\divg(A^*\nabla v) + \left(\int_0^{\lambda}s\partial^2_{s}\mathcal{P}_sf ds\right)\partial_tv\right] \,dx \,d\lambda \,dt.
\end{align*}  
Recall that $v$ solves the inhomogeneous PDE \eqref{e3}. Using it and then moving divergence onto the other term we obtain
\begin{multline*}
    IV_1=\iint_{\R^{n+1}_+\times\R} h\cdot\nabla\left(\int_0^{\lambda}s\partial^2_{s}\mathcal{P}_sf ds\right) \,dx \,d\lambda \,dt
    \\
    =\iint_{\R^{n+1}_+\times\R} \left[h_{||}\cdot\nabla_{||}\big(\lambda\partial_{\lambda}\mathcal{P}_{\lambda}f-\mathcal{P}_{\lambda}f+f\big)  + h_{n+1} \lambda\partial_{\lambda}^2\mathcal{P}_{\lambda}f\right]\,dx \,d\lambda \,dt.
\end{multline*}    
 We may now again use \eqref{e2} to bound each term. It follows that
 \begin{multline*}
 IV_1   \lesssim \Vert \tilde{N}(\lambda\nabla_{||}\partial_{\lambda}\mathcal{P}_{\lambda}f)\Vert_{L^p}+\Vert \tilde{N}(\nabla_{||}\mathcal{P}_{\lambda}f)\Vert_{L^p}+\Vert\tilde{N}(\lambda\partial^2_{\lambda}\mathcal{P}_{\lambda}f)\Vert_{L^p} + \|\tilde N(\nabla_\parallel f)\|_{L^p}.
\end{multline*}
The only new term is $\Vert \tilde{N}(\nabla_{||}\mathcal{P}_{\lambda}f)\Vert_{L^p}$, which we add to our list of required bounds to be estimated later in section \ref{SN}. 

It remains to consider  term $IV_2$. Let  $\mathfrak{A}_\lambda$ be the following averaging operator.
\begin{equation}\label{def-frak}
\mathfrak{A}_\lambda f(x,t)=\displaystyle\fint_{Q_\lambda(x,t)} f(y,s)\,dyds.
\end{equation}
Here $Q_\lambda(x,t)$ is a boundary parabolic cube centered at $(x,t)$ of size $\lambda$.
Occasionally, we apply the operator $\mathfrak{A}_\lambda$ to functions that depend on $\lambda$ as well,
and by $\mathfrak{A}_\lambda f(x,\lambda,t)$ we understand $\mathfrak{A}_\lambda f_\lambda(x,t)$,
where $f_\lambda=f(\cdot,\lambda,\cdot)$. This averaging operator will even out oscillation of the function $v$
to which it will be applied. Essentially, thanks to it we will be able to treat $\mathfrak{A}_\lambda v$
as being essentially constant on sets of size $\lambda$, which will be a key step in proving the commutator estimate listed as 4) in \eqref{list}. Simultaneously, we shall prove good bounds on $\Vert \mathcal{A}((I-\mathfrak{A}_{\lambda})v)\Vert_{L^{p'}}$ and a trivial bound $\Vert N(\mathfrak{A}_{\lambda}v)\Vert_{L^{p'}}\lesssim \Vert N(v)\Vert_{L^{p'}}$ which allows us to insert this operator into the calculation below.
For the  term $IV_2$ we have the following:
\begin{multline*}
    IV_2=-\sum_{j=1}^n\iint_{\R^{n+1}_+\times\R}(\partial^2_\lambda\mathcal P_\lambda f)\partial_j(a_{n+1,j}v)\lambda dx d\lambda dt\\
    =-\sum_{j=1}^n\iint_{\R^{n+1}_+\times\R}(\partial^2_\lambda\mathcal P_\lambda f)\partial_j(a_{n+1,j}(I-\mathfrak{A}_\lambda)v)\lambda dx d\lambda dt
    \\-\sum_{j=1}^n\iint_{\R^{n+1}_+\times\R}(\partial^2_\lambda\mathcal P_\lambda f)\partial_j(a_{n+1,j}\mathfrak{A}_\lambda v)\lambda dx d\lambda dt
    \\
    = \sum_{j=1}^n\iint_{\R^{n+1}_+\times\R}
    (\partial_j\partial^2_{\lambda}\mathcal{P}_{\lambda}f)(a_{n+1,j }(I-\mathfrak{A}_{\lambda})v)\lambda dx d\lambda dt \\- \iint_{\R^{n+1}_+\times\R}(\partial^2_{\lambda}\mathcal{P}_{\lambda}f)\partial_j(a_{n+1,j}\mathfrak{A}_{\lambda}v)\lambda dx d\lambda dt
       =:V_1+V_2.
\end{multline*}

By H\"older's inequality  we obtain
\begin{align*}
    V_1\lesssim \Vert \mathcal{A}(\lambda^2\nabla_{||}\partial^2_{\lambda}\mathcal{P}_{\lambda}f)\Vert_{L^p}\Vert \mathcal{A}((I-\mathfrak{A}_{\lambda})v)\Vert_{L^{p'}}\lesssim \Vert \mathcal{A}(\lambda^2\nabla_{||}\partial^2_{\lambda}\mathcal{P}_{\lambda}f)\Vert_{L^p},
\end{align*}
since the second term is bounded by $O(1)$ thanks to Corollary \ref{cor-mathfrak} and the first term will be considered in section \ref{SA}.
\medskip

For $V_2$ we first observe the following. The operator $\partial^2_{\lambda}\mathcal{P}_{\lambda}$
can be written as $\tilde{\mathcal{P}}_\lambda\circ \mathcal{Q}_\lambda$, where $\tilde{\mathcal{P}}_\lambda$
is of the same type as the original operator ${\mathcal{P}}_\lambda$ (but with a different power of the resolvent, which does not change its properties). The second operator 
\(\mathcal{Q}_\lambda\) is a linear combination of operators $\partial_\lambda\mathcal{P}_{\lambda}$ for different $m$ multiplied by the factor \(\frac{1}{\lambda}\) as follows from the first line of \eqref{P-lambda22}. Therefore any square function bound we have for  \(\partial_\lambda\mathcal{P}_{\lambda}\)  will also hold for 
 \(\lambda^{-1}\mathcal{Q}_\lambda\)  as long as we choose a  sufficiently large $m$ initially.
To see this we now explicitly track the dependence of $\mathcal P_\lambda$
on $m$, that is we write $\mathcal P_\lambda=\mathcal P_{\lambda,m}=(I+\lambda^2\mathcal H_\parallel)^{-m}$. Then we have:
\begin{multline}\label{P-lambda22} \partial^2_\lambda \mathcal{P}_{\lambda,m}=
\frac{2m}\lambda\left[\partial_\lambda\mathcal P_{\lambda,m+1}-\partial_\lambda\mathcal P_{\lambda,m}\right]=\\
\frac{4m}{\lambda^2}\left[(m+1)(\mathcal P_{\lambda,m+2}-\mathcal P_{\lambda,m+1})-m(\mathcal P_{\lambda,m+1}-\mathcal P_{\lambda,m})\right]\\
=(I+\lambda^2\mathcal H_\parallel)^{-k}\circ\frac{4m}{\lambda^2}\left[(m+1)(\mathcal P_{\lambda,m-k+2}-\mathcal P_{\lambda,m-k+1})-m(\mathcal P_{\lambda,m-k+1}-\mathcal P_{\lambda,m-k})\right]\\
=\mathcal{P}_{\lambda,{k}}\circ\frac{2m}{\lambda}\left[\frac{m+1}{m+1-k}\partial_\lambda\mathcal P_{\lambda,m-k-1}-\frac{m}{m-k}\partial_\lambda\mathcal P_{\lambda,m-k-2}\right]
:=\tilde{\mathcal{P}}_\lambda\circ \mathcal{Q}_\lambda.
\end{multline}
So indeed, the claim holds, provided we choose an integer $k\sim m/2$. We now move  $\tilde{\mathcal{P}}_\lambda$ onto $\partial_j(a_{n+1,j}\mathfrak{A}_{\lambda}v)$ in $V_2$ to gain more regularity for this term
since at the moment it only makes sense as a distribution.
It follows that for $V_2$ we have:
\begin{multline*}
    V_2=-\sum_{j=1}^{n}\iint_{\R^{n+1}_+\times\R}(\tilde{\mathcal{P}}_\lambda\circ \mathcal{Q}_\lambda f)\partial_j(a_{n+1,j}\mathfrak{A}_{\lambda}v)\lambda dx d\lambda dt
    \\
    = -\sum_{j=1}^n\iint_{\R^{n+1}_+\times\R}(\mathcal{Q}_{\lambda}f)(\tilde{\mathcal{P}}^*_{\lambda}\partial_j(a_{n+1,j}\mathfrak{A}_{\lambda}v))\lambda dx d\lambda dt
    \\
    =-\sum_{j=1}^n\iint_{\R^{n+1}_+\times\R}(\mathcal{Q}_{\lambda}f)\Big(\tilde{\mathcal{P}}^*_{\lambda}\partial_j(a_{n+1,j}\mathfrak{A}_{\lambda}v) - (\tilde{\mathcal{P}}^*_{\lambda}\partial_ja_{n+1,j})(\mathfrak{A}_{\lambda}v)\Big)\lambda dx d\lambda dt
    \\
     - \iint_{\R^{n+1}_+\times\R}(\mathcal{Q}_{\lambda}f)(\tilde{\mathcal{P}}^*_{\lambda}\partial_ja_{n+1,j})(\mathfrak{A}_\lambda v) \lambda \,dx d\lambda dt =:VI_1+VI_2.
\end{multline*}

For the first term we have by Cauchy-Schwarz:
\begin{align*}
    VI_1&\lesssim  \Vert\mathcal{A}(\lambda\mathcal{Q}_{\lambda}f)\Vert_{L^p}\Vert \mathcal{A}\Big(\lambda\big(\tilde{\mathcal{P}}^*_{\lambda}\partial_j(a_{n+1,j}\mathfrak{A}_{\lambda}) - (\tilde{\mathcal{P}}^*_{\lambda}\partial_j a_{n+1,j})\mathfrak{A}_{\lambda}\big)v\Big)\Vert_{L^{p'}}.
\end{align*}
As we have said earlier, whatever estimate we establish for $\Vert\mathcal{A}(\partial_\lambda\mathcal P_\lambda f)\Vert_{L^p}$ will also hold for $\Vert\mathcal{A}(\lambda\mathcal{Q}_{\lambda}f)\Vert_{L^p}$ and hence we don't have to add this term to our list as the term $\Vert\mathcal{A}(\partial_\lambda\mathcal P_\lambda f)\Vert_{L^p}$ is already on it. The second term is new and we will seek an estimate for it of by $O(1)$.

Lastly, we have
\begin{align*}
    VI_2&\lesssim \sum_{j=1}^n\Vert \mathcal{A}(\lambda\mathcal{Q}_{\lambda}f)\Vert_{L^p}\Vert C(\lambda\tilde{\mathcal{P}}^*_{\lambda}\partial_ja_{n+1,j})\Vert_{L^\infty}\Vert N(\mathfrak{A}_{\lambda}v)\Vert_{L^{p'}}.
\end{align*}
We can observe that by definition of the averaging operator \(\Vert N(\mathfrak{A}_{\lambda}v)\Vert_{L^{p'}}\lesssim \Vert {N}(v)\Vert_{L^{p'}}\) which is bounded by $O(1)$. Again the term $\Vert \mathcal{A}(\lambda\mathcal{Q}_{\lambda}f)\Vert_{L^p}$ has appeared before. Hence, the only outstanding term is
$\Vert C(\lambda\tilde{\mathcal{P}}^*_{\lambda}\partial_ja_{n+1,j})\Vert_{L^\infty}$.

\subsection{Summary: the list of outstanding estimates}
In summary, let us assume that for some $p>1$ we can prove the following bounds for an arbitrary function $f\in\dot{L}^p_{1,1/2}(\R^{n+1}_+\times\R)$ and the adjoint solution $v$ to \eqref{e3}:
\begin{align}
1)&\nonumber \qquad \Vert \mathcal{A}(\partial_{\lambda}\mathcal{P}_{\lambda}f)\Vert_{L^p}+\|\mathcal{A}(\lambda\mathcal{H}_{||}\mathcal{P}_{\lambda}f)\Vert_{L^p}+\|\mathcal A(\lambda\nabla_\parallel\partial_\lambda \mathcal{P}_{\lambda}f)\|_{L^p}\\
&\qquad\qquad\qquad\qquad+\Vert \mathcal{A}(\lambda\partial^2_{\lambda}\mathcal{P}_{\lambda}f)\Vert_{L^p}+\Vert \mathcal{A}(\lambda^2\nabla_{||}\partial^2_{\lambda}\mathcal{P}_{\lambda}f)\Vert_{L^p}\lesssim \|f\|_{\dot{L}^p_{1,1/2}},\nonumber\\
&\,\quad\qquad\qquad\qquad\qquad\qquad\qquad\qquad\qquad\mbox{to be established in section \ref{SA},}\nonumber\\
2)&\qquad\nonumber \Vert {N}(\partial_{\lambda}\mathcal{P}_{\lambda}f)\Vert_{L^p}+
\Vert \tilde{N}(\lambda\nabla_{||}\partial_{\lambda}\mathcal{P}_{\lambda}f)\Vert_{L^p}\nonumber\\
&\qquad\qquad\qquad\qquad\qquad\qquad+\Vert {N}(\lambda\partial^2_{\lambda}\mathcal{P}_{\lambda}f)\Vert_{L^p}+\Vert \tilde{N}(\nabla_{||}\mathcal{P}_{\lambda}f)\Vert_{L^p}\lesssim \|f\|_{\dot{L}^p_{1,1/2}}\label{list},\\
&\,\quad\qquad\qquad\qquad\qquad\qquad\qquad\qquad\qquad\mbox{to be established in section \ref{SN},}\nonumber\\
3)&\qquad\nonumber \Vert C(\lambda\tilde{\mathcal{P}}^*_{\lambda}\partial_ja_{n+1,j})\Vert_{L^\infty}\lesssim 1,\qquad\qquad\mbox{proven in Lemma \ref{lemma:CarelsonBound},}\\
4)&\qquad\nonumber \Vert \mathcal{A}\Big(\lambda\big(\tilde{\mathcal{P}}^*_{\lambda}\partial_j(a_{n+1,j}\mathfrak{A}_{\lambda}) - (\tilde{\mathcal{P}}^*_{\lambda}\partial_j a_{n+1,j})\mathfrak{A}_{\lambda}\big)v\Big)\Vert_{L^{p'}}\lesssim 1,\\
&\,\quad\qquad\qquad\qquad\qquad\qquad\qquad\qquad\qquad\mbox{proven as Corollary \ref{c712}.}\nonumber
\end{align}

Then for any $\varepsilon>0$ the estimate 
$$\|\wt N_{1,\varepsilon}(\nabla u)\|_{L^p}\le C\|f\|_{\dot{L}^p_{1,1/2}},$$
holds for the solution of the Regularity problem with boundary datum $f$.

An argument is required to demonstrate that the control of $\tilde{N}_{1,\varepsilon}(\nabla u)$ of a solution $\LL u=0$ implies the control of $\tilde{N}(\nabla u)$ (the $L^2$ averaged version of the non-tangential maximal function as defined in \eqref{def.N2}). Firstly, as the established estimates are independent of $\varepsilon>0$ we obtain
$$\|\tilde{N}_{1}(\nabla u)\|_{L^p}=\lim_{\varepsilon\to 0+}\|\tilde{N}_{1,\varepsilon}(\nabla u)\|_{L^p}\le C(\|\nabla_\parallel f\|_{L^p}+\|D^{1/2}_t f\|_{L^p}).$$
Secondly, 
% as $\nabla u$ satisfies a reverse H\"older self-improvement inequality as a consequence of Caccioppoli's inequality for parabolic PDEs
% $$\left(\fiint_B|\nabla u|^{2+\delta}\right)^{1/(2+\delta)}\lesssim \left(\fiint_{2B}|\nabla u|^{2}\right)^{1/2},$$
% for some $\delta>0$ depending on ellipticity constant and all $B$ such that $3B\subset{{\mathbb R}^n_+}$, it also follows (c.f. \cite[Theorem 2.4]{S2}) that
by the higher integrability of the gradient (i.e. Gehring's lemma) and purely real variable arguments (such as \cite{She} of Shen), we have that
$$\left(\fiint_B|\nabla u|^2\right)^{1/2}\lesssim \left(\fiint_{2B}|\nabla u|\right),$$
which implies a bound of $\tilde{N}(\nabla u)(\cdot)$ defined using cones $\Gamma_a(\cdot)$ of some aperture $a>0$ by $\tilde{N}_1(\nabla u)(\cdot)$ defined using cones $\Gamma_b(\cdot)$ of some slightly larger aperture $b>a$.
\medskip

Hence, if we establish \eqref{list} the claim of Theorem \ref{MainT} follows on the domain $\Omega=\R^{n+1}_+\times \R$.

The claim for the domain of the form $\Omega=\mathcal O\times\R$, where $\mathcal O\subset\R^{n+1}$ is an unbounded Lipschitz domain
$$\mathcal O=\{(x,x_{n+1}): x\in\R^n\mbox{ and }x_{n+1}>\phi(x)\},\quad\mbox{for some Lipschitz function $\phi$},$$
follows too. We just need to consider a change of variables 
$(x,\lambda,t)\mapsto( x,\lambda-\phi(x),t)$
which transforms the domain from $\mathcal O\times\R$ to the previously considered case $\R^{n+1}_+\times \R$. This transformation preserves the property of $\lambda=x_{n+1}$-independence and ellipticity and hence we have reduced the problem to the case we have already resolved.

\section{Area functions estimates}\label{SA}

The main goal of the section is to establish the bounds stated in 1) of \eqref{list}. Again, the underlying space is $\Omega=\R^{n+1}_+\times\R$ and hence $\partial\Omega=\R^n\times\R$.
The notation we use throughout this section is as follows:
\begin{enumerate}
\item  $\mathcal{P}_{\lambda}$ denotes $(I+\lambda^2\mathcal H_\parallel)^{-m}$ for any  sufficiently large $m>0$. Recall that $\mathcal H_\parallel=\partial_t-\divg_\parallel (A_\parallel\nabla_\parallel\cdot)$.
\item When the dependence of $\mathcal{P}_{\lambda}$ on $m$ matters, we shall use the previously introduced notation  $\mathcal P_{\lambda,m}$.
\end{enumerate}

\subsection{The area function $\mathcal A(\partial_\lambda\mathcal P_\lambda f)$}\label{s5.1}

We recall the result in \cite{AEN2} (Lemma 3.2 and its proof) which immediately gives that
\begin{equation}\label{area-l2}
\Vert \mathcal{A}(\partial_{\lambda}\mathcal{P}_{\lambda}f)\Vert_{L^2}\lesssim \|f\|_{\dot{L}^2_{1,1/2}}=\|\nabla_\parallel f\|_{L^2}+\|D^{1/2}_t f\|_{L^2}.
\end{equation}

We aim to extend the range for which this holds to $1<p\le 2$ by interpolation. Let us recall results of the paper \cite{DinN} which introduces the following Hardy-Sobolev space. In the definition below the variables are $(x,t)$ with $x\in\R^n$ and $t\in\R$. This Hardy-Sobolev space serves as the natural endpoint at $p=1$ for the Regularity problem, in place of the space $\dot L^1_{1,1/2}(\R^n\times\R)$, which fails to be a suitable endpoint precisely because its atoms do not satisfy the cancellation condition needed for the kernel-based off-diagonal estimates. As in the classical Fefferman--Stein theory, the Hardy-space atoms supply the orthogonality (vanishing moment) properties that allow Calder\'on--Zygmund-type operators to be controlled in $L^1$, whereas elements of $\dot L^1_{1,1/2}$ in general do not. Real interpolation between this $p=1$ Hardy--Sobolev endpoint and the $L^q$ scale (Proposition~\ref{IHS2} below) then recovers the full range of $\dot L^p_{1,1/2}$ spaces, which is exactly what is needed for our argument.

\begin{definition}\label{HSato} Let $1<\beta<\infty$. We say that a function $b:\R^n\times\R\to\R$ is a homogeneous $(1,1/2,\beta)$-atom associated to a parabolic ball $\Delta\subset\R^n\times\R$ of radius $r=r(\Delta)$ if 
\begin{itemize}
\item[(i)] $b$ is supported in the ball $\Delta$,
\item[(ii)] $\|\nabla b\|_{L^\beta(\R^n\times\R)}+\|D^{1/2}_t b\|_{L^\beta(\R^n\times\R)}\le |\Delta|^{-1/\beta'}$,
\item[(iii)] $\|b\|_{L^1(\R^n\times\R)}\le r(\Delta)$.
\end{itemize}
Here $|\Delta|$ denotes the usual $n+1$-dimensional Lebesgue measure on $\R^n\times\R$.
When $p=\infty$ we modify (ii) and replace it by $\|\nabla b\|_{L^\infty(\R^n\times\R)}+\|D^{1/2}_t b\|_{\BMO(\R^n\times\R)}\le |B|^{-1}$, where $\BMO$ is the parabolic BMO space.
\vglue2mm

We say that a locally integrable function $f$ belongs to the homogeneous parabolic atomic Hardy-Sobolev space
$\dot{\HS}^{1,(\beta)}_{1,1/2}(\R^n\times\R)$ if there exists a family of locally integrable homogeneous $(1,1/2,\beta)$-atoms $(b_i)_i$ such that $f=\sum_i\lambda_ib_i$, with $\sum_i|\lambda_i|<\infty$. We equip the space 
$\dot{\HS}^{1,(\beta)}_{1,1/2}(\R^n\times\R)$ with the norm:
$$\|f\|_{\dot{\HS}^{1,(\beta)}_{1,1/2}}=\inf_{(\lambda_i)_i}\sum_i|\lambda_i|.$$
\end{definition}

As shown in \cite{DinN}, for all $1<\beta\le\infty$, the spaces 
$\dot{\HS}^{1,(\beta)}_{1,1/2}(\R^n\times\R)$ are the same and the norms are equivalent. Hence we can drop the dependence on $\beta$. Furthermore, the space $\dot{\HS}^{1}_{1,1/2}(\R^n\times\R)$ interpolates well 
with the spaces  ${\dot{L}^p_{1,1/2}(\R^n\times\R)}$ since we have the following (by Proposition 6.56 of \cite{DinN}):

\begin{proposition} \label{IHS2} For all $1<p<q<\infty$ and $\theta\in (0,1)$ satisfying $\frac1p=(1-\theta)+\frac\theta{q}$,  $\dot{L}^{p}_{1,1/2}(\R^n\times\R)$ is a real interpolation space between $\dot{\HS}^{1}_{1,1/2}(\R^n\times\R)$ and $\dot{L}^{q}_{1,1/2}(\R^n\times\R)$. More precisely, we have
$$ \left(\dot{\HS}^{1}_{1,1/2}(\R^n\times\R),\dot{L}^{q}_{1,1/2}(\R^n\times\R)\right)_{\theta,p} = \dot{L}^{p}_{1,1/2}(\R^n\times\R),$$
with equivalent norms. 
\end{proposition}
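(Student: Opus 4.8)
\emph{Proposed approach.} The plan is to reduce the statement to the classical real-interpolation identity between a Hardy space and an $L^q$ space. By definition the parabolic first-order operator $\D$ (the Fourier multiplier $\|(\xi,\tau)\|$, cf.\ \eqref{eq2.4}) is a topological isomorphism of $\dot{L}^p_{1,1/2}(\R^n\times\R)$ onto $L^p(\R^n\times\R)$ for every $1<p<\infty$. So the crucial point to establish is that $\D$ is \emph{also} a topological isomorphism of $\dot{\HS}^{1}_{1,1/2}(\R^n\times\R)$ onto the parabolic Hardy space $H^1_{\mathrm{par}}(\R^n\times\R)$ --- the Coifman--Weiss $H^1$ of $\R^n\times\R$ regarded as a space of homogeneous type under the parabolic quasi-distance $d_p$ and the dilations $(x,t)\mapsto(\rho x,\rho^2 t)$, with atoms supported in parabolic balls, of $L^\beta$-size $\le |Q_r|^{-1/\beta'}$ and with vanishing integral. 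Granting this, since a bounded invertible map between Banach couples commutes with the real interpolation functor (it preserves the $K$-functional up to a fixed multiplicative constant), one obtains
\[
\big(\dot{\HS}^{1}_{1,1/2},\,\dot{L}^q_{1,1/2}\big)_{\theta,p}\;\cong\;\D^{-1}\big(H^1_{\mathrm{par}},\,L^q\big)_{\theta,p}\;=\;\D^{-1}\big(L^p\big)\;=\;\dot{L}^p_{1,1/2},
\]
the middle identity being the Fefferman--Rivi\`ere--Sagher theorem $\big(H^1,L^q\big)_{\theta,p}=L^p$ for $1<p<q<\infty$, $\tfrac1p=(1-\theta)+\tfrac{\theta}{q}$, in its anisotropic (parabolic) form; the latter follows from the isotropic version because $(\R^n\times\R,d_p)$ carries the required dilation structure, or from the couple $(H^1,L^\infty)$ via the Calder\'on--Mityagin property and reiteration.

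\emph{Establishing the isomorphism.} For the forward bound I would argue atom by atom. Given a homogeneous $(1,1/2,\beta)$-atom $b$ on a parabolic ball $\Delta$ of radius $r$, the goal is to write $\D b=\sum_k c_k a_k$ with each $a_k$ a genuine $H^1_{\mathrm{par}}$-$\beta$-molecule and $\sum_k|c_k|\le C$ independent of $b$. The vanishing of $\widehat{\D b}$ at the origin supplies the cancellation condition; the restriction of $\D b$ to $2\Delta$ (minus its small average) is $O(1)$ times an $H^1$-atom by H\"older's inequality and property (ii); and the tail $\1_{(2\Delta)^c}\D b$ is split over parabolic annuli $2^{k+1}\Delta\setminus 2^k\Delta$, on which the size and smoothness of the convolution kernel of $\D$, combined with property (iii) $\|b\|_{L^1}\le r$, force the $k$-th piece to have mass $\lesssim 2^{-k}|2^k\Delta|^{-1/\beta'}$ --- i.e.\ to be a $C2^{-k}$-multiple of a molecule. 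Since $\sum_k C2^{-k}<\infty$, molecular $H^1$-theory over spaces of homogeneous type yields $\|\D f\|_{H^1_{\mathrm{par}}}\lesssim\|f\|_{\dot{\HS}^1_{1,1/2}}$ after summing an atomic decomposition of $f$. For the inverse bound, $\D^{-1}$ is convolution with the multiplier $\|(\xi,\tau)\|^{-1}$, a parabolic Riesz potential of order $1$ whose kernel obeys $|K(x,t)|\lesssim d_p((x,t),0)^{-(n+1)}$ with matching H\"older estimates (the parabolic homogeneous dimension of $\R^n\times\R$ being $n+2$); the standard calculus of such operators sends each $H^1_{\mathrm{par}}$-atom to a fixed multiple of a homogeneous $(1,1/2,\beta)$-atom --- conditions (i), (ii), (iii) of Definition \ref{HSato} being precisely compact support up to molecular tails, the $L^\beta$-normalization, and the $\dot{L}^1$-normalization --- so $\|\D^{-1}g\|_{\dot{\HS}^1_{1,1/2}}\lesssim\|g\|_{H^1_{\mathrm{par}}}$, completing the isomorphism.

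\emph{Main obstacle.} The technical heart is exactly the molecular bookkeeping above: re-expanding the non-compactly-supported functions $\D b$ and $\D^{-1}g$ into atoms with a uniformly summable coefficient sequence. This is where condition (iii) of Definition \ref{HSato} --- the $\dot{L}^1$-size bound $\|b\|_{L^1}\le r(\Delta)$ --- is indispensable: it converts the $O(2^{-k})$ kernel decay over parabolic annuli into a convergent geometric series, and without it the tail of $\D b$ need not belong to $H^1$. A secondary, routine point is checking that the Fefferman--Rivi\`ere--Sagher identity survives the passage to the parabolic dilation structure. An alternative route that bypasses $H^1_{\mathrm{par}}$ altogether --- prove $\dot{\HS}^1_{1,1/2}\hookrightarrow\dot{L}^1_{1,1/2}$ (which still needs the same tail estimate) to get one inclusion from $(L^1,L^q)_{\theta,p}=L^p$, and run a Calder\'on--Zygmund decomposition at the Sobolev-space level to estimate $K(s,f;\dot{\HS}^1_{1,1/2},\dot{L}^q_{1,1/2})$ for the reverse inclusion --- leads to the same core difficulty and is probably no shorter.
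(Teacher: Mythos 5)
The paper offers no proof of Proposition \ref{IHS2}: it is quoted verbatim as Proposition~6.56 of the cited reference \cite{DinSa}, so there is no in-paper argument to compare your proposal against. Judged on its own terms, your plan is a standard and essentially sound route to the result: transport the problem by the multiplier $\D$ to the pair $(H^1_{\mathrm{par}},L^q)$, invoke the parabolic analogue of the $(H^1,L^q)_{\theta,p}=L^p$ identity, and conclude by functoriality of the real method under a simultaneous isomorphism of the couple. The heart of the matter, as you say, is establishing $\D:\dot{\HS}^1_{1,1/2}\leftrightarrow H^1_{\mathrm{par}}$, and your molecular bookkeeping is quantitatively correct: the kernel of $\D$ is homogeneous of parabolic degree $-(Q+1)$ with $Q=n+2$, so the $k$-th annular piece of $\D b$ has $L^\beta$-mass $\lesssim r\,(2^kr)^{-1-Q/\beta'}\sim 2^{-k}\,|2^k\Delta|^{-1/\beta'}$, precisely the coefficient needed; and $\widehat{\D b}(0)=0$ supplies the total cancellation, which (via the usual telescoping of the peeled-off averages) distributes over the pieces.

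Two points you should be aware of before treating this as finished. First, condition~(iii) of Definition \ref{HSato} is not an independent hypothesis: with $b$ compactly supported in $\Delta$ and $\|\D b\|_{L^\beta}\le|\Delta|^{-1/\beta'}$, the parabolic Sobolev/Poincar\'e inequality on the ball $\Delta$ (homogeneous dimension $Q=n+2$) already forces $\|b\|_{L^1}\lesssim r(\Delta)$; so the $\|b\|_{L^1}\le r(\Delta)$ bound you lean on for the tail decay is a \emph{consequence} of (i)--(ii), not an extra constraint baked into the atom. Your argument is unaffected, but the phrase ``indispensable'' overstates the role of (iii). Second, and more substantively, the inverse direction is the delicate half. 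Writing $\D^{-1}a=\sum_k c_k b_k$ with $b_k$ supported on $2^{k+1}\Delta$ requires smooth cutoffs $\psi_k$, and then one must control $\|D_t^{1/2}(\psi_k\cdot \D^{-1}a)\|_{L^\beta(\R^n\times\R)}$ \emph{globally}, since condition~(ii) is stated over the whole of $\R^n\times\R$. Because $D_t^{1/2}$ is nonlocal, $D_t^{1/2}(\psi_k g)\ne\psi_k D_t^{1/2}g$ and the commutator $[D_t^{1/2},\psi_k]$ has a nontrivial long-range tail; this needs a fractional Leibniz/commutator estimate adapted to the parabolic homogeneity, and is the step where the scheme could genuinely fail if mishandled. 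Your proposal treats this as ``standard calculus,'' but it is the one piece of the argument that cannot be waved through and deserves to be written out. Aside from that, the outline is coherent.
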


It follows that if we prove that for any $(1,1/2,2)$-atom $b$ we have the following estimate
\begin{equation}\label{eq-atom}
\|\mathcal{A}(\partial_{\lambda}\mathcal{P}_{\lambda}b)\Vert_{L^1}\le C,
\end{equation}
for $C>0$ independent of $b$, then by real interpolation (applied to the sublinear functional 
$f\mapsto \mathcal{A}(\partial_{\lambda}\mathcal{P}_{\lambda}f))$ we can conclude that for all $1<p\le 2$ we have the $L^p$ bounds:
\begin{equation}\label{area-lp}
\Vert \mathcal{A}(\partial_{\lambda}\mathcal{P}_{\lambda}f)\Vert_{L^p}\lesssim \|f\|_{\dot{L}^p_{1,1/2}}=\|\nabla_\parallel f\|_{L^p}+\|D^{1/2}_t f\|_{L^p}.
\end{equation}

To establish \eqref{eq-atom}, consider any $(1,1/2,2)$-atom $b$. Assume it is supported on a boundary ball $\Delta$. We then have, using \eqref{area-l2}, Cauchy--Schwarz, and doubling,

\begin{multline}
\|\mathcal{A}(\partial_{\lambda}\mathcal{P}_{\lambda}b)\Vert_{L^1(8\Delta)}\le \left(\int_{8\Delta}1^2\,dxdt\right)^{1/2}\left(\int_{8\Delta}|\mathcal{A}(\partial_{\lambda}\mathcal{P}_{\lambda}b)|^2\,dxdt\right)^{1/2}
\\\le C|\Delta|^{1/2}\|\mathcal{A}(\partial_{\lambda}\mathcal{P}_{\lambda}b)\Vert_{L^2(\R^n\times \R)}\le C|\Delta|^{1/2}
\left(\|\nabla b\|_{L^2(\R^n\times\R)}+\|D^{1/2}_t b\|_{L^2(\R^n\times\R)}\right)\\\le C|\Delta|^{1/2}|\Delta|^{-1/2}\le C.
\end{multline}
In the penultimate step we have used the property (ii) of the $b$-atom. Hence, all that remains is to prove $L^1$ bounds away from the support of the atom, i.e., $\|\mathcal{A}(\partial_{\lambda}\mathcal{P}_{\lambda}b)\Vert_{L^1((\R^n\times\R)\setminus 8\Delta)}$. For this we use the following kernel bounds from \cite{AEN2} (Lemma 4.4).
\begin{lemma}\label{lemma:KernelBounds}
    The integral kernel of \(\mathcal P_{\lambda,m}=(I+\lambda^2\mathcal{H}_{||})^{-m}\) satisfies
    \begin{align}K_{\lambda,m}(x,t,y,s)\lesssim \frac{C\chi_{(0,\infty)}(t-s)}{\lambda^{2m}}(t-s)^{-\frac{n}{2}+m-1}e^{-\frac{t-s}{\lambda^2}}e^{-c\frac{|x-y|^2}{t-s}}\label{eq:kernelBounds}.
    \end{align}
\end{lemma}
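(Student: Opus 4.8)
This bound is \cite{AEN2} (Lemma~4.4); we recall the argument, which proceeds by reducing the estimate to Aronson's classical Gaussian bound for the fundamental solution of the $n$-dimensional parabolic operator $\mathcal H_\parallel=\partial_t-\divg_\parallel(A_\parallel\nabla_\parallel\,\cdot\,)$, via a subordination formula that writes the resolvent power $\mathcal P_{\lambda,m}=(I+\lambda^2\mathcal H_\parallel)^{-m}$ as an average of the parabolic evolution. Writing $L_\parallel=-\divg_\parallel(A_\parallel\nabla_\parallel\,\cdot\,)$, the operator $\mathcal H_\parallel=\partial_t+L_\parallel$ is accretive on $L^2(\R^n\times\R)$ — the factor $\partial_t$ is skew-adjoint and $L_\parallel$ is accretive by the ellipticity \eqref{E:elliptic} — and it is maximal accretive since the inhomogeneous problem $(I+\mathcal H_\parallel)g=F$ is uniquely solvable. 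By Lumer--Phillips, $-\mathcal H_\parallel$ generates a contraction semigroup $\{e^{-\rho\mathcal H_\parallel}\}_{\rho\ge0}$, so that for integer $m$
\[
\mathcal P_{\lambda,m}=\frac{1}{(m-1)!}\int_0^\infty\rho^{m-1}e^{-\rho}\,e^{-\rho\lambda^2\mathcal H_\parallel}\,d\rho .
\]

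The next step is to identify $e^{-\rho\lambda^2\mathcal H_\parallel}$ explicitly. Solving $\partial_\rho v=-\lambda^2(\partial_t+L_\parallel)v$ with $v|_{\rho=0}=g$ gives $(e^{-\rho\lambda^2\mathcal H_\parallel}g)(x,t)=\int_{\R^n}\Gamma(x,t,y,t-\rho\lambda^2)\,g(y,t-\rho\lambda^2)\,dy$, where $\Gamma(x,t,y,s)$, defined for $t>s$, is the fundamental solution of $\partial_t-\divg_\parallel(A_\parallel\nabla_\parallel\,\cdot\,)$ (equivalently, the integral kernel of the propagator of the non-autonomous heat equation with coefficient $A_\parallel$). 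Inserting this into the subordination formula and substituting $s=t-\rho\lambda^2$ (so $\rho=(t-s)/\lambda^2$, $d\rho=-\lambda^{-2}\,ds$) exhibits $\mathcal P_{\lambda,m}$ as an honest integral operator in $(y,s)$ with kernel
\[
K_{\lambda,m}(x,t,y,s)=\frac{\chi_{(0,\infty)}(t-s)}{(m-1)!\,\lambda^{2m}}\,(t-s)^{m-1}\,e^{-(t-s)/\lambda^2}\,\Gamma(x,t,y,s).
\]

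Finally I would invoke Aronson's upper bound $\Gamma(x,t,y,s)\lesssim (t-s)^{-n/2}e^{-c|x-y|^2/(t-s)}$ for $t>s$, valid for any uniformly elliptic divergence-form parabolic operator with bounded measurable coefficients (here the coefficients are $x_{n+1}$-independent but possibly time-dependent, which is permitted); substituting it into the last display yields exactly \eqref{eq:kernelBounds}, with implied constant depending only on $m$, $n$, and the ellipticity constants in \eqref{E:elliptic}. The only genuinely delicate point is the first step: unlike an elliptic resolvent, $\mathcal H_\parallel$ is not sectorial — the numerical range of $\partial_t$ is the whole imaginary axis, so $\{e^{-\rho\mathcal H_\parallel}\}$ is not analytic — hence the semigroup and the subordination identity must be justified through Lumer--Phillips rather than sectoriality, and $\Gamma$ must be understood in the sense of Aronson/Fabes--Stroock for parabolic equations with rough coefficients. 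Once this is in place, the remainder is a routine change of variables.
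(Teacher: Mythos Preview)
Your proposal is correct and, in fact, the paper gives no proof of this lemma at all: it merely records the bound as Lemma~4.4 of \cite{AEN2}. Your sketch of the argument — justifying the contraction semigroup $e^{-\rho\mathcal H_\parallel}$ via Lumer--Phillips, identifying it with the non-autonomous parabolic propagator, rewriting the resolvent power via the Gamma-function subordination identity, changing variables $s=t-\rho\lambda^2$, and finally invoking Aronson's Gaussian upper bound — is exactly the route taken in \cite{AEN2}, and your computation of the kernel formula and the final substitution are accurate.
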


As a corollary we obtain the following. Let $\tilde{K}_{\lambda,m}(x,t,y,s)$ be the integral kernel of the operator
\(\partial_\lambda\mathcal P_{\lambda,m}\). Then by \eqref{P-lambda} it enjoys the bounds:
    \begin{align}\left|\tilde{K}_{\lambda,m}(x,t,y,s)\right|\le \frac{C}{\lambda}\left[K_{\lambda,m+1}(x,t,y,s)+K_{\lambda,m}(x,t,y,s)\right].
    \end{align}
Hence any bounds established for an operator via bounds for the kernel $\lambda^{-1}K_{\lambda,m}(x,t,y,s)$
and for $m>0$ large will also apply to \(\partial_\lambda\mathcal P_{\lambda,m}\), which is simply a sum of two such operators. Without loss of generality, in what follows below will be working with $\lambda^{-1}K_{\lambda,m}(x,t,y,s)$.\vglue1mm

Let us decompose the set $(\R^n\times\R)\setminus 8\Delta$ and write it as a union of dyadic annuli, where $r=r(\Delta)$ denotes the radius of $\Delta$. All balls below are centered at the center of $\Delta$.

$$(\R^n\times\R)\setminus 8\Delta=\bigcup_{j\ge 4}\left(\Delta_{2^jr}\setminus \Delta_{2^{j-1}r}\right).
$$
Fix a point $(z,\tau)\in \Delta_{2^jr}\setminus \Delta_{2^{j-1}r}$ and consider the corresponding parabolic cone
$\Gamma_a(z,\tau)$. We split the cone further and consider the `near' and `away' part of the cone
\begin{align*}
\Gamma^{n}_a(z,\tau)&=\{(X,t)\in \Gamma_a(z,\tau): x_{n+1}<\beta2^j r\},\\
\Gamma^{a}_a(z,\tau)&=\{(X,t)\in \Gamma_a(z,\tau): x_{n+1}\ge \beta2^j r\}.
\end{align*}
Here $\beta=\beta(a)>0$ depends on the aperture $a$ of the cone and is chosen so that for any
$(X,t)=(x,\lambda,t)\in \Gamma^{n}_a(z,\tau)$, the parabolic distance between $(x,t)$ and $\Delta$ is at least  $2^{j-2}r$. It follows that for any $(X,t)\in \Gamma^{n}_a(z,\tau)$ we have
\begin{multline}\label{eq:pointwisepartialX0P}
(\lambda^{-1}\mathcal P_{\lambda,m}b)(x,\lambda,t)=
\int_{\R^n\times\R}\lambda^{-1}{K}_{\lambda}(x,t,y,s) b(y,s) dyds
        \\
        \leq\int_{\partial\Omega} \frac{C\chi_{(0,\infty)}(t-s)}{\lambda^{2m+1}}(t-s)^{-\frac{n}{2}+m-1}e^{-\frac{t-s}{\lambda^2}}e^{-c\frac{|x-y|^2}{t-s}} b(y,s) dyds
        \\
        =\int_{\Delta} \chi_{|t-s|\geq 2^{2j-5}r^2}\frac{C\chi_{(0,\infty)}(t-s)}{\lambda^{2m+1}}(t-s)^{-\frac{n}{2}+m-1}e^{-\frac{t-s}{\lambda^2}}e^{-c\frac{|x-y|^2}{t-s}} b(y,s) dyds\nonumber
        \\
        +\int_{\Delta} \chi_{|t-s|\leq 2^{2j-5}r^2}\frac{C\chi_{(0,\infty)}(t-s)}{\lambda^{2m+1}}(t-s)^{-\frac{n}{2}+m-1}e^{-\frac{t-s}{\lambda^2}}e^{-c\frac{|x-y|^2}{t-s}} b(y,s) dyds\nonumber
        \\
        :=J_1+J_2,\nonumber
\end{multline}

    where we are distinguishing two cases to simplify the computation.
    
    First for \(J_1\), since \(|t-s|\geq 2^{2j-5}r^2\), we can bound \(e^{-\frac{|x-y|^2}{t-s}}\lesssim 1\), and we have
    \[ \frac{C\chi_{(0,\infty)}(t-s)}{\lambda^{2m+1}}(t-s)^{-\frac{n}{2}+m-1}e^{-\frac{t-s}{\lambda^2}}e^{-c\frac{|x-y|^2}{t-s}}\lesssim \frac{C}{\lambda^{2m+1}}(2^{2j}r^2)^{-\frac{n}{2}+m-1}e^{-c\frac{2^{2j}r^2}{\lambda^2}}. \]
    
 Observe that the function \(\lambda\mapsto \lambda^{-4m-3}e^{-c\frac{2^{2j}r^2}{\lambda^2}}\) is maximized for \(\lambda=\sqrt{\frac{2c}{4m+3}}2^{j}r\). This yields for $(z,\tau)\in \Delta_{2^jr}\setminus \Delta_{2^{j-1}r}$
    \begin{align*}
        \iint_{\Gamma^n(z,\tau)}\frac{|J_1|^2}{\lambda^{n+3}}dx d\lambda dt&\lesssim \int_0^{\beta 2^j r}\frac{C}{\lambda^{4m+3}}(2^jr)^{-2n+4m-4}e^{-c\frac{2^{2j}r^2}{\lambda^2}}\Vert b\Vert_{L^1}^2d\lambda
        \\
        &\lesssim \Vert b\Vert_{L^1}^2\int_0^{\beta2^jr} (2^jr)^{-2n-7}d\lambda\lesssim 2^{-2(n+3)j}r^{-2(n+3)}\Vert b\Vert_{L^1}^2.
    \end{align*}
    We have estimated the function inside the integral by its supremum.  Here the implicit constant depends on $m$ but we suppress it as we are proving the bound for any large $m$.\vglue1mm
    
    Now for \(J_2\), if \(|t-s|\leq 2^{2j-5}r^2\) then using geometric considerations we have that  \(|x-y|\geq 2^{j-3}r\). We again observe that \(\lambda\mapsto \lambda^{-4m-3}e^{-\frac{t-s}{\lambda^2}}\) is maximized for \(\lambda=\sqrt{\frac{t-s}{4m+3}}\) and
    \(s\mapsto s^{-n-7/2}e^{-c\frac{2^{2j}r^2}{s}}\) is maximized for \(s=\frac{c}{n+7/2}2^{2j}r^2\). Using these observations, we have for $(z,\tau)\in \Delta_{2^jr}\setminus \Delta_{2^{j-1}r}$ 
    \begin{align*}
        &\iint_{\Gamma^{n}(z,\tau)}\frac{|J_2|^2}{\lambda^{n+3}}dx d\lambda dt
        \\
        &\lesssim \int_0^{\beta2^jr}\Big(\fint_{\Delta_{ca\lambda}(y,s)}\chi_{|x-y|\geq 2^{j-3}r}\frac{1}{\lambda^{4m+3}}(t-s)^{-n+2m-2}e^{-\frac{t-s}{\lambda^2}}e^{-c\frac{|x-y|^2}{t-s}}dxdt\Big) \Vert b\Vert_{L^1}^2 d\lambda
        \\
        &\lesssim \Vert b\Vert_{L^1}^2\int_0^{\beta2^jr}\Big(\fint_{\Delta_{ca\lambda}(z,\tau)}\frac{1}{(t-s)^{n+7/2}}e^{-c\frac{2^{2j}r^2}{t-s}}dxdt\Big)d\lambda
        \\
         &\lesssim \Vert b\Vert_{L^1}^2\int_0^{\beta 2^jr} (2^jr)^{-2n-7}d\lambda\lesssim 2^{-2(n+3)j}r^{-2(n+3)}\Vert b\Vert_{L^1}^2.
    \end{align*}
Here we have first optimized the second integral in $\lambda$ by replacing it with the supremum and then have found the supremum of the third integral in the time variable as well. \vglue1mm

For points $(X,t)\in \Gamma^{a}(z,\tau)$ we have $\lambda=x_{n+1}$ large, at least $\beta2^jr$. 
The second exponential in \eqref{eq:kernelBounds} is bounded by $1$. Optimising 
\eqref{eq:kernelBounds} in the time variable we see that \(\tau\mapsto \tau^{-\frac{n}{2}+m-1}e^{-\frac{\tau}{\lambda^2}}\) is maximized by \(\tau=(-\frac{n}{2}+m-1)\lambda^2\) if \(-\frac{n}{2}+m-1\geq 1\). But since we choose a large \(m\), this can be guaranteed. It then follows that
    \begin{multline*}
        \iint_{\Gamma^a(z,\tau)}\frac{|\lambda^{-1}\mathcal{P}_{\lambda}b|^2}{\lambda^{n+3}}dx d\lambda dt\\=\iint_{\Gamma^a(z,\tau)}\frac{\Vert b\Vert_{L^1}^2}{\lambda^{3n+9}}dx d\lambda dt
        \lesssim \int_{\beta2^jr}^\infty\frac{\Vert b\Vert_{L^1}^2}{\lambda^{2n+7}}dx d\lambda dt\lesssim 2^{-2(n+3)j}r^{-2(n+3)}\Vert b\Vert_{L^1}^2.
    \end{multline*}

    Hence we obtain in total (after adding contributions from $J_1$ and $J_2$):
    \begin{multline*}
        \Vert \mathcal{A}(\lambda^{-1}\mathcal{P}_{\lambda}b)\Vert_{L^1(\partial\Omega\setminus 8\Delta)}\lesssim \sum_{j\geq 4}\Vert \mathcal{A}(\lambda^{-1}\mathcal{P}_{\lambda}b)\Vert_{L^1(\Delta_{2^jr}\setminus \Delta_{2^{j-1}r})}
        \\
        \lesssim\sum_{j}(2^{j+1}r)^{n+2}2^{-(n+3)j}r^{-(n+3)}\Vert b\Vert_{L^1}
        \lesssim \frac{1}{r}\Vert b\Vert_{L^1}\lesssim C.
    \end{multline*}   
The last claim follows from the property (iii) of the $b$-atom. Hence indeed \eqref{eq-atom} holds.

\subsection{The area function $A(\lambda\nabla_\parallel\partial_\lambda \mathcal{P}_{\lambda}f)$}\label{s5.2}

The following lemma establishes a Caccioppoli inequality for the operator $(I+\lambda^2\mathcal H_\parallel)^{-1}$.

\begin{lemma}\label{L5.10} Fix any $\lambda>0$. Assume that $u:\R^n\times\R\to\R$ is a solution of the following parabolic PDE:
$$\partial_tu-\divg_x(A_\parallel(x,\lambda,t)\nabla_xu)+\frac{u}{\lambda^2}=\frac{g}{\lambda^2},$$
for $(x,t)\in \R^n\times\R$. That is, $u=(I+\lambda^2\mathcal H_\parallel)^{-1}g$.
Then for any parabolic ball $\Delta_\lambda=\Delta_\lambda(x,t)$ of radius $\lambda$ we have that
\begin{equation}\label{eq5.2}
\iint_{\Delta_\lambda} |\nabla_x u|^2 dx dt\lesssim \frac1{\lambda^2}\left[\iint_{2\Delta_\lambda} |u|^2dx dt+\iint_{2\Delta_\lambda}|g|^2dx dt\right].
\end{equation}
\end{lemma}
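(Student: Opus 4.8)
The plan is to prove \eqref{eq5.2} by the standard Cacciopoli/energy argument for parabolic equations, adapted to the zeroth-order reaction term $u/\lambda^2$. First I would fix a smooth cutoff function $\eta$ that equals $1$ on $\Delta_\lambda$, is supported in $2\Delta_\lambda$, and satisfies the parabolic-scaling bounds $|\nabla_x\eta|\lesssim\lambda^{-1}$ and $|\partial_t\eta|\lesssim\lambda^{-2}$ (this is why the ball has radius exactly $\lambda$: the scaling of the cutoff is matched to the scaling of the reaction term). Then I would test the equation $\partial_t u - \divg_x(A_\parallel\nabla_x u) + u/\lambda^2 = g/\lambda^2$ against $\eta^2 u$ and integrate over $2\Delta_\lambda$ (or over a space-time slab, to handle the time derivative cleanly; since the problem is posed on all of $\R^n\times\R$ there is no spatial boundary term, and the $\partial_t$ term integrates to zero against $\eta^2 u$ after moving half the $t$-derivative onto $\eta^2$, i.e. $\iint \partial_t u\,\eta^2 u = -\tfrac12\iint u^2\,\partial_t(\eta^2)$).

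The main computation then reads, schematically,
\begin{align*}
\iint \eta^2 A_\parallel\nabla_x u\cdot\nabla_x u\,dx\,dt + \frac1{\lambda^2}\iint\eta^2 u^2\,dx\,dt
&= -2\iint \eta\, u\, A_\parallel\nabla_x u\cdot\nabla_x\eta\,dx\,dt \\
&\quad + \frac12\iint u^2\,\partial_t(\eta^2)\,dx\,dt + \frac1{\lambda^2}\iint\eta^2 u g\,dx\,dt.
\end{align*}
Using the ellipticity lower bound \eqref{E:elliptic} on the left and the Cauchy–Schwarz / Young inequality $2\eta|u||A_\parallel\nabla_x u||\nabla_x\eta|\le \tfrac{\lambda}{2}\eta^2|\nabla_x u|^2 + C\Lambda^2\lambda^{-1}u^2|\nabla_x\eta|^2$ — wait, the correct absorption is $\le \tfrac\lambda2\to$ rather, $\le \tfrac12(\text{coercive term})$ — one absorbs $\tfrac12\iint\eta^2|\nabla_x u|^2$ into the left-hand side. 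The remaining error terms are controlled by $\lambda^{-2}\iint_{2\Delta_\lambda}u^2$ (coming from $|\nabla_x\eta|^2\lesssim\lambda^{-2}$ and from $|\partial_t(\eta^2)|\lesssim\lambda^{-2}$) plus $\lambda^{-2}\iint_{2\Delta_\lambda}g^2$ (from a second Young inequality applied to $\lambda^{-2}\eta^2 u g$, absorbing the $u^2$ piece into the coercive zeroth-order term $\lambda^{-2}\iint\eta^2 u^2$ on the left). Discarding the nonnegative $\lambda^{-2}\iint\eta^2 u^2$ term on the left and restricting the gradient integral to $\Delta_\lambda$ where $\eta\equiv 1$ yields precisely \eqref{eq5.2}.

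The one genuine subtlety — and the step I expect to need the most care — is the justification of the integration by parts in $t$ and the legitimacy of $\eta^2 u$ as a test function: $u = (I+\lambda^2\mathcal H_\parallel)^{-1}g$ lives in the natural energy space for $\mathcal H_\parallel$ on $\R^n\times\R$, so $\nabla_x u\in L^2$ and $u\in L^2$ globally (with $g\in L^2$), and the half-time-derivative bound from the resolvent gives enough regularity that $\partial_t u$ can be paired with $\eta^2 u$ in the duality sense; the compact support of $\eta$ removes any issue at spatial infinity, and the identity $\langle \partial_t u,\eta^2 u\rangle = -\tfrac12\iint u^2\partial_t(\eta^2)$ holds by the standard density/mollification argument (approximate $u$ by smooth functions, or use that $t\mapsto \|u(\cdot,t)\|_{L^2_{loc}}$ is absolutely continuous). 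Everything else is the routine Cacciopoli bookkeeping sketched above, with all constants depending only on $\lambda$-independent quantities $n,\lambda_{\mathrm{ell}},\Lambda$ — the reaction term, far from being an obstruction, actually helps by supplying the extra coercive $\lambda^{-2}\|u\|^2$ that absorbs the $u g$ cross term.
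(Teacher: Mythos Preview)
Your proposal is correct and follows essentially the same approach as the paper: multiply the PDE by $u\eta^2$ for a cutoff $\eta$ with the indicated parabolic scaling, integrate by parts, use ellipticity together with Cauchy--Schwarz/Young to absorb the cross terms, and exploit the coercive zeroth-order term $\lambda^{-2}\iint\eta^2 u^2$ to absorb the $ug$ piece. The paper's argument is terser but line-for-line the same; your additional remarks on the legitimacy of $\eta^2 u$ as a test function are a welcome elaboration that the paper leaves implicit.
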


\begin{proof} We fix a ball $\Delta_\lambda$ and find a smooth cutoff function such that $\varphi=1$ on 
 $\Delta_\lambda$ and $\varphi$ vanishes outside $2\Delta_\lambda$. Given the scaling we may assume that
 $|\nabla_x\varphi|\lesssim \lambda^{-1}$ and  $|\partial_t\varphi|\lesssim \lambda^{-2}$. We multiply both sides of the PDE for u by $u\varphi^2$ and integrate by parts. This gives us:
\begin{multline*}
\frac12\iint_{\R^n\times \R}\left[\frac{d}{dt}(u\varphi)^2-u^2\varphi(\partial_t\varphi)+A_\parallel\nabla_xu\cdot (\nabla_x u\varphi^2)+\frac{(u\varphi)^2}{\lambda^2}\right]=\\\iint_{\R^n\times \R}\frac{(g\varphi)(u\varphi)}{\lambda^2}\le \frac12\iint_{2\Delta_\lambda}\frac{g^2}{\lambda^2}+\frac12\iint_{\R^n\times \R}\frac{(u\varphi^2)}{\lambda^2}.
\end{multline*}
The first term vanishes. After hiding the last term on the left-hand side we get that
$$
\iint_{\Delta_\lambda}|\nabla_x u|^2\lesssim \iint_{\R^n\times \R}A_\parallel(\nabla_xu\varphi)\cdot(\nabla_xu\varphi)\lesssim \frac12\iint_{2\Delta_\lambda}\frac{g^2}{\lambda^2}+C\iint_{2\Delta_\lambda}\frac{u^2}{\lambda^2},
$$
where we have used Cauchy-Schwarz and the arithmetic-geometric inequality multiple times and the estimates for $\nabla_x\varphi$ and $\partial_t\varphi$. This establishes our claim. 
\end{proof}

As a corollary we obtain the following. For any point $(y,s)\in \partial\Omega$ consider the nontangential cone $\Gamma_a(y,s)$, where $a>0$ is the aperture of the cone.

 For each $\lambda>0$ consider the set 
 \begin{equation}\label{eq-GL}
 \Gamma_a^\lambda(y,s):=\Gamma_a(y,s)\cap \{(x,\lambda,t): (x,t)\in \R^n\times\R\}
 \end{equation}
This set has parabolic diameter $\sim a\lambda$ and hence can be covered by at most $k=k(n,a)<\infty$
parabolic balls on $\R^n\times\R$ of radius $\lambda$. It follows that if we write using \eqref{P-lambda}
\begin{multline*}
u=\partial_\lambda(I+\lambda^2\mathcal H_\parallel)^{-m}f=\frac{m}{\lambda}\left[2\mathcal P_{\lambda,m+1}f-2\mathcal P_{\lambda,m}f\right]=(I+\lambda^2\mathcal H_\parallel)^{-1}
\frac{m}{\lambda}\left[2\mathcal P_{\lambda,m}f-2\mathcal P_{\lambda,m-1}f\right]\\
=(I+\lambda^2\mathcal H_\parallel)^{-1}\frac{m}{m-1}\partial_\lambda(I+\lambda^2\mathcal H_\parallel)^{-m+1}f:=
(I+\lambda^2\mathcal H_\parallel)^{-1}g,
\end{multline*}
for $g:=\frac{m}{m-1}\partial_\lambda(I+\lambda^2\mathcal H_\parallel)^{-m+1}f$,
then by \eqref{eq5.2}
\begin{multline*}
\lambda^2\iint_{\Delta_\lambda} |\nabla_\parallel \partial_\lambda(I+\lambda^2\mathcal H_\parallel)^{-m}f|^2 dx dt\lesssim \iint_{2\Delta_\lambda} |\partial_\lambda(I+\lambda^2\mathcal H_\parallel)^{-m}f|^2dx dt+\\\iint_{2\Delta_\lambda}|\partial_\lambda(I+\lambda^2\mathcal H_\parallel)^{-m+1}f|^2dx dt.
\end{multline*}
We now sum these Caccioppoli-type inequalities over the at most $k=k(n,a)$ parabolic balls $\Delta_\lambda$ covering $\Gamma_a^\lambda(y,s)$. Each ball appears at most a fixed bounded number of times in the doubled covering $\{2\Delta_\lambda\}$, and the union $\bigcup 2\Delta_\lambda$ is contained in a single slice $\Gamma_b^\lambda(y,s)$ for some aperture $b=b(a,n)>a$. (This is the elementary geometric step: doubling a Whitney-type covering at height~$\lambda$ enlarges only the aperture of the enclosing cone, not its height.) Therefore
\begin{multline*}
\iint_{\Gamma^\lambda_a(y,s)} |\lambda\nabla_\parallel \partial_\lambda(I+\lambda^2\mathcal H_\parallel)^{-m}f|^2\, dx dt\\
\lesssim \iint_{\Gamma^\lambda_b(y,s)} \left[|\partial_\lambda(I+\lambda^2\mathcal H_\parallel)^{-m}f|^2+|\partial_\lambda(I+\lambda^2\mathcal H_\parallel)^{-m+1}f|^2\right]dx dt,
\end{multline*}
where the implicit constant depends only on $n$ and on the apertures $a,b$.
We now multiply both sides by $\lambda^{-n-3}$, integrate in $\lambda$ over $(0,\infty)$ and take square root. It follows that
\begin{multline*}
\mathcal A_a(\lambda \nabla_\parallel\partial_\lambda\mathcal P_{\lambda,m}f)(y,s)=\left(\iint_{\Gamma_a(y,s)} |\lambda\nabla_\parallel \partial_\lambda(I+\lambda^2\mathcal H_\parallel)^{-m}f|^2\lambda^{-n-3}\, dx\, d\lambda\, dt\right)^{1/2}\\
\lesssim \left(\iint_{\Gamma_b(y,s)} | \partial_\lambda(I+\lambda^2\mathcal H_\parallel)^{-m}f|^2\lambda^{-n-3}\, dx\, d\lambda\, dt\right)^{1/2}+ \left(\iint_{\Gamma_b(y,s)} | \partial_\lambda(I+\lambda^2\mathcal H_\parallel)^{-m+1}f|^2\lambda^{-n-3}\, dx\, d\lambda\, dt\right)^{1/2}\\
=\left[\mathcal A_b(\partial_\lambda\mathcal P_{\lambda,m}f)+\mathcal A_b(\partial_\lambda\mathcal P_{\lambda,m-1}f)\right](y,s).
\end{multline*}
It follows that all $L^p$ bounds we have established in the previous subsection for $\mathcal A(\partial_\lambda\mathcal P_{\lambda,m}f)$, where $m$ is large, must also hold for $\mathcal A(\lambda \nabla_\parallel\partial_\lambda\mathcal P_{\lambda,m}f)$.

\subsection{The remaining area functions of $\mathcal P_\lambda f$}

In this section we deduce the remaining bounds from part 1) of \eqref{list} using the estimates we have established in \ref{s5.1}-\ref{s5.2}.

Let us recall the calculation \eqref{P-lambda22} from the previous section. A similar argument shows that
\begin{equation}\label{P-lambda}
\partial_\lambda \mathcal{P}_{\lambda,m}=\frac{2m}{\lambda}\left[\mathcal P_{\lambda,m+1}-\mathcal P_{\lambda,m}\right]
\quad\mbox{and}\quad \mathcal H_\parallel \mathcal{P}_{\lambda,m}=\frac1{\lambda^2}\left[\mathcal{P}_{\lambda,m-1}-\mathcal{P}_{\lambda,m}\right].
\end{equation}
Therefore
$$\lambda\mathcal H_\parallel \mathcal{P}_{\lambda,m}=-\frac1{2m+2}\partial_\lambda\mathcal P_{\lambda,m+1}.$$
and hence the second estimate in the first line of \eqref{list} follows from results of subsection \ref{s5.1}.
Similarly,
\begin{equation}\label{eq513}
\lambda\partial^2_\lambda \mathcal{P}_{\lambda,m}=2m\partial_\lambda\mathcal P_{\lambda,m+1}-2m\partial_\lambda\mathcal P_{\lambda,m}
\end{equation}
and hence again the estimate for the first term of the second line of \eqref{list} again follows. For the second term  of the second line of \eqref{list} we have
\begin{equation}\label{eq514}
\lambda^2\nabla_\parallel\partial^2_\lambda \mathcal{P}_{\lambda,m}=2m\lambda\nabla_\parallel\partial_\lambda\mathcal P_{\lambda,m+1}-2m\lambda\nabla_\parallel\partial_\lambda\mathcal P_{\lambda,m}
\end{equation}
and therefore bounds for this term follow from the bounds of $\mathcal A(\lambda\nabla_\parallel\partial_\lambda \mathcal{P}_{\lambda}f)$ which were proven in subsection \ref{s5.2}.

To summarize the whole section, we have the following:

\begin{lemma}\label{l-area} Let $1<p\le 2$. For all $m>0$ sufficiently large we have
\begin{multline}\label{e52}
\Vert \mathcal{A}(\partial_{\lambda}\mathcal{P}_{\lambda}f)\Vert_{L^p}+\|\mathcal{A}(\lambda\mathcal{H}_{||}\mathcal{P}_{\lambda}f)\Vert_{L^p}+\|\mathcal A(\lambda\nabla_\parallel\partial_\lambda \mathcal{P}_{\lambda}f)\|_{L^p}\\
+\Vert \mathcal{A}(\lambda\partial^2_{\lambda}\mathcal{P}_{\lambda}f)\Vert_{L^p}+\Vert \mathcal{A}(\lambda^2\nabla_{||}\partial^2_{\lambda}\mathcal{P}_{\lambda}f)\Vert_{L^p}\lesssim \|f\|_{\dot{L}^p_{1,1/2}}.
\end{multline}
\end{lemma}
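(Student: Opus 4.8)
The plan is to assemble the five area-function bounds from the two base estimates already in hand, reducing everything to the single estimate for $\mathcal A(\partial_\lambda\mathcal P_\lambda f)$ established in subsection \ref{s5.1} and its consequence $\mathcal A(\lambda\nabla_\parallel\partial_\lambda\mathcal P_\lambda f)$ from subsection \ref{s5.2}. The whole point is that, because $m$ is an arbitrary large integer, any operator identity expressing a target operator as a finite linear combination of operators of the form $\partial_\lambda\mathcal P_{\lambda,m'}$ (respectively $\lambda\nabla_\parallel\partial_\lambda\mathcal P_{\lambda,m'}$) with $m'$ still large automatically transfers the $L^p$ bound. So the first step is to record once and for all the resolvent calculus identities \eqref{P-lambda}, namely $\partial_\lambda\mathcal P_{\lambda,m}=\tfrac{2m}{\lambda}(\mathcal P_{\lambda,m+1}-\mathcal P_{\lambda,m})$ and $\mathcal H_\parallel\mathcal P_{\lambda,m}=\tfrac1{\lambda^2}(\mathcal P_{\lambda,m-1}-\mathcal P_{\lambda,m})$, which follow by differentiating $(I+\lambda^2\mathcal H_\parallel)^{-m}$ in $\lambda$ and by algebraic manipulation of resolvents respectively.

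Next I would run through the five terms in order. The first term $\mathcal A(\partial_\lambda\mathcal P_\lambda f)$ is exactly \eqref{area-lp}, proven by real interpolation (Proposition \ref{IHS2}) between the $L^2$ bound \eqref{area-l2} of \cite{AEN2} and the $L^1$-atomic bound \eqref{eq-atom}; nothing more to do. For the second term, combining the two identities gives $\lambda\mathcal H_\parallel\mathcal P_{\lambda,m}=-\tfrac1{2m+2}\partial_\lambda\mathcal P_{\lambda,m+1}$, so $\mathcal A(\lambda\mathcal H_\parallel\mathcal P_\lambda f)$ is, up to a constant, $\mathcal A(\partial_\lambda\mathcal P_{\lambda,m+1}f)$ and is bounded by the first term. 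The third term $\mathcal A(\lambda\nabla_\parallel\partial_\lambda\mathcal P_\lambda f)$ is the content of subsection \ref{s5.2}: write $\partial_\lambda\mathcal P_{\lambda,m}f=(I+\lambda^2\mathcal H_\parallel)^{-1}g$ with $g$ a multiple of $\partial_\lambda\mathcal P_{\lambda,m-1}f$, apply the Caccioppoli inequality of Lemma \ref{L5.10} on parabolic balls of radius $\lambda$, sum over the $O(1)$ balls covering the slice $\Gamma^\lambda_a(y,s)$, integrate against $\lambda^{-n-3}d\lambda$, and conclude that $\mathcal A_a(\lambda\nabla_\parallel\partial_\lambda\mathcal P_{\lambda,m}f)\lesssim \mathcal A_b(\partial_\lambda\mathcal P_{\lambda,m}f)+\mathcal A_b(\partial_\lambda\mathcal P_{\lambda,m-1}f)$ for a slightly larger aperture $b$, which the first term controls (cone apertures being interchangeable in $L^p$).

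For the last two terms I would use the second-derivative identity: differentiating \eqref{P-lambda} gives $\lambda\partial^2_\lambda\mathcal P_{\lambda,m}=2m\,\partial_\lambda\mathcal P_{\lambda,m+1}-2m\,\partial_\lambda\mathcal P_{\lambda,m}$, so $\mathcal A(\lambda\partial^2_\lambda\mathcal P_\lambda f)$ reduces again to the first term; and applying $\lambda\nabla_\parallel$ to this same identity gives $\lambda^2\nabla_\parallel\partial^2_\lambda\mathcal P_{\lambda,m}=2m\lambda\nabla_\parallel\partial_\lambda\mathcal P_{\lambda,m+1}-2m\lambda\nabla_\parallel\partial_\lambda\mathcal P_{\lambda,m}$, so $\mathcal A(\lambda^2\nabla_\parallel\partial^2_\lambda\mathcal P_\lambda f)$ reduces to the third term. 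Putting the five pieces together and noting all constants depend only on $m,n,p$ yields \eqref{e52} for $1<p\le 2$. The only genuinely substantive input is \eqref{eq-atom}, the $L^1$ kernel estimate away from the atom's support, which has already been carried out in subsection \ref{s5.1}; the remaining work in this lemma is bookkeeping with the resolvent identities and one application of the Caccioppoli estimate, so there is no serious obstacle here — the lemma is essentially a summary of the section. If anything, the one point to be careful about is keeping track that each reduction only invokes $\mathcal P_{\lambda,m'}$ with $m'$ still large enough for the kernel bounds of Lemma \ref{lemma:KernelBounds} and the argument of subsection \ref{s5.1} to apply, which is guaranteed by choosing the initial $m$ sufficiently large.
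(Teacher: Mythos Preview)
Your proposal is correct and follows essentially the same approach as the paper: the lemma is indeed a summary statement, and the paper proves it exactly as you outline, by invoking the results of subsections \ref{s5.1} and \ref{s5.2} for the first and third terms and then reducing the remaining three terms to these via the resolvent identities \eqref{P-lambda}, \eqref{eq513}, \eqref{eq514}. Your caution about keeping $m'$ large enough throughout is also the only subtlety the paper flags.
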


\section{Nontangential maximal function estimates}\label{SN}

The main goal of the section is to establish bounds under point 2) of \eqref{list}. Let us mention that for some of these bounds we use the $\sup$ version of the nontangential maximal function which is denoted as $N$, whereas for others we use the $L^2$-averaged version of this function as defined in \eqref{def.Nap}-\eqref{def.N2} and which is denoted by $\tilde N$. We refer the reader to Definition \ref{def-N} as this distinction matters and not all nontangential bounds we establish are of the strong type using $N$.

We keep the same notation as in the previous section. As currently defined, $\mathcal P_{\lambda,m}$ acts on the space $\dot{L}^2_{1,1/2}(\R^n\times\R)$. However, since for a constant $c$ we have $\mathcal H_\parallel c=0$ (because the tangential derivatives of $c$ all vanish), then also
 $(I+\lambda^2\mathcal H_\parallel)c=c$ and hence we may extend the definition of the resolvent $(I+\lambda^2\mathcal H_\parallel)^{-1}$ by adding constants to the space
$\dot{L}^2_{1,1/2}(\R^n\times\R)$. That is, $\mathcal P_{\lambda,m}$ can be thought of as acting on the functions
\begin{equation}
{\E}(\R^n\times \R):=\{c\in\R\}+\dot{L}^2_{1,1/2}(\R^n\times\R),
\end{equation}
with norm 
$$\left|\fint_{\Delta_1(0,0)}f\,dx dt\right|+\|\nabla_x f\|_{L^2}+\|D^{1/2}_t f\|_{L^2}.$$

\subsection{The nontangential maximal function $ N(\partial_\lambda\mathcal P_\lambda f)$ and $ \tilde N(\nabla_\parallel\mathcal P_\lambda f)$}\label{s6.1}
$\mbox{ }$\medskip

Lemma \ref{L5.10} is again relevant here. Fix $m>0$ and $\lambda>0$ and consider 
$u=\mathcal P_{\lambda,m}( f-\langle f\rangle_{\Delta_\lambda})$ for some fixed boundary parabolic ball 
$\Delta_\lambda$ of radius $\lambda$. Here $\langle f\rangle_{\Delta_\lambda}$ denotes the average over $\Delta_\lambda$.
Since $\nabla_\parallel$ kills constants $\nabla_\parallel \mathcal P_{\lambda,m}( f-\langle f\rangle_{\Delta_\lambda})=\nabla_\parallel \mathcal P_{\lambda,m} f$. Writing $u$ as $(I+\lambda^2\mathcal H_\parallel)^{-1}\mathcal P_{\lambda,m-1}( f-\langle f\rangle_{\Delta_\lambda})$ we have by 
Lemma \ref{L5.10}
\begin{multline*}
\iint_{\Delta_\lambda} |\nabla_\parallel (I+\lambda^2\mathcal H_\parallel)^{-m}f|^2 dx dt\lesssim \iint_{2\Delta_\lambda}|\lambda^{-1}\mathcal P_{\lambda,m}( f-\langle f\rangle_{\Delta_\lambda})|^2dx dt+\\\iint_{2\Delta_\lambda}|\lambda^{-1}\mathcal P_{\lambda,m-1}( f-\langle f\rangle_{\Delta_\lambda})|^2dx dt.
\end{multline*}
It follows that if we can understand 
${N}(\lambda^{-1}\mathcal P_{\lambda}( f-\langle f\rangle_{\Delta_\lambda}))$, then the bounds for 
$ \tilde N(\nabla_\parallel\mathcal P_\lambda f)$ would follow. \vglue1mm

Similarly, for $\partial_\lambda\mathcal P_{\lambda,m} f$ we know (thanks to \eqref{P-lambda}) that it can be written  as a difference $\frac{2m}{\lambda}\left[\mathcal P_{\lambda,m+1}-\mathcal P_{\lambda,m}\right]$,
which kills constants since 
$$\left[\mathcal P_{\lambda,m+1}-\mathcal P_{\lambda,m}\right]c=c-c=0.$$
It follows that 
$$\partial_\lambda\mathcal P_{\lambda,m} f=
\frac{2m}{\lambda}\left[\mathcal P_{\lambda,m+1}-\mathcal P_{\lambda,m}\right]f=
\frac{2m}{\lambda}\left[\mathcal P_{\lambda,m+1}-\mathcal P_{\lambda,m}\right](f-\langle f\rangle_{\Delta_\lambda}).$$
Once again, it suffices to understand ${N}(\lambda^{-1}\mathcal P_{\lambda}( f-\langle f\rangle_{\Delta_\lambda}))$. 
In summary, it suffices in both cases to find good nontangential bounds of $\lambda^{-1}\mathcal P_{\lambda}( f-\langle f\rangle_{\Delta_\lambda})$. 

Now, the notation here is a bit unfortunate and invites confusion if we are not careful enough. To be more precise, by 
nontangential bounds for ${N}(\lambda^{-1}\mathcal P_{\lambda}( f-\langle f\rangle_{\Delta_\lambda}))$ we will understand that for a fixed boundary point $(z,\tau)\in\R^n\times\R$ and its nontangential cone $\Gamma(z,\tau)$ we will only consider balls $\Delta_\lambda=\Delta_\lambda(z,\tau)$ with fixed center (given by the vertex of the nontangential cone). This implies that, within the cone $\Gamma(z,\tau)$, the average $\langle f\rangle_{\Delta_\lambda}$ is a constant function (only depending on $\lambda$). We now have an unambiguous definition of the expression ${N}(\lambda^{-1}\mathcal P_{\lambda}( f-\langle f\rangle_{\Delta_\lambda}))(z,\tau)$.\vglue2mm

Let us fix a point $(z,\tau)\in\R^n\times\R$ and its nontangential cone $\Gamma(z,\tau)$.
Consider any $\lambda>0$ and any point $(X,t)=(x,\lambda,t)\in \Gamma(z,\tau)$. We decompose
the boundary $\R^n\times \R$ as follows:

Let $\Delta_{2^j\lambda}=\Delta_{2^j\lambda}(z,\tau)$ be boundary balls of radius $2^j\lambda$ centered at
$(z,\tau)$ (the vertex of $\Gamma=\Gamma_a(z,\tau)$). Let $j_0$ be the smallest positive integer such that for any $(x,\lambda,t)\in \Gamma_a(z,\tau)$ and $(y,s)\notin \Delta_{2^{j_0}\lambda}$ we have separation
of parabolic distances of $(x,t)$ and $(y,s)$ by at least $\lambda$, i.e.,
 $$d_p((x,t),(y,s))=\|(x-y,t-s)\| \ge \lambda,$$
where $\|\cdot\|$ is defined by \eqref{E:par-norm}. Such a $j_0$ always exists (only depending on $a$ - the aperture  of $\Gamma$). We then write

\begin{equation}\label{eq6.2a}
\R^n\times\R=\Delta_{2^{j_0}\lambda}\cup \bigcup_{j> j_0}\left(\Delta_{2^j\lambda}\setminus \Delta_{2^{j-1}\lambda}\right).
\end{equation}
The dyadic nature of this decomposition ensures that there exists a fixed $\beta>0$ such that
\begin{multline}\label{eq6.2}
\mbox{If }(x,\lambda,t)\in\Gamma(z,\tau)\mbox{ and }(y,s)\in \left(\Delta_{2^j\lambda}\setminus \Delta_{2^{j-1}\lambda}\right)\\\mbox{ then } d_p((x,t),(y,s))=\|(x-y,t-s)\| \ge \beta2^{j}\lambda \quad\mbox{for all }j>j_0.
\end{multline}

Using the kernel bounds \eqref{eq:kernelBounds} we may write an estimate for the value of $\lambda^{-1}\mathcal P_{\lambda}(f-\langle f\rangle_{\Delta_\lambda})(x,\lambda,t)$ as follows:

\begin{multline}\label{eq6.3}
|\lambda^{-1}\mathcal P_{\lambda}( f-\langle f\rangle_{\Delta_\lambda})(x,\lambda,t)|\le 
\int_{\Delta_{2^{j_0}\lambda}}|\lambda^{-1}K_{\lambda,m}(x,t,y,s)||f-\langle f\rangle_{\Delta_\lambda}|(y,s)dyds\\
+\sum_{j>j_0}\int_{\Delta_{2^{j}\lambda}\setminus \Delta_{2^{j-1}\lambda}}|\lambda^{-1}K_{\lambda,m}(x,t,y,s)||f-\langle f\rangle_{\Delta_\lambda}|(y,s)dyds
\\
=\int_{\Delta_{2^{j_0}\lambda}}|\lambda^{-1}K_{\lambda,m}(x,t,y,s)||f-\langle f\rangle_{\Delta_{2^{j_0}\lambda}}|(y,s)dyds
\\
+\int_{\Delta_{2^{j_0}\lambda}}|\lambda^{-1}K_{\lambda,m}(x,t,y,s)|(\langle f\rangle_{\Delta_{2^{j_0}\lambda}}-\langle f\rangle_{\Delta_\lambda}|(y,s)dyds
\\
+\sum_{j>j_0}\int_{\Delta_{2^{j}\lambda}\setminus \Delta_{2^{j-1}\lambda}}|\lambda^{-1}K_{\lambda,m}(x,t,y,s)||f-\langle f\rangle_{\Delta_{2^j\lambda}}|(y,s)dyds
\\
+\sum_{j>j_0}\int_{\Delta_{2^{j}\lambda}\setminus \Delta_{2^{j-1}\lambda}}|\lambda^{-1}K_{\lambda,m}(x,t,y,s)||\langle f\rangle_{\Delta_{2^j\lambda}}-\langle f\rangle_{\Delta_\lambda}|(y,s)dyds.
\end{multline}
Hence there are four terms to estimate. Given \eqref{eq:kernelBounds} and \eqref{eq6.2} for sufficiently large $m>n/2+1$
we have
\begin{multline}\label{eq6.4}
|\lambda^{-1}K_{\lambda,m}(x,t,y,s)|\le \frac{C}{\lambda^{n+3}}\mbox{ for }(y,s)\in \Delta_{2^{j_0}\lambda}\qquad\mbox{and}\\
|\lambda^{-1}K_{\lambda,m}(x,t,y,s)|\le \frac{C}{\lambda^{n+3}}e^{-c4^j} \mbox{ for }(y,s)\in \Delta_{2^{j}\lambda}\setminus \Delta_{2^{j-1}\lambda},\quad j>j_0.
\end{multline}
Let us also recall the definition of the sharp maximal function $f^\sharp$ 
$$f^\sharp(x,t)=\sup_{\Delta\ni (x,t)}\, r(\Delta)^{-1}\fint_{\Delta}|f-\langle f\rangle_{\Delta}|.$$
Here $r(\Delta)$ denotes the radius of a parabolic ball $\Delta$. Then 
$$f^\sharp(x,t)\lesssim M(|\nabla_x f|)(x,t)+M_xM_t(|H_tD^{1/2}_t f|)(x,t),$$
and for any parabolic ball $\Delta$ centered at $(x,t)$
\begin{equation}\label{eq-poinc}
\frac1{r(\Delta)}\, \fint_{\Delta}\left|f-\langle f\rangle_{\Delta}\right|dxdt\le f^\sharp(x,t).
\end{equation}
See for example \cite{DinN}. It follows that for $f\in \dot{L}^p_{1,1/2}(\R^n\times\R)$ and $1<p<\infty$ 
$f^\sharp$  belongs to $L^p(\R^n\times\R)$ and its norm is controlled by $\|f\|_{\dot{L}^p_{1,1/2}(\R^n\times\R)}$. In particular this implies that
\begin{equation}\label{eq6.5}
\fint_{\Delta_{2^j\lambda}}|f(y,s)-\langle f\rangle_{\Delta_{2^j\lambda}}|\le 2^{j}\lambda f^\sharp(z,\tau),\quad |\langle f\rangle_{\Delta_{2^j\lambda}}-\langle f\rangle_{\Delta_\lambda}|\le Cj2^{j}\lambda f^\sharp(z,\tau).
\end{equation}
Using \eqref{eq6.4}-\eqref{eq6.5} in \eqref{eq6.3} we then have

\begin{multline}\label{eq6.6}
|\lambda^{-1}\mathcal P_{\lambda}( f-\langle f\rangle_{\Delta_\lambda})(x,\lambda,t)|\\
\lesssim f^\sharp(z,\tau)\int_{\Delta_{2^{j_0}\lambda}}\lambda^{-n-2} dyds +
f^\sharp(z,\tau)\int_{\Delta_{2^{j_0}\lambda}}\lambda^{-n-2} dyds
\\
+f^\sharp(z,\tau)\sum_{j>j_0}\int_{\Delta_{2^{j}\lambda}\setminus \Delta_{2^{j-1}\lambda}}\lambda^{-n-2}2^{j}e^{-c4^j}dyds
+f^\sharp(z,\tau)\sum_{j>j_0}j\int_{\Delta_{2^{j}\lambda}\setminus \Delta_{2^{j-1}\lambda}}\lambda^{-n-2}2^je^{-c4^j}dyds.
\end{multline}
We may now enlarge the integrals in the last line and integrate over $\Delta_{2^{j}\lambda}$ instead of the annuli. Given that $|\Delta_{2^{j}\lambda}|\sim 2^{(n+2)j}\lambda^{n+2}$ it follows that
\begin{multline*}
|\lambda^{-1}\mathcal P_{\lambda}( f-\langle f\rangle_{\Delta_\lambda})(x,\lambda,t)|\lesssim
M(f^\sharp)(z,\tau)\\+\sum_{j > j_0}j2^{(n+3)j}e^{-c4^j}\left[M(f^\sharp)(z,\tau)+f^\sharp(z,\tau)\right]\lesssim M(f^\sharp)(z,\tau).
\end{multline*}
and therefore also
$$N(\lambda^{-1}\mathcal P_{\lambda}( f-\langle f\rangle_{\Delta_\lambda}))(z,\tau)=\sup_{(x,\lambda,t)\in\Gamma(z,\tau)}|\lambda^{-1}\mathcal P_{\lambda}( f-\langle f\rangle_{\Delta_\lambda})(x,\lambda,t)|\lesssim  M(f^\sharp)(z,\tau),
$$
and since $M(f^\sharp)\in L^p(\R^n\times\R)$ whenever $f\in \dot{L}^p_{1,1/2}(\R^n\times\R)$ and $1<p<\infty$, we have just shown that
$$ \|N(\partial_\lambda\mathcal P_\lambda f)\|_{L^p(\R^n\times\R)}+ \|\tilde N(\nabla_\parallel\mathcal P_\lambda f)\|_{L^p(\R^n\times\R)}\le C
\|f\|_{\dot{L}^p_{1,1/2}(\R^n\times\R)}.$$

\subsection{The remaining nontangential maximal functions of $\mathcal P_\lambda f$}

Clearly, \eqref{eq513} immediately implies that the bounds we have established in subsection \ref{s6.1}
imply nontangential bounds for $\lambda\partial^2_\lambda\mathcal P_\lambda f$. By \eqref{P-lambda}
we see that
\begin{equation}\nonumber
\lambda\nabla_\parallel\partial_\lambda \mathcal{P}_{\lambda,m}=2m\nabla_\parallel\mathcal P_{\lambda,m+1}-2m\nabla_\parallel\mathcal P_{\lambda,m},
\end{equation}
from which we conclude that the nontangential bounds for $\lambda\nabla_\parallel\partial_\lambda \mathcal{P}_{\lambda}$ follows from the result of subsection \ref{s6.1}.
In summary,

\begin{lemma}\label{l-nont} Let $1<p<\infty$. For all $m>0$ sufficiently large we have
\begin{multline}\label{e62}
\Vert {N}(\partial_{\lambda}\mathcal{P}_{\lambda}f)\Vert_{L^p}+
\Vert \tilde{N}(\lambda\nabla_{||}\partial_{\lambda}\mathcal{P}_{\lambda}f)\Vert_{L^p}
+\Vert {N}(\lambda\partial^2_{\lambda}\mathcal{P}_{\lambda}f)\Vert_{L^p}+\Vert \tilde{N}(\nabla_{||}\mathcal{P}_{\lambda}f)\Vert_{L^p}\lesssim \|f\|_{\dot{L}^p_{1,1/2}}.
\end{multline}
\end{lemma}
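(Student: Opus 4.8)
The plan is to reduce all four quantities in \eqref{e62} to nontangential control of a single operator family, namely $\lambda^{-1}\mathcal P_{\lambda}$ applied to $f$ with a suitable local average subtracted, and then to dominate that by the parabolic sharp maximal function $f^\sharp$; since $f^\sharp\lesssim M(|\nabla_\parallel f|)+M_xM_t(|H_tD_t^{1/2}f|)$ has $L^p$ norm $\lesssim\|f\|_{\dot{L}^p_{1,1/2}}$ and the Hardy--Littlewood maximal operator is bounded on $L^p$ for $1<p<\infty$, this will close the estimate. Concretely, both $\nabla_\parallel$ and the difference $\mathcal P_{\lambda,m+1}-\mathcal P_{\lambda,m}$, which by the first identity in \eqref{P-lambda} equals $\frac{\lambda}{2m}\partial_\lambda\mathcal P_{\lambda,m}$, annihilate constants, so along a fixed cone $\Gamma(z,\tau)$ one may replace $f$ by $f-\langle f\rangle_{\Delta_\lambda}$, where $\Delta_\lambda=\Delta_\lambda(z,\tau)$ is the boundary parabolic ball of radius $\lambda$ centered at the vertex (the center being fixed, $\langle f\rangle_{\Delta_\lambda}$ is a $\lambda$-dependent constant, so $N(\lambda^{-1}\mathcal P_\lambda(f-\langle f\rangle_{\Delta_\lambda}))$ is unambiguous). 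Writing $\mathcal P_{\lambda,m}=(I+\lambda^2\mathcal H_\parallel)^{-1}\mathcal P_{\lambda,m-1}$ and invoking the Caccioppoli-type estimate of Lemma \ref{L5.10}, the gradient on $\Delta_\lambda$ is controlled on $2\Delta_\lambda$ by $\lambda^{-1}\mathcal P_{\lambda,m}(f-\langle f\rangle_{\Delta_\lambda})$ and $\lambda^{-1}\mathcal P_{\lambda,m-1}(f-\langle f\rangle_{\Delta_\lambda})$; likewise $\partial_\lambda\mathcal P_\lambda f$ via \eqref{P-lambda}. Thus both $\Vert N(\partial_\lambda\mathcal P_\lambda f)\Vert_{L^p}$ and $\Vert\tilde N(\nabla_\parallel\mathcal P_\lambda f)\Vert_{L^p}$ reduce to a bound for $N(\lambda^{-1}\mathcal P_\lambda(f-\langle f\rangle_{\Delta_\lambda}))$.

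For that key quantity the plan is to decompose the boundary dyadically about the cone vertex, $\R^n\times\R=\Delta_{2^{j_0}\lambda}\cup\bigcup_{j>j_0}(\Delta_{2^j\lambda}\setminus\Delta_{2^{j-1}\lambda})$, with $j_0$ the first scale for which the parabolic distance from the cone to the complement of $\Delta_{2^{j_0}\lambda}$ is $\gtrsim\lambda$. The Gaussian kernel bounds of Lemma \ref{lemma:KernelBounds}, and their consequence for $\partial_\lambda\mathcal P_{\lambda,m}$, give $|\lambda^{-1}K_{\lambda,m}|\lesssim\lambda^{-n-3}$ on the core and an additional gain $e^{-c4^j}$ on the $j$-th annulus, provided $m$ is large (it is enough that $m>n/2+1$). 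On each annulus I would split $f-\langle f\rangle_{\Delta_\lambda}$ into $f-\langle f\rangle_{\Delta_{2^j\lambda}}$ plus the telescoping average difference $\langle f\rangle_{\Delta_{2^j\lambda}}-\langle f\rangle_{\Delta_\lambda}$, bound the first by $2^j\lambda\,f^\sharp(z,\tau)$ using \eqref{eq-poinc} and the second by $\lesssim j2^j\lambda\,f^\sharp(z,\tau)$, and sum the rapidly convergent series in $j$. This gives the pointwise estimate $|\lambda^{-1}\mathcal P_\lambda(f-\langle f\rangle_{\Delta_\lambda})(x,\lambda,t)|\lesssim M(f^\sharp)(z,\tau)$ uniformly over $\Gamma(z,\tau)$, hence $N(\lambda^{-1}\mathcal P_\lambda(f-\langle f\rangle_{\Delta_\lambda}))\lesssim M(f^\sharp)$, and the first and last terms of \eqref{e62} follow.

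The remaining two terms are then purely algebraic. From \eqref{eq513}, $\lambda\partial_\lambda^2\mathcal P_{\lambda,m}=2m(\partial_\lambda\mathcal P_{\lambda,m+1}-\partial_\lambda\mathcal P_{\lambda,m})$, so by subadditivity of $N$ one has $N(\lambda\partial_\lambda^2\mathcal P_\lambda f)\lesssim N(\partial_\lambda\mathcal P_{\lambda,m+1}f)+N(\partial_\lambda\mathcal P_{\lambda,m}f)$, which is already controlled; similarly $\lambda\nabla_\parallel\partial_\lambda\mathcal P_{\lambda,m}=2m(\nabla_\parallel\mathcal P_{\lambda,m+1}-\nabla_\parallel\mathcal P_{\lambda,m})$ from \eqref{P-lambda} yields $\tilde N(\lambda\nabla_\parallel\partial_\lambda\mathcal P_\lambda f)\lesssim\tilde N(\nabla_\parallel\mathcal P_{\lambda,m+1}f)+\tilde N(\nabla_\parallel\mathcal P_{\lambda,m}f)$. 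Summing the four bounds gives \eqref{e62}. I expect the only genuine obstacle to be the dyadic argument of the second paragraph --- the careful bookkeeping of the nested average subtractions across scales, together with taking $m$ large enough that the kernel is integrable against the annular sums; everything else is soft.
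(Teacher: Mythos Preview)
Your proposal is correct and follows essentially the same route as the paper: reduce $N(\partial_\lambda\mathcal P_\lambda f)$ and $\tilde N(\nabla_\parallel\mathcal P_\lambda f)$ to $N(\lambda^{-1}\mathcal P_\lambda(f-\langle f\rangle_{\Delta_\lambda}))$ via \eqref{P-lambda} and Lemma~\ref{L5.10}, bound the latter pointwise by $M(f^\sharp)$ through the dyadic annular decomposition \eqref{eq6.2a}--\eqref{eq6.2}, the kernel estimates \eqref{eq6.4}, and the telescoping-average bounds \eqref{eq6.5}, and then obtain the remaining two terms algebraically from \eqref{eq513} and \eqref{P-lambda}. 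The bookkeeping you flag as the only genuine obstacle is exactly what is carried out in \eqref{eq6.3}--\eqref{eq6.6}.
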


\section{Carleson measure bound}

The part 3) of \eqref{list} will follow from the following lemma:

\begin{lemma}\label{lemma:CarelsonBound}
    Let \(g:\mathbb{R}^{n}\times\R\to\mathbb{R}\) be a bounded function. Then
    \begin{align}\Vert C(\lambda \mathcal{P}_{\lambda}\partial_jg)\Vert_{L^\infty}\lesssim \Vert g \Vert_{L^\infty}.\label{eq:CarlesonFctBound}\end{align}
\end{lemma}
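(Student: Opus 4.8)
The plan is to reduce \eqref{eq:CarlesonFctBound} to a scale–invariant Carleson estimate, then split $g$ into a piece local to the Carleson box and a global piece, controlling the first by an $L^2$ square function estimate and the second by off–diagonal decay. Fix $j\in\{1,\dots,n\}$ and write $\theta_\lambda:=\lambda\,\mathcal P_{\lambda,m}\partial_j$, an operator on functions of $(x,t)\in\R^n\times\R$; its integral kernel is $-\lambda\,\partial_{y_j}K_{\lambda,m}(x,t,y,s)$, obtained from $K_{\lambda,m}$ by an integration by parts in $y_j$ that is legitimate for bounded $g$ in view of the Gaussian decay in Lemma~\ref{lemma:KernelBounds}. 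Since $\lambda\mathcal P_\lambda\partial_j g=\theta_\lambda g$, unwinding the definition of the Carleson functional $C(\cdot)$ shows that \eqref{eq:CarlesonFctBound} is equivalent to
\[
\sup_{\Delta}\frac1{|\Delta|}\iint_{T(\Delta)}\frac{|\theta_\lambda g(x,\lambda,t)|^2}{\lambda}\,dx\,d\lambda\,dt\ \lesssim\ \|g\|_{L^\infty}^2,
\]
the supremum being over all boundary parabolic balls $\Delta=\Delta_r(x_0,t_0)\subset\R^n\times\R$ (homogeneity forces the square of the $L^\infty$ norm; in part~3) of \eqref{list} one has $g=a_{n+1,j}$ with $\|g\|_{L^\infty}\le\Lambda$, so the right–hand side is $O(1)$ there).

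Two facts about the family $\{\theta_\lambda\}$ will be used, both belonging to the $L^2$ theory underlying the parabolic Kato problem (\cite{AEN}; see also \cite{AEN2,AtEN}, and note that the $L^2$ bound recalled in \S\ref{s5.1} is of the same nature): \textbf{(SF)} the square function bound $\iint_{\R^{n+1}_+\times\R}|\theta_\lambda h|^2\,\lambda^{-1}\,dx\,d\lambda\,dt\lesssim\|h\|_{L^2(\R^n\times\R)}^2$; and \textbf{(OD)} the $L^2$ off–diagonal (Gaffney) estimates $\|\1_E\,\theta_\lambda(\1_F h)\|_{L^2}\lesssim_N(1+d_p(E,F)/\lambda)^{-N}\|h\|_{L^2}$ for every $N$ (in particular $\theta_\lambda$ is uniformly $L^2$–bounded). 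Observe that (SF) already encodes the cancellation $\theta_\lambda(\mathrm{const})=0$, equivalently $\int\partial_{y_j}K_{\lambda,m}\,dy\,ds=0$; without it (SF) would already fail as $\lambda\to\infty$. Since (SF) and (OD) hold equally for $\mathcal H^*_\parallel$ and for any admissible power of the resolvent, the assertion with $\mathcal P_{\lambda,m}$ replaced by $\tilde{\mathcal P}^*_\lambda$ — the form actually invoked in part~3) of \eqref{list} — follows identically.

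Fixing $\Delta=\Delta_r(x_0,t_0)$, I would split $g=g\1_{2\Delta}+g\1_{(2\Delta)^c}$. For the local part, discard the truncation to $T(\Delta)$ and apply (SF):
\[
\iint_{T(\Delta)}\frac{|\theta_\lambda(g\1_{2\Delta})|^2}{\lambda}\ \le\ \iint_{\R^{n+1}_+\times\R}\frac{|\theta_\lambda(g\1_{2\Delta})|^2}{\lambda}\ \lesssim\ \|g\1_{2\Delta}\|_{L^2}^2\ \le\ \|g\|_{L^\infty}^2|2\Delta|\ \lesssim\ \|g\|_{L^\infty}^2|\Delta|.
\]
For the far part, decompose $(2\Delta)^c=\bigcup_{k\ge1}R_k$ into dyadic parabolic annuli $R_k:=\Delta_{2^{k+1}r}(x_0,t_0)\setminus\Delta_{2^{k}r}(x_0,t_0)$, so $|R_k|\lesssim(2^kr)^{n+2}$. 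On $T(\Delta)$ one has $\lambda< r$, and $d_p\big((x,t),R_k\big)\gtrsim 2^kr\ge 2^k\lambda$ for $(x,\lambda,t)\in T(\Delta)$; writing $T(\Delta)_\lambda:=\{(x,t):(x,\lambda,t)\in T(\Delta)\}$, (OD) with $N>\tfrac{n+2}{2}$ and summation of the resulting geometric series give $\big\|\1_{T(\Delta)_\lambda}\theta_\lambda(g\1_{(2\Delta)^c})\big\|_{L^2}^2\lesssim\|g\|_{L^\infty}^2\,\lambda^{2N}r^{\,n+2-2N}$, whence
\[
\iint_{T(\Delta)}\frac{|\theta_\lambda(g\1_{(2\Delta)^c})|^2}{\lambda}\ =\ \int_0^{cr}\frac1\lambda\big\|\1_{T(\Delta)_\lambda}\theta_\lambda(g\1_{(2\Delta)^c})\big\|_{L^2}^2\,d\lambda\ \lesssim\ \|g\|_{L^\infty}^2\,r^{\,n+2-2N}\!\int_0^{cr}\!\lambda^{2N-1}\,d\lambda\ \lesssim\ \|g\|_{L^\infty}^2\,r^{\,n+2}\ \lesssim\ \|g\|_{L^\infty}^2|\Delta|,
\]
the $\lambda$–integral converging precisely because of the extra factor $\lambda$ built into $\theta_\lambda$. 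Adding the two pieces and dividing by $|\Delta|\sim r^{\,n+2}$ yields the displayed Carleson bound, hence \eqref{eq:CarlesonFctBound}. (With (SF) on the local piece replaced by the plain $L^2$ bound together with John–Nirenberg, the same scheme gives the stronger statement with $\|g\|_{L^\infty}$ replaced by $\|g\|_{\BMO}$.)

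The local/global split and the $\lambda$–integration are routine; the one substantive input is the square function estimate (SF) for $\theta_\lambda=\lambda\mathcal P_\lambda\partial_j$, and I expect writing it cleanly to be the main obstacle. It is a genuine quadratic estimate, not a consequence of the pointwise kernel bounds of Lemma~\ref{lemma:KernelBounds} — indeed pointwise bounds on $\partial_{y_j}K_{\lambda,m}$ are not available for merely bounded measurable coefficients. The cleanest route is to recognise the adjoint family $\theta_\lambda^{*}=-\lambda\,\partial_j(I+\lambda^2\mathcal H^{*}_\parallel)^{-m}$ as a member of $\{\lambda\nabla_\parallel(I+\lambda^2\mathcal H^{*}_\parallel)^{-m}\}$, for which the quadratic estimate is one of the standard outputs of the parabolic Kato technology of \cite{AEN}, and then to transfer back to $\theta_\lambda$ via the usual duality between a quadratic estimate for a family and $L^2$–boundedness of its associated synthesis operator. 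The off–diagonal estimates (OD), by contrast, follow softly from the energy estimates for the resolvent equation $(I+\lambda^2\mathcal H_\parallel)u=g$.
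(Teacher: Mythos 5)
Your strategy is genuinely different from the paper's. The paper proceeds by induction on the power $m$ in $\mathcal P_{\lambda,m}=\mathcal E_\lambda^m$: the base case $m=1$ is taken directly from Lemma~8.3 of \cite{AEN2}, and the inductive step decomposes the intermediate function $\mathcal E_\lambda^{m-1}\partial_jg$ (not $g$) into a piece near $\Delta$ and dyadic annuli, using uniform $L^2$ boundedness of $\mathcal E_\lambda$ on the near piece and the off-diagonal decay of Lemma~\ref{lemma:OffDiagonalResolvent} on the far pieces to reduce the Carleson functional of order $m$ to that of order $m-1$. You instead split $g$ itself into $g\1_{2\Delta}+g\1_{(2\Delta)^c}$ and handle the two pieces with a quadratic estimate (SF) and off-diagonal decay (OD) for the full family $\theta_\lambda=\lambda\mathcal P_{\lambda,m}\partial_j$, with no induction. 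Your far-part computation is sound (indeed Lemma~\ref{lemma:OffDiagonalResolvent2} gives exponential rather than just polynomial decay), and the local-part reduction to (SF) is clean. The upshot is that your route rests on a single genuine quadratic estimate in place of the paper's appeal to the $m=1$ Carleson bound from \cite{AEN2} plus a Carleson-inheritance induction.

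There is, however, a real gap in how you propose to obtain (SF). The duality between quadratic estimates and synthesis operators goes the other way around: a quadratic estimate $\|\theta_\lambda^* f\|_{T^2}\lesssim\|f\|_{L^2}$ for the \emph{adjoint} family is equivalent to $L^2$-boundedness of the synthesis map $F\mapsto\int_0^\infty\theta_\lambda F_\lambda\,\tfrac{d\lambda}{\lambda}$, whereas what you need for the local part is the quadratic estimate $\|\theta_\lambda h\|_{T^2}\lesssim\|h\|_{L^2}$ for $\theta_\lambda$ itself — which would instead be dual to $L^2$-boundedness of $F\mapsto\int_0^\infty\theta_\lambda^* F_\lambda\,\tfrac{d\lambda}{\lambda}$. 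Knowing the quadratic estimate for $\theta_\lambda^*$ does not by itself give you (SF). The claim (SF) is nevertheless true, but a correct route would be via functional calculus: write $\theta_\lambda=\psi(\lambda^2\mathcal H_\parallel)\circ(\mathcal H_\parallel^{-1/2}\partial_j)$ with $\psi(z)=z^{1/2}(1+z)^{-m}$, note that $\mathcal H_\parallel^{-1/2}\partial_j$ is $L^2$-bounded because its adjoint $-\partial_j(\mathcal H_\parallel^*)^{-1/2}$ is a component of the parabolic Riesz transform for $\mathcal H_\parallel^*$ (a consequence of the parabolic Kato estimate $\|\sqrt{\mathcal H^*_\parallel}f\|_{L^2}\sim\|\D f\|_{L^2}\ge\|\nabla_{\parallel}f\|_{L^2}$), and then invoke the vertical quadratic estimate for $\psi(\lambda^2\mathcal H_\parallel)$ coming from the $H^\infty$ functional calculus of the parabolic operator. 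Either way this pulls in a nontrivial amount of parabolic Kato technology; the paper's inductive argument sidesteps it by quoting \cite{AEN2} only for $m=1$, at the cost of an extra annular decomposition at each inductive step. Both are workable, but as written your justification of (SF) does not stand, and you would need to replace the duality argument by the functional-calculus one (or adopt the paper's induction).
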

Although the operator in \eqref{list} is $\tilde{\mathcal P}^*_\lambda$, it suffices to prove it for  
$\mathcal{P}_{\lambda}$ as these operators are analogous.

    Let us denote the resolvent by \(\mathcal{E}_\lambda:=(I+\lambda^2\mathcal{H}_{||})^{-1}\) and rewrite the operator \(\mathcal{P}_\lambda=\mathcal{P}_{\lambda,m}=\mathcal{E}_\lambda^m\).
    
The proof of Lemma \ref{lemma:CarelsonBound} proceeds by induction starting from Lemma 4.4 of \cite{AEN2}, which establishes that the operator  \(\mathcal{E}_\lambda\)
enjoys good off-diagonal estimates:
    
\begin{lemma}\label{lemma:OffDiagonalResolvent}
    Let \(E,F\subset\R^n\times\R\) two measurable sets with their parabolic distance \(d:=\mathrm{dist}(E,F)>0\). Then there exists \(c>0\) such that
    \begin{enumerate}
        \item \(\displaystyle\int_F |\mathcal{E}_\lambda f|^2 + |\lambda \nabla_{||}\mathcal{E}_\lambda f|^2 dxdt\lesssim e^{-c\frac{d}{\lambda}}\int_E |f|^2dxdt, \) and
        \item \(\displaystyle\int_F |\lambda\mathcal{E}_\lambda \mathrm{div}_\parallel(g)|^2 dxdt\lesssim e^{-c\frac{d}{\lambda}}\int_E |g|^2dxdt \)
    \end{enumerate}
    for all \(f\in L^2(\mathbb{R}^{n+1}), g\in L^2(\mathbb{R}^{n+1})^n\) supported on \(E\).
\end{lemma}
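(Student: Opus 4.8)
The statement is the parabolic analogue of the Gaffney--Davies off-diagonal estimates for the resolvent of a divergence-form operator, and the plan is to prove it by the weighted energy (exponential multiplier) method. Fix $\lambda>0$, let $f\in L^2(\R^n\times\R)$ be supported in $E$, and set $u:=\mathcal E_\lambda f$, so that $u$ belongs to the energy class of $\mathcal H_\parallel$ and solves, in the weak sense of \cite{AEN},
$$\int u\,w+\lambda^2\int A_\parallel\nabla_\parallel u\cdot\nabla_\parallel w+\lambda^2\int H_tD_t^{1/2}u\cdot D_t^{1/2}w=\int f\,w$$
for all admissible test functions $w$. Pick a bounded function $\phi$ on $\R^n\times\R$ which is Lipschitz in the parabolic metric --- say $|\nabla_\parallel\phi|\le1$ and $|\phi(x,t)-\phi(x,t')|\le|t-t'|^{1/2}$ --- with $\phi\equiv0$ on $E$ and $\phi\equiv d:=\dist(E,F)$ on $F$; a mollification of $\min\{d_p(\cdot,E),d\}$ has these properties. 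For a small parameter $\gamma>0$ to be chosen, set $\eta:=e^{\gamma\phi/\lambda}$, so that $|\nabla_\parallel\eta|\le(\gamma/\lambda)\eta$, while $\eta\le1$ on $E$ and $\eta\ge e^{\gamma d/\lambda}$ on $F$.

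The core estimate to be proved is the weighted Caccioppoli bound
$$\int|u|^2\eta^2+\lambda^2\int|\nabla_\parallel u|^2\eta^2\ \lesssim\ \int_E|f|^2 .$$
Granting it, part (1) follows at once: since $\eta^{-2}\le e^{-2\gamma d/\lambda}$ on $F$,
$$\int_F\bigl(|u|^2+|\lambda\nabla_\parallel u|^2\bigr)\le e^{-2\gamma d/\lambda}\int\bigl(|u|^2+|\lambda\nabla_\parallel u|^2\bigr)\eta^2\lesssim e^{-2\gamma d/\lambda}\int_E|f|^2,$$
so one may take $c=2\gamma$. To establish the weighted bound, test the weak formulation with $w=u\eta^2$. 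The zeroth-order term gives $\int|u|^2\eta^2$; in the second-order term, ellipticity produces the good term $\gtrsim\lambda^2\int|\nabla_\parallel u|^2\eta^2$, while the cross term $2\lambda^2\int A_\parallel\nabla_\parallel u\cdot u\,\eta\nabla_\parallel\eta$ is bounded by $2\Lambda\gamma\lambda\int|\nabla_\parallel u|\,|u|\,\eta^2$ and, by Cauchy--Schwarz with a small constant, absorbed once $\gamma$ is small; the right-hand side $\int_E f\,u\,\eta^2\le\|f\|_{L^2(E)}\bigl(\int|u|^2\eta^2\bigr)^{1/2}$ is absorbed similarly.

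The main obstacle is the half-order time term $\lambda^2\int H_tD_t^{1/2}u\cdot D_t^{1/2}(u\eta^2)$. Since $H_t$ is skew-adjoint and $D_t^{1/2}$ self-adjoint in $t$, this term vanishes when $\eta$ is constant; for general $\eta$ it reduces, after splitting $D_t^{1/2}(u\eta^2)=\eta^2D_t^{1/2}u+(\text{lower-order terms})$ and symmetrizing $H_t$, to commutator contributions governed by $[D_t^{1/2},\eta^2]$ and $[H_t,\eta^2]$ acting in the time variable. Because $\phi$ is parabolically $C^{1/2}$ in $t$ and $\log\eta=\gamma\phi/\lambda$ has parabolic Lipschitz constant $\lesssim\gamma/\lambda$, these commutators, handled by fractional-Leibniz/Coifman--Meyer type estimates for the non-local operators $D_t^{1/2}$ and $H_t$, contribute at most $C\gamma$ times the $\eta^2$-weighted quantities already appearing, and are absorbed for $\gamma$ small (a standard but technical point; cf. \cite{AEN,AtEN}). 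Alternatively, this time-commutator analysis can be bypassed entirely: one derives the resolvent bound from the $L^2\!\to\!L^2$ Gaffney estimates for the parabolic semigroup $e^{-r\mathcal H_\parallel}$, which are classical for divergence-form operators with bounded measurable coefficients and are proved with a \emph{time-independent} spatial exponential weight (so no half-derivative commutator arises), and then uses the subordination identity $\mathcal E_\lambda=\lambda^{-2}\int_0^\infty e^{-r/\lambda^2}e^{-r\mathcal H_\parallel}\,dr$; integrating the resulting Gaussian-in-$r$ factors against $e^{-r/\lambda^2}$ yields precisely the rate $e^{-cd/\lambda}$, since $d_p(E,F)=d$ forces a spatial separation $\gtrsim d$ whenever $r\lesssim d^2$, while the range $r\gtrsim d^2$ contributes $e^{-cd^2/\lambda^2}\le e^{-cd/\lambda}$.

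Finally, for part (2) set $v:=\mathcal E_\lambda(\divg_\parallel g)$ with $g$ supported in $E$; then $v$ solves the same weak equation with right-hand side $-\int g\cdot\nabla_\parallel w$. Testing with $w=\lambda^2 v\eta^2$ and repeating the estimates above, the new right-hand side $-\lambda^2\int_E g\cdot(\eta^2\nabla_\parallel v+v\,\nabla_\parallel\eta^2)$ is bounded, using $|\nabla_\parallel\eta|\le(\gamma/\lambda)\eta$ and $\eta\le1$ on $E$, by $\tfrac14\lambda\lambda^4\int|\nabla_\parallel v|^2\eta^2+\tfrac12\gamma\lambda^2\int|v|^2\eta^2+C\int_E|g|^2$ (with $\lambda$ here the ellipticity constant); absorbing the first two terms leaves $\lambda^2\int|v|^2\eta^2\lesssim\int_E|g|^2$, and then $\int_F|\lambda v|^2\le e^{-2\gamma d/\lambda}\int|\lambda v|^2\eta^2\lesssim e^{-2\gamma d/\lambda}\int_E|g|^2$, which is the claim.
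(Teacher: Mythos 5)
You should first note that the paper does not actually prove this lemma: it is imported verbatim from Lemma 4.4 of \cite{AEN2}, where it rests on the Gaussian kernel bounds for the real scalar parabolic propagator (the same source as Lemma \ref{lemma:KernelBounds}). Your ``route B'' (Gaffney bounds for the propagator plus the subordination formula $\mathcal{E}_\lambda=\lambda^{-2}\int_0^\infty e^{-r/\lambda^2}e^{-r\mathcal H_\parallel}\,dr$ and causality) is therefore close in spirit to the actual source and is essentially viable, but as written it is loose on the one point that matters: the parabolic distance between $E$ and $F$ does \emph{not} control their spatial distance as subsets of $\R^n\times\R$, so the Gaffney input has to be applied slice-wise through the evolution structure of $e^{-r\mathcal H_\parallel}$ (time translation by $r$ composed with the propagator), giving spatial separation $\gtrsim d$ between the slices $F_t$ and $E_{t-r}$ when $r\lesssim d^2$, while $r\gtrsim d^2$ is handled by $e^{-r/\lambda^2}\le e^{-cd^2/\lambda^2}\le e^{-cd/\lambda}$. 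Moreover route B, as stated, does not produce the $\lambda\nabla_\parallel\mathcal{E}_\lambda$ half of (1) (you need Gaffney bounds for $\sqrt{r}\,\nabla_\parallel e^{-r\mathcal H_\parallel}$ or a Caccioppoli step as in Lemma \ref{L5.10}), nor (2), which is obtained most cleanly from (1) by duality with the backward-in-time adjoint resolvent, exactly as the paper does for the iterated version in Lemma \ref{lemma:OffDiagonalResolvent2}; instead you send (2) back through your route A.

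The genuine gap is in route A, at its central step. You claim that the term $\lambda^2\int H_tD_t^{1/2}u\cdot D_t^{1/2}(u\eta^2)$ reduces to commutators $[D_t^{1/2},\eta^2]$, $[H_t,\eta^2]$ which, ``by fractional-Leibniz/Coifman--Meyer estimates,'' contribute at most $C\gamma$ times the $\eta^2$-weighted quantities. Those estimates are unweighted and require the multiplying function to be bounded with small H\"older/Lipschitz norm; here $\eta^2=e^{2\gamma\phi/\lambda}$ is exponentially large, and the kernels of $D_t^{1/2}$ and $H_t$ decay only polynomially ($|t-s|^{-3/2}$, resp.\ $|t-s|^{-1}$), while the weight ratio over a time gap $|t-s|$ can grow like $e^{c\gamma|t-s|^{1/2}/\lambda}$; the resulting far-field contribution $\int e^{c\gamma|t-s|^{1/2}/\lambda}|t-s|^{-3/2}\,ds$ is not even finite, so no smallness in $\gamma$ can be extracted this way without exploiting causality --- exponential-weight arguments survive for the second-order part only because that operator is local. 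The correct repair is to avoid the half-derivative formulation altogether: since $u=\mathcal{E}_\lambda f$ solves the resolvent equation, $\lambda^2\langle\partial_t u,\,u\eta^2\rangle=-\tfrac{\lambda^2}{2}\int u^2\,\partial_t(\eta^2)$ (justified by Steklov averaging), and if you mollify $\min\{d_p(\cdot,E),d\}$ at parabolic scale $\lambda$, so that $|\nabla_\parallel\phi|\lesssim1$ \emph{and} $|\partial_t\phi|\lesssim\lambda^{-1}$, then $\lambda^2|\partial_t(\eta^2)|\lesssim\gamma\,\eta^2$ and this term is absorbed by the zeroth-order term $\int u^2\eta^2$ that the resolvent itself supplies; this is precisely where the parabolic rate $e^{-cd/\lambda}$ (including separation purely in time) comes from. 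Two further small points: the absorptions require truncating the exponential weight to guarantee a priori finiteness of the weighted integrals, and in your treatment of (2) the symbol $\lambda$ is used simultaneously for the resolvent parameter and the ellipticity constant, which should be disentangled.
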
    

We begin the proof of Lemma \ref{lemma:CarelsonBound}.
\begin{proof}    
We use induction in $m$. When $m=1$ the estimate \eqref{eq:CarlesonFctBound} claims that 
$$\Vert C(\lambda \mathcal{E}_{\lambda}\partial_jg)\Vert_{L^\infty}\lesssim \Vert g \Vert_{L^\infty},$$
which was proven in Lemma 8.3 of \cite{AEN2} for the special case of weight $w=1$.\vglue1mm

Assume now that we have already established the estimate for $m-1$, i.e.,
$$\Vert C(\lambda \mathcal{E}^{m-1}_{\lambda}\partial_jg)\Vert_{L^\infty}\lesssim \Vert g \Vert_{L^\infty}.$$
We shall establish this estimate for $m$.\vglue1mm

Fix an arbitrary boundary ball \(\Delta=\Delta(y,s)\subset \R^n\times\R\).  Since it is more convenient to use rectangular Carleson regions we set $T(\Delta)=\Delta\times (0,r(\Delta))$, where $r=r(\Delta)$ is the radius of the ball.   Then
 \begin{multline*}
        \frac{1}{\sigma(\Delta)}\iint_{T(\Delta)}\frac{|\lambda\mathcal{P}_{\lambda,m}\partial_j g|^2}{\lambda}dxdtd\lambda=\frac{1}{\sigma(\Delta)}\iint_{T(\Delta)}\frac{|\mathcal{E}_\lambda(\lambda\mathcal{E}_\lambda^{m-1}\partial_j g)|^2}{\lambda}dxdtd\lambda
        \\
        =\frac{1}{\sigma(\Delta)}\int_0^{r(\Delta)}\lambda\left[\int_\Delta |\mathcal{E}_\lambda(\chi_{\Delta_{2r}}\mathcal{E}_\lambda^{m-1}\partial_j g)|^2dxdt+ \int_{\Delta}|\sum_{j\geq 1}\mathcal{E}_\lambda(\chi_{\Delta_{2^{j+1}r}\setminus\Delta_{2^jr}}\mathcal{E}_\lambda^{m-1}\partial_j g)|^2 dxdt\right]d\lambda
        \\
        =:\frac{1}{\sigma(\Delta)}\int_0^{r(\Delta)} \lambda(I_\lambda+II_\lambda) d\lambda.
    \end{multline*}
    Here $\Delta_{2^{j+1}r}$, $j=1,2,\dots$, are concentric parabolic balls with the same center as $\Delta$.
    
    For the first term $I_\lambda$ we use that \(\mathcal{E}_\lambda\) is bounded on $L^2$.

    This can be seen easily. For $u=\mathcal E_\lambda g$ we need to solve the PDE
    $$\partial_tu-\divg_\parallel(A_\parallel(x,\lambda,t)\nabla_\parallel u)+\frac{u}{\lambda^2}=\frac{g}{\lambda^2},$$
    for which after multiplying by $u$ and integrating by parts gives  us that
\begin{equation}\label{E-bound}
\|\lambda\nabla_\parallel\mathcal E_\lambda g\|^2_{L^2} +   \|\mathcal E_\lambda g\|^2_{L^2}\lesssim
\int_{\R^n\times\R} \left[\lambda^2A_\parallel\nabla_\parallel u\cdot\nabla_\parallel u+u^2\right]dxdt \lesssim C\|g\|^2_{L^2}.
\end{equation}
It follows that    
    \[I_\lambda\lesssim \int_{2\Delta} |\mathcal{E}_\lambda^{m-1}\partial_j g|^2dxdt,\]
    and therefore
    \[\frac{1}{\sigma(\Delta)}\int_0^{r(\Delta)} \lambda I_\lambda d\lambda\lesssim \frac{1}{\sigma(\Delta)}\iint_{T(2\Delta)} \frac{|\lambda\mathcal{E}_\lambda^{m-1}\partial_j g|^2}{\lambda}dxdtd\lambda\lesssim C(|\lambda\mathcal{E}_\lambda^{m-1}\partial_j g|)^2(y,s),\]
which is bounded by $\|g\|^2_{L^\infty}$ by the induction assumption.\vglue1mm

For the second term $II_\lambda$ we can use Minkowski's inequality, 
 the off-diagonal estimate of Lemma \ref{lemma:OffDiagonalResolvent} and H\"older's inequality to obtain
 \begin{multline*}
        (II)_\lambda^{1/2}=\Big(\int_{\Delta}\big|\sum_{j\geq 1}\mathcal{E}_\lambda(\chi_{\Delta_{2^{j+1}r}\setminus\Delta_{2^{j}r}}\mathcal{E}_\lambda^{m-1}\partial_j g)\big|^2dxdt\Big)^{1/2}
        \\
        \le\sum_{j\geq 1}\Big(\int_{\Delta}|\mathcal{E}_\lambda(\chi_{\Delta_{2^{j+1}r}\setminus\Delta_{2^{j}r}}\mathcal{E}_\lambda^{m-1}\partial_j g)|^2dxdt\Big)^{1/2}
        \\
        \lesssim\sum_{j\geq 1}e^{-c\frac{2^{j}r(\Delta)}{\lambda}}\Big(\int_{\Delta_{2^{j+1}r}\setminus\Delta_{2^{j}r}}|\mathcal{E}_\lambda^{m-1}\partial_j g|^2dxdt\Big)^{1/2}
        \\
        \lesssim\Big(\sum_{j\geq 1}e^{-c\frac{2^{j}r(\Delta)}{\lambda}}\Big)^{1/2}\Big(\sum_{j\geq 1}e^{-c\frac{2^{j}r(\Delta)}{\lambda}}\int_{\Delta_{2^{j+1}r}\setminus\Delta_{2^{j}r}}|\mathcal{E}_\lambda^{m-1}\partial_j g|^2dxdt\Big)^{1/2}.
    \end{multline*}
    Since \(\lambda\in (0,r(\Delta))\), we have \(e^{-c\frac{2^{j}r(\Delta)}{\lambda}}\leq e^{-c2^j}\) and \(\sum_{j\geq 1}e^{-c\frac{2^{j}r(\Delta)}{\lambda}}\) is uniformly bounded independent of \(\lambda\) and \(\Delta\). Thus, we can continue with estimating
    \begin{multline*}
        \frac{1}{\sigma(\Delta)}\int_0^{r(\Delta)} \lambda II_\lambda d\lambda
        \lesssim \frac{1}{\sigma(\Delta)}\sum_{j\geq 1}e^{-c2^{j}}\iint_{(\Delta_{2^{j+1}r}\setminus \Delta_{2^{j}r})\times(0,r(\Delta))} \frac{|\lambda\mathcal{E}_\lambda^{m-1}\partial_j g|^2}{\lambda}dxdtd\lambda
        \\
        \lesssim \sum_{j\geq 1}2^{(j+1)(n+2)}e^{-c2^{j}}\frac{1}{\sigma(\Delta_{2^{j+1}r})}\iint_{T(\Delta_{2^{j+1}r})} \frac{|\lambda\mathcal{E}_\lambda^{m-1}\partial_j g|^2}{\lambda}dxdtd\lambda
        \\
        \lesssim \sum_{j\geq 1}2^{(j+1)(n+2)}e^{-c2^{j}}C(\lambda\mathcal{E}_\lambda^{m-1}\partial_j g)(y,s)
        \lesssim C(\lambda\mathcal{E}_\lambda^{m-1}\partial_j g)^2(y,s),
    \end{multline*}
which is bounded by $\|g\|^2_{L^\infty}$, again by the induction assumption. After taking supremum over all boundary balls $\Delta$ this then yields
 \begin{align*}
        \Vert C(\lambda\mathcal{P}_{\lambda,{m}}\partial_j g)\Vert_{L^\infty}= \Vert C(\lambda\mathcal{E}_\lambda^{m}\partial_j g)\Vert_{L^\infty}\lesssim\Vert C(\lambda\mathcal{E}_\lambda^{m-1}\partial_j g)\Vert_{L^\infty}\lesssim \Vert g\Vert_{L^\infty}.
    \end{align*} 
\end{proof}

We also note that the off-diagonal estimates of Lemma \ref{lemma:OffDiagonalResolvent} hold for $\mathcal P_{\lambda,m}$ for all $m\ge 1$. Observations of this sort have been made in various contexts (see, for example, Lemma 4.6 of \cite{AE}); for convenience, we state and prove it below.

\begin{lemma}\label{lemma:OffDiagonalResolvent2}
    Let \(E,F\subset\R^n\times\R\) two measurable sets with their parabolic distance \(d:=\mathrm{dist}(E,F)>0\). Then there exists \(C>0\) such that
    \begin{enumerate}
        \item \(\displaystyle\int_F |\mathcal{P}_\lambda f|^2 + |\lambda \nabla_{||}\mathcal{P}_\lambda f|^2 dxdt\lesssim e^{-c\frac{d}{\lambda}}\int_E |f|^2dxdt, \) and
        \item \(\displaystyle\int_F |\lambda\mathcal{P}_\lambda \mathrm{div}_\parallel(g)|^2 dxdt\lesssim e^{-c\frac{d}{\lambda}}\int_E |g|^2dxdt \)
    \end{enumerate}
    for all \(f\in L^2(\mathbb{R}^{n+1}), g\in L^2(\mathbb{R}^{n+1})^n\) supported on \(E\).
\end{lemma}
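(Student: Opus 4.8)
The plan is to induct on the integer $m\ge 1$, using the single–resolvent off–diagonal bounds of Lemma \ref{lemma:OffDiagonalResolvent} as the base case and a standard ``buffer region'' splitting for the inductive step. Throughout we exploit that $d_p$ is equivalent to the genuine metric $\rho((x,t),(y,s)):=|x-y|+|t-s|^{1/2}$, so that a sublevel set of $\rho(\cdot,F)$ behaves like a true tubular neighbourhood; the price of passing between $d_p$ and $\rho$ is only a change of the absolute constant $c$ in the exponent, which is harmless.

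For the base case $m=1$, $\mathcal P_{\lambda,1}=\mathcal E_\lambda$ and the claim is exactly Lemma \ref{lemma:OffDiagonalResolvent}. For the inductive step, assume the two estimates hold with $\mathcal P_{\lambda,m-1}$ in place of $\mathcal P_{\lambda,m}$. Fix measurable $E,F$ with $d:=\mathrm{dist}(E,F)>0$, put $F':=\{(x,t): d_p((x,t),F)<d/2\}$, and note that by the (quasi-)triangle inequality $\mathrm{dist}(E,F')\gtrsim d$ and $\mathrm{dist}(F,(F')^c)\gtrsim d$. Factor $\mathcal P_{\lambda,m}=\mathcal E_\lambda\circ\mathcal P_{\lambda,m-1}$; for $f$ supported in $E$ set $g:=\mathcal P_{\lambda,m-1}f$ and write
\[
\mathcal P_{\lambda,m}f=\mathcal E_\lambda(\chi_{F'}g)+\mathcal E_\lambda(\chi_{(F')^c}g).
\]
For the first term, the $L^2$-boundedness of $\mathcal E_\lambda$ (this is \eqref{E-bound}) gives $\int_F|\mathcal E_\lambda(\chi_{F'}g)|^2\le\|\mathcal E_\lambda(\chi_{F'}g)\|_{L^2}^2\lesssim\int_{F'}|g|^2$, and the inductive hypothesis applied to the pair $(E,F')$ bounds this by $e^{-cd/\lambda}\int_E|f|^2$. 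For the second term, $\chi_{(F')^c}g$ is supported in $(F')^c$, which lies at distance $\gtrsim d$ from $F$, so Lemma \ref{lemma:OffDiagonalResolvent}(1) yields $\int_F|\mathcal E_\lambda(\chi_{(F')^c}g)|^2\lesssim e^{-cd/\lambda}\int_{(F')^c}|g|^2\le e^{-cd/\lambda}\|\mathcal P_{\lambda,m-1}f\|_{L^2}^2\lesssim e^{-cd/\lambda}\int_E|f|^2$, using the $L^2$-boundedness of $\mathcal P_{\lambda,m-1}=\mathcal E_\lambda^{m-1}$. Adding the two contributions and relabelling $c$ proves the bound for $\int_F|\mathcal P_\lambda f|^2$. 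The quantity $\int_F|\lambda\nabla_{||}\mathcal P_\lambda f|^2$ is treated identically: for the near piece one uses the $\|\lambda\nabla_{||}\mathcal E_\lambda\cdot\|_{L^2}\lesssim\|\cdot\|_{L^2}$ half of \eqref{E-bound} followed by the inductive hypothesis for $(E,F')$, and for the far piece one uses the $\lambda\nabla_{||}\mathcal E_\lambda$ term in Lemma \ref{lemma:OffDiagonalResolvent}(1) followed by the $L^2$-bound on $\mathcal P_{\lambda,m-1}$.

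The divergence-form estimate (2) follows the same pattern. For $g$ (vector-valued) supported in $E$ set $w:=\lambda\mathcal P_{\lambda,m-1}\mathrm{div}_{||}(g)$, so that $\lambda\mathcal P_{\lambda,m}\mathrm{div}_{||}(g)=\mathcal E_\lambda w$, and split $w=\chi_{F'}w+\chi_{(F')^c}w$. The near piece is controlled by $L^2$-boundedness of $\mathcal E_\lambda$ together with the inductive hypothesis (2) for the pair $(E,F')$; the far piece is controlled by Lemma \ref{lemma:OffDiagonalResolvent}(1) (applied to $((F')^c,F)$) together with the global bound $\|\lambda\mathcal P_{\lambda,m-1}\mathrm{div}_{||}(g)\|_{L^2}\lesssim\|g\|_{L^2}$, which is Lemma \ref{lemma:OffDiagonalResolvent}(2) with $E=F=\R^n\times\R$ composed with $L^2$-boundedness of $\mathcal E_\lambda^{\,m-2}$.

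There is no genuinely hard step; the points requiring a little care are (i) verifying that the buffer region $F'$ sits at distance comparable to $d$ from both $E$ and $F$, which for the quasi-metric $d_p$ costs only a change in the absolute constant $c$, and (ii) bookkeeping the auxiliary $L^2$-boundedness facts, all of which reduce to \eqref{E-bound} and the base case by taking degenerate choices of $E,F$ and composing with powers of $\mathcal E_\lambda$. The estimate is of course only informative when $\lambda\lesssim d$; for $\lambda\gtrsim d$ it is just plain $L^2$-boundedness, and the factor $e^{-cd/\lambda}$ absorbs both regimes automatically. \ep
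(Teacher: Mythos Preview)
Your proof is correct and follows essentially the same buffer-region induction as the paper: define the $d/2$-neighbourhood of $F$, split the intermediate function via the characteristic function of this neighbourhood, and use $L^2$-boundedness plus the inductive hypothesis on the near piece and the single-resolvent off-diagonal decay plus global $L^2$-boundedness on the far piece. The only minor differences are that the paper obtains (2) from (1) by duality rather than by repeating the splitting argument, and it handles the $\lambda\nabla_{||}\mathcal P_\lambda f$ term by invoking the Caccioppoli inequality (Lemma \ref{L5.10}) to reduce to the scalar case instead of carrying the gradient through the splitting directly; your route is slightly more self-contained but amounts to the same computation.
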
    

\begin{proof} Observe that in the regime $d\lesssim \lambda$ the right-hand side of (1) contains no decay factor and the claim reduces to a uniform $L^2$ bound. The estimate \eqref{E-bound} provides this bound for the operator $\mathcal E_\lambda$ as well as for $\lambda\nabla_\parallel \mathcal E_\lambda$. Writing
$$\mathcal P_\lambda f=\mathcal P_{\lambda,m}f=\mathcal E_\lambda \mathcal P_{\lambda,m-1}f=\mathcal E_\lambda g,\qquad
g:=\mathcal P_{\lambda,m-1}f,$$
applying \eqref{E-bound} once yields
\begin{equation}\label{eqsas}
\|\mathcal P_{\lambda,m}f\|_{L^2(F)}+\|\lambda\nabla_\parallel\mathcal P_{\lambda,m}f\|_{L^2(F)}\lesssim \|\mathcal P_{\lambda,m-1}f\|_{L^2}.
\end{equation}
Iterating this peels off one factor of $\mathcal E_\lambda$ at a time, i.e., for each $k$:
$\|\mathcal P_{\lambda,m-k}f\|_{L^2}\lesssim \|\mathcal P_{\lambda,m-k-1}f\|_{L^2}$,
and hence by induction $\|\mathcal P_{\lambda,m-1}f\|_{L^2}$ is eventually controlled by $\|f\|_{L^2(E)}$. In particular, both summands $|\mathcal P_\lambda f|^2$ and $|\lambda\nabla_\parallel \mathcal P_\lambda f|^2$ appearing in claim~(1) are controlled by $\|f\|_{L^2(E)}$ thanks to \eqref{eqsas}.

The estimate (2) follows from (1) by duality, hence it suffices to prove (1) in the case $d\gg\lambda$.
 For the second summand in the integral involving $\lambda\nabla_\parallel \mathcal P_\lambda f$ we observe that Lemma \ref{L5.10} allows to remove $\lambda\nabla_\parallel$
from the operator at the expense of slightly enlarging the set, which then reduces the claim to the case of 
$\mathcal P_\lambda f$. Thus it suffices to prove the part of statement (1) for the first integrand.\vglue1mm

The proof proceeds by induction on $m$, and when $m=1$ there is nothing left to prove. 

Assume now that our lemma already holds for $\mathcal P_\lambda=\mathcal E_\lambda^{m-1}$ and that $E$ and $F$ are as in the statement of this lemma. 
Consider an enlargement of the set $F$, defined by
$$\widetilde{F}=\bigcup_{y\in F}\Delta_{d/2}(y),$$
where $\Delta_{d/2}(y)$ are parabolic balls of radius $d/2$ centered at $y$. It follows that \(\mathrm{dist}(E,\widetilde F)>d/2\).
Then 

$$\int_F |\mathcal{E}^m_\lambda f|^2 dxdt\lesssim
\int_F |\mathcal{E}_\lambda\chi_{\widetilde F}\mathcal{E}^{m-1}_\lambda f|^2 dxdt
+\int_F |\mathcal{E}_\lambda\chi_{\R^{n+1}\setminus \widetilde F}\mathcal{E}^{m-1}_\lambda f|^2 dxdt
$$
$$\lesssim \int_{\widetilde{F}} |\mathcal{E}^{m-1}_\lambda f|^2 dxdt+
e^{-c\frac{d}{2\lambda}}\int_{\R^{n+1}\setminus \widetilde{F}}|\mathcal{E}^{m-1}_\lambda f|^2dxdt.
$$
Here we have used uniform $L^2$ boundedness of $\mathcal{E}_\lambda$ shown in \eqref{E-bound} for the first term
and the Lemma \ref{lemma:OffDiagonalResolvent} for the second term. 
Now we see that we can use the induction assumption for $m-1$ to handle the first term and repeated use of  
$L^2$ boundedness of $\mathcal{E}_\lambda$ for the second one. This yields to 
$$\int_F |\mathcal{E}^m_\lambda f|^2 dxdt\lesssim e^{-c\frac{d}{2\lambda}}\int_{E}|f|^2dxdt+e^{-c\frac{d}{2\lambda}}\int_{\R^{n+1}}|f|^2dxdt=2e^{-c\frac{d}{2\lambda}}\int_{E}|f|^2dxdt,
$$
since $f$ is supported on $E$.
\end{proof}

\section{Completing the estimate of \eqref{list}}

It remains to prove the bound stated in point 4) of \eqref{list}, which we do in this section.

\subsection{The averaging operator $\mathfrak{A}_\lambda$}
We start by revisiting \eqref{def-frak} where we have defined  the operator $\mathfrak{A}_\lambda$.

\noindent\textbf{Motivation.} The averaging operator $\mathfrak{A}_\lambda$ is introduced in order to extract a cancellation mechanism from the commutator
$\tilde{\mathcal P}^*_\lambda \partial_j(a_{n+1,j}\,\cdot) - (\tilde{\mathcal P}^*_\lambda \partial_j a_{n+1,j})\cdot$
appearing inside item~4) of \eqref{list}. Applied to a function $v$ that solves the adjoint equation, $\mathfrak{A}_\lambda v$ behaves, at scale~$\lambda$, like a slowly-varying ``frozen'' version of $v$, and Lemma~\ref{lemma:IminusPBoundedByS(v)} below quantifies precisely the extent to which $v - \mathfrak{A}_\lambda v$ is controlled by $S(v)$. The commutator then splits into (i) a piece in which the coefficient oscillation is paired against $(I-\mathfrak{A}_\lambda)v$ (controlled by the square function of $v$), and (ii) a piece in which $\mathfrak{A}_\lambda v$ is treated as essentially constant on $\lambda$-scale cubes (controlled via a Carleson-measure bound on the singular kernel). The temporal averaging estimate of Lemma~\ref{l8.5} below is the analytic input needed for the time-component of step (i): it is precisely the estimate that makes the temporal contribution to $v-\mathfrak{A}_\lambda v$ summable in the dyadic decomposition used later.

\begin{lemma}\label{lemma:IminusPBoundedByS(v)}
    For any function \(v\in W^{1,2}_{loc}(\R^{n+1}_+\times\R)\) with \(\mathcal{H}^*v=\divg(\vec{h})\) and \(1\leq p <\infty\) we have
    \[\Vert\mathcal{A}[(I-\mathfrak{A}_{\lambda})v]\Vert_{L^p(\mathbb{R}^{n+1})}\lesssim \Vert S(v)\Vert_{L^p(\mathbb{R}^{n+1})} + \Vert\mathcal{A}[\lambda \vec{h}]\Vert_{L^p(\mathbb{R}^{n+1})}.\]
\end{lemma}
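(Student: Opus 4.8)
The plan is to bound the difference $(I-\mathfrak{A}_\lambda)v$ pointwise inside a nontangential cone by a controlled integral of $\nabla v$ over a slightly larger region, using a Poincaré-type inequality adapted to the parabolic scaling, and then to recover the time derivative of $v$ via the equation $\mathcal{H}^*v = \divg(\vec h)$. First I would note that for a fixed $(x,\lambda,t)$, the quantity $\mathfrak{A}_\lambda v(x,\lambda,t)$ is the average of $v$ over the boundary cube $Q_\lambda(x,t)$ at the fixed height $\lambda$, so $(I-\mathfrak{A}_\lambda)v(x,\lambda,t)$ measures the oscillation of $v(\cdot,\lambda,\cdot)$ over $Q_\lambda(x,t)$ together with how $v$ varies in $\lambda$ — but actually the averaging is only in $(x,t)$, so the only oscillation that enters is in the tangential and time variables. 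By the parabolic Poincaré inequality on $Q_\lambda(x,t)$ (one spatial derivative costs one power of $\lambda$, one half time derivative costs one power of $\lambda$), one gets
\[
|(I-\mathfrak{A}_\lambda)v(x,\lambda,t)| \lesssim \lambda \br{\fiint_{Q_\lambda(x,t)} |\nabla_\parallel v|^2}^{1/2} + \lambda \br{\fiint_{Q_\lambda(x,t)} |D_t^{1/2} v|^2}^{1/2},
\]
or more precisely a form where the time oscillation is handled by $\partial_t v$ at the cost of $\lambda^2$. The spatial part is already of the right shape: $\lambda \nabla_\parallel v$ is part of $\lambda\nabla v$, so its area integral is controlled by $S(v)$.

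The main obstacle is the time-oscillation term: $(I - \mathfrak{A}_\lambda)$ in the $t$-variable produces something like $\lambda^2 \partial_t v$ after a Poincaré step, and $\partial_t v$ is not directly controlled by $S(v)$. Here I would use the equation: $\partial_t v = -\divg(A^*\nabla v) - \divg(\vec h)$ (reading off $\mathcal{H}^* v = -\partial_t v - \divg(A^*\nabla v) = \divg \vec h$). One cannot simply substitute this pointwise because of the derivative, but the correct move is to keep the averaging operator and integrate by parts \emph{inside} the average: write the time-oscillation contribution to $(I-\mathfrak{A}_\lambda)v$ as a double integral of $\partial_s v(y,s')$ over $s,s' \in $ the time interval of $Q_\lambda(x,t)$, and then there one can legitimately insert $\partial_s v = -\divg_\parallel(A^*_\parallel \nabla_\parallel v) - \sum a^*_{i,n+1}\partial_\lambda(\partial_i v) - \divg \vec h$ (using transversal independence of $A$, so the $\lambda$-derivative hits only $v$). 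After moving the tangential divergence onto a test function / using that we are integrating against the fixed-sign averaging kernel, each resulting term is either of the form $\lambda \nabla v$ (hence $\lesssim S(v)$ after the area integral) or $\lambda \vec h$ (hence the second term on the right-hand side). The term with $\partial_\lambda \partial_i v$ requires one more integration by parts in $\lambda$, trading $\lambda^2 \partial_\lambda \nabla_\parallel v$ for $\lambda \nabla_\parallel v$ plus boundary-in-$\lambda$ terms that vanish or are absorbed; this is the delicate bookkeeping step.

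Concretely, the steps in order are: (1) fix $(y,s)\in\partial\Omega$ and a point $(x,\lambda,t)\in\Gamma(y,s)$, and decompose $(I-\mathfrak{A}_\lambda)v$ into a ``spatial'' oscillation piece and a ``time'' oscillation piece; (2) estimate the spatial piece by the parabolic Poincaré inequality, giving $\lambda(\fiint_{cQ_\lambda}|\nabla_\parallel v|^2)^{1/2}$, whose area integral is $\lesssim \|S(v)\|_{L^p}$ by Fubini and the standard comparison of averaged cone integrals; (3) for the time piece, write it as an average of $\int \partial_s v\,ds$ and substitute the equation, using the $x_{n+1}$-independence of $A$ to ensure $\partial_\lambda$ falls only on $v$; (4) integrate by parts in the tangential variables within the average and in $\lambda$ where needed, collecting terms controlled by $\lambda\nabla v$ (hence $S(v)$) and by $\lambda \vec h$; (5) take the $L^p$ norm of the resulting pointwise bound for $\mathcal{A}[(I-\mathfrak{A}_\lambda)v]$ and conclude. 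The hard part is step (4): keeping track of the extra powers of $\lambda$ and verifying that all boundary contributions in the integration by parts either vanish (by the decay/vanishing properties of $v$ and $\vec h$) or are reabsorbed into $S(v)$ and $\mathcal{A}[\lambda\vec h]$.
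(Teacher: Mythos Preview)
Your overall strategy is correct and matches the paper's: split $(I-\mathfrak{A}_\lambda)v$ into a spatial-oscillation piece controlled by Poincar\'e and a time-oscillation piece controlled by writing $\partial_s v$ via the equation $\mathcal H^*v=\divg\vec h$. The gap is in step (4), and it is exactly the point you flag as ``delicate.'' The averaging kernel of $\mathfrak{A}_\lambda$ is the characteristic function of the cube $Q_\lambda(x,t)$, so ``moving the tangential divergence onto the averaging kernel'' produces a surface term on the faces of $Q_\lambda$ rather than a bulk term with a bounded gradient; that surface term is not controlled by $\fiint|\nabla v|^2$ or $\fiint|\vec h|^2$. Likewise, the $\partial_\lambda$ components of $\divg(A^*\nabla v)$ and $\divg\vec h$ cannot be integrated by parts inside an average taken only over $(x,t)$, and your proposed extra integration by parts in $\lambda$ (outside the average) does not straightforwardly produce a term of the form $\lambda\nabla v$.

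The paper repairs both issues with a single device: it thickens the cone slice to a solid Whitney region $\Gamma^\lambda(z,\tau)\times(\lambda/2,\lambda)$ (so the $\lambda$-direction is now part of the spatial integration), and it replaces the sharp averages by averages against a smooth cutoff $\eta$ in \emph{all} spatial variables. The key estimate (Lemma~\ref{l8.5}) is then obtained by multiplying $\mathcal H^*v=\divg\vec h$ by $\eta$ and integrating over $\R^{n+1}\times[s,t]$; integration by parts in space now lands on $\nabla\eta=O(r^{-1})$ and yields
\[
\sup_{s,t}|\langle v_\eta\rangle(s)-\langle v_\eta\rangle(t)|\lesssim r\Big(\fiint_{2Q}|\nabla v|^2+|\vec h|^2\Big)^{1/2}.
\]
The passage between the smooth averages $\langle v_\eta\rangle$ and the sharp ones costs only additional $\lambda\nabla v$ terms via Poincar\'e. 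With this lemma in hand the rest of your outline (steps (1), (2), (5)) goes through essentially as you describe.
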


\begin{proof} 
Fix a boundary point $(z,\tau)$ and consider its nontangential cone
$\Gamma(z,\tau)$. Recall the definition of the set $\Gamma^\lambda(z,\tau)$ by \eqref{eq-GL} - the slice at level 
$\lambda$ of the cone $\Gamma(z,\tau)$.

Without loss of generality, we can assume that the sets 
$\Gamma^\lambda(z,\tau)$ are parabolic cubes in the variables $(y,s)$ - if not we may enlarge the cones $\Gamma$ to make it so. This will just make the area function $\mathcal A$ on the enlarged cone bigger.
Then
\begin{equation}\label{eq8.22}
\mathcal{A}[(I-\mathfrak{A}_{\lambda})v](z,\tau)=\left(\int_0^\infty\lambda^{-1}\fint_{\Gamma^\lambda(z,\tau)}
|v-\mathfrak{A}_{\lambda}v|^2dyds\,d\lambda\right)^{1/2}.
\end{equation}
Since $\mathfrak{A}_{\lambda}v(y,\lambda,s)=\fint_{Q_\lambda(y,s)} v(y',\lambda,s')\,dy'\,ds'$,
it would be useful to know how this average compares to the average $\langle v\rangle_{\Gamma^\lambda(z,\tau)}$ of $v$ on the set $\Gamma^\lambda(z,\tau)$. We focus first on the difference
\begin{equation}\nonumber
\mathcal{A}(v-\langle v\rangle_{\Gamma^\lambda(z,\tau)})(z,\tau)=\left(\int_0^\infty\lambda^{-1}\fint_{\Gamma^\lambda(z,\tau)}
|v-\langle v\rangle_{\Gamma^\lambda(z,\tau)}|^2dyds\,d\lambda\right)^{1/2}.
\end{equation}
It is useful to integrate over the set of all points \((x,\tilde{\lambda},t)\), where \((x,t)\in \Gamma^\lambda(z,\tau)\) and \(\tilde{\lambda}\in (\lambda/2,\lambda)\), to make the interior integral solid using the Fubini's theorem. Abusing the notation a bit (since $\tilde\lambda$ is not the last variable if the vector \((x,\tilde{\lambda},t)\)), we denote this set as
$\Gamma^\lambda(z,\tau)\times (\lambda/2,\lambda)$. This will only make the bound bigger, that is
\begin{equation}\nonumber
\mathcal{A}(v-\langle v\rangle_{\Gamma^\lambda(z,\tau)})(z,\tau)\lesssim \left(\int_0^\infty\lambda^{-1}\fiint_{\Gamma^\lambda(z,\tau)\times (\lambda/2,\lambda)}
|v-\langle v\rangle_{\Gamma^{\tilde\lambda}(z,\tau)}|^2d\tilde\lambda dyds\,d\lambda\right)^{1/2}.
\end{equation}
Since for different values of $\tilde\lambda$ the averages are related via Poincar\'e's inequality in the $\lambda$-variable, this then leads to the estimate,
\begin{multline}\label{eq8.3}
\mathcal{A}(v-\langle v\rangle_{\Gamma^\lambda(z,\tau)})(z,\tau)\lesssim \left(\int_0^\infty\lambda^{-1}\fiint_{\Gamma^\lambda(z,\tau)\times (\lambda/2,\lambda)}
|v-\langle v\rangle_{\Gamma^\lambda(z,\tau)\times (\lambda/2,\lambda)}|^2d\tilde\lambda dyds\,d\lambda\right)^{1/2}
\\+\left(\int_0^\infty\lambda^{-1}\fiint_{\Gamma^\lambda(z,\tau)\times (\lambda/2,\lambda)}\lambda^2|\partial_\lambda v|^2d\tilde\lambda dyds\,d\lambda
\right)^{1/2}.
\end{multline}
Clearly the second term here is bounded by $S(v)(z,\tau)$.

To slightly shorten the notation, for a fixed time $s$ we denote by
$\tilde{v}^\lambda(s)$ the averages of $v$ over the level set where $s$ is constant:
$$\Gamma^\lambda(z,\tau)\times (\lambda/2,\lambda)\cap \{(y,\tilde{\lambda},s):\,(y,\tilde{\lambda})\in\R^{n+1}_+ \}.$$
If, in the first term on the righthand side of \eqref{eq8.3} the quantity
$\langle v\rangle_{\Gamma^\lambda(z,\tau)\times (\lambda/2,\lambda)}$ is replaced by 
$\tilde{v}^\lambda(s)$, then since $s$ is fixed we may use Poincare's inequality in the spatial variables to get 
 another term bounded by  $S(v)(z,\tau)$. Hence 
\begin{multline}\label{eq8.4}
\mathcal{A}(v-\langle v\rangle_{\Gamma^\lambda(z,\tau)})(z,\tau)\lesssim 
S(v)(z,\tau)\\+\left(\int_0^\infty\lambda^{-1}\fiint_{\Gamma^\lambda(z,\tau)\times (\lambda/2,\lambda)}
|\tilde{v}^\lambda(s)-\langle v\rangle_{\Gamma^\lambda(z,\tau)\times (\lambda/2,\lambda)}|^2d\tilde\lambda dyds\,d\lambda\right)^{1/2}.
\end{multline}

To handle the second term we shall prove the following lemma:

\begin{lemma}\label{l8.5} Let $Q$ be an interior parabolic cube such that $3Q\subset \R^{n+1}_+\times \R$.
For simplicity assume that we can write $Q$ as $\mathcal B\times (t_0-r^2,t_0+r^2)$.
We introduce the following averages in time using a smooth cutoff function. 
Let $\eta$ be a smooth nonnegative cutoff
function in spatial variables, supported in $2\mathcal B$ and satisfying $\eta=1$ on $\mathcal B$, with $|\nabla\eta|\lesssim r^{-1}$. For such $\eta$ and some $s\in\R$ consider the averages
$$\langle{v}_{\eta}\rangle(s)=c(\eta)\iint_{2\mathcal B}v(X,s)\eta(X)\,dX,$$
where $c(\eta)=(\int_{2\mathcal B}\eta)^{-1}\sim r^{-n-1}$.
Then we have
\begin{equation}\label{eq-zx}
\sup_{s,t\in (t_0-r^2,t_0+r^2)}|\langle{v}_{\eta}\rangle(s)-\langle{v}_{\eta}\rangle(t)|\lesssim r\left(\fiint_{2Q}\left[|\nabla v|^2+|\vec h|^2\right]\right)^{1/2}.
\end{equation}
\end{lemma}
With this in hand for $Q=\Gamma^\lambda(z,\tau)\times (\lambda/2,\lambda)$
 the last term of \eqref{eq8.4}  is handled as follows. We write
 \begin{multline}\label{eq-3}
|\tilde{v}^\lambda(s)-\langle v\rangle_{\Gamma^\lambda(z,\tau)\times (\lambda/2,\lambda)}|^2\lesssim
|\tilde{v}^\lambda(s)-\langle{v}_{\eta}\rangle(s)|^2+|\langle{v}_{\eta}\rangle(s)-\langle\langle{v}_{\eta}\rangle\rangle_s|^2\\+|\langle\langle{v}_{\eta}\rangle\rangle_s-
\langle v\rangle_{\Gamma^\lambda(z,\tau)\times (\lambda/2,\lambda)}|^2,
\end{multline}
where we used the shortcut $\langle\langle{v}_{\eta}\rangle\rangle_s$ to denote the average in time of $\langle{v}_{\eta}\rangle(s)$.
The first and last term of the expression above are handled by the Poincar\'e inequality in spatial variables only as it is easy to see that
$$|\tilde{v}^\lambda(s)-\langle {v_\eta}\rangle(s)|\lesssim r \fiint_{2\mathcal B}|\nabla v(X,s)|dX.$$ 
Therefore, squaring and integrating in $s$ gives the quantity
$$ r^2 \fiint_{2Q}|\nabla v|^2dXd\tau.$$
For the middle term of \eqref{eq-3} we use the claim of the lemma to obtain the square of the righthand side of \eqref{eq-zx}. Putting all terms together, we get that
\begin{multline}\label{eq8.6}
\mbox{The last term of \eqref{eq8.4}}\\\lesssim 
S_b(v)(z,\tau)+\left(\int_0^\infty\lambda^{-1}\fiint_{2\Gamma^\lambda(z,\tau)\times (\lambda/4,2\lambda)}
|\lambda \vec{h}|^2 d\tilde\lambda dyds\,d\lambda\right)^{1/2}
\\
\lesssim S_b(v)(z,\tau)+\left(\int_{\Gamma_{b}(z,\tau)}|\lambda \vec{h}|^2\lambda^{-n-3}d\lambda\,dx\,dt\right)^{1/2}
=S_b(v)(z,\tau)+\mathcal A_b(\lambda \vec{h})(z,\tau).
\end{multline}
Here the cone ${\Gamma_{b}(z,\tau)}$ has an enlarged aperture.

To estimate \eqref{eq8.22} we still need to deal with the difference
$|\mathfrak{A}_\lambda v - \langle v\rangle_{\Gamma^\lambda(z,\tau)}|$.
We now have all the estimates needed to do it. Again we bring in a solid integral term to have

\begin{multline}\label{eq8.22a}
\mathcal{A}( \langle v\rangle_{\Gamma^\lambda(z,\tau)}-\mathfrak{A}_{\lambda}v)(z,\tau)\\\lesssim\left(\int_0^\infty\lambda^{-1}\fiint_{\Gamma^\lambda(z,\tau)\times (\lambda/2,\lambda)}
|\langle v\rangle_{Q_{\tilde\lambda}(y,s)}-\langle v\rangle_{\Gamma^{\tilde\lambda}(z,\tau)}|^2d\tilde\lambda dyds\,d\lambda\right)^{1/2}.
\end{multline}
As before, inserting the averages over spatial cubes yields a term very similar to the last term of \eqref{eq8.3}. Therefore now we need to understand the difference of averages
$$|\langle v\rangle_{Q_{\lambda}(y,s)\times(\lambda/2,\lambda)}-\langle v\rangle_{\Gamma^{\lambda}(z,\tau)\times(\lambda/2,\lambda)}|^2.$$

Again it is necessary to bring in the smooth version of averages involving $\eta$. Modulo the quantity $S_b(v)(z,\tau)$, 
it suffices to consider
$$|\langle v_\eta\rangle_{Q_{\lambda}(y,s)\times(\lambda/2,\lambda)}-\langle v_\eta\rangle_{\Gamma^{\lambda}(z,\tau)\times(\lambda/2,\lambda)}|^2,$$
where $\eta$ is correspondingly chosen smooth cutoff as in Lemma \ref{l8.5}. We may use the same function $\eta$ for both terms as the union of Whitney cubes $Q_{\lambda}(y,s)\times(\lambda/2,\lambda)$ and 
$\Gamma^{\lambda}(z,\tau)\times(\lambda/2,\lambda)$ will be contained in a slightly larger 
Whitney cube. Then we apply  Lemma \ref{l8.5}, again obtaining terms similar to those in \eqref{eq8.6}.
Hence we have established Lemma \ref{lemma:IminusPBoundedByS(v)} assuming  Lemma \ref{l8.5} which we shall prove below.
\end{proof}

\begin{corollary}\label{cor-mathfrak} For $v$ chosen as in \eqref{e3}, and for any $1<p'<\infty$ for which the $L^{p'}$ Dirichlet problem for the adjoint operator $\mathcal H^*$ is solvable, the following bound holds:
\[\Vert\mathcal{A}[(I-\mathfrak{A}_{\lambda})v]\Vert_{L^{p'}(\mathbb{R}^{n+1})}\lesssim 1.\]
\end{corollary}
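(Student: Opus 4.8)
The plan is to read this off from Lemma~\ref{lemma:IminusPBoundedByS(v)}. The solution $v$ of \eqref{e3} lies in $W^{1,2}_{loc}(\R^{n+1}_+\times\R)$ and satisfies $\mathcal H^*v=\divg(\vec h)$, and $p'\in(1,\infty)$, so that lemma applies with $p=p'$ and yields
\[
\|\mathcal A[(I-\mathfrak A_\lambda)v]\|_{L^{p'}(\R^{n+1})}\lesssim \|S(v)\|_{L^{p'}(\R^{n+1})}+\|\mathcal A[\lambda\vec h]\|_{L^{p'}(\R^{n+1})}.
\]
It therefore suffices to bound the two terms on the right by $O(1)$. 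The first is the square function estimate for the inhomogeneous adjoint Dirichlet problem recorded right after \eqref{e3}: by the results of \cite{Up,U} one has $\|S(v)\|_{L^{p'}}\lesssim1$ for every $p'$ in the solvability range of $(D)_{p'}^{\mathcal H^*}$, which by \cite{AEN2} is all $p'>p_0'$.

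The substantive point is the second term, $\|\mathcal A[\lambda\vec h]\|_{L^{p'}}\lesssim1$, which is a property of the test function $\vec h$ produced by Lemma~\ref{l1bb} (the construction of \cite{KP2}). Besides the pairing inequalities \eqref{e1a}--\eqref{e2}, this $\vec h$ belongs to the parabolic tent space whose norm is $\|\mathcal A(\lambda\,\cdot\,)\|_{L^{p'}}$, the homogeneity matching that of $\lambda\nabla v$---which is exactly why this is the natural norm on the datum of \eqref{e3}---and one has $\|\mathcal A[\lambda\vec h]\|_{L^{p'}}\lesssim\|g\|_{L^{p'}}=1$. I would prove this by unwinding the definition of $\vec h$: it is an average, against the boundary density $g$ of unit $L^{p'}$ norm and the nonnegative cone weight $\beta(X,t,\cdot\,)$ normalised by $\iint_{\Gamma^\varepsilon(Y,s)}\beta\,dX\,dt=1$, of the $L^1$-normalised Whitney bumps $\delta(X)^{-n-2}\chi_{B(X,t,\delta(X)/4)}$ sitting at height $\delta(X)\sim\lambda$. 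Fixing $w\in\partial\Omega$ and a point $(Z,\tau)\in\Gamma(w)$ at height $\mu$, only bumps with $\delta(X)\sim\mu$ and footprint within parabolic distance $\lesssim\mu$ of $w$ contribute to $\vec h(Z,\tau)$; exchanging the order of integration and using the normalisation of $\beta$ gives $|\vec h(Z,\tau)|\lesssim \mu^{-n-2}\int_{\Delta_{c\mu}(w)}|g|\,d\sigma$, and feeding this scale-localised average into the standard Whitney-box/tent-space computation---the same one underpinning the square function bound quoted above, carried out in \cite{KP2}---produces $\|\mathcal A[\lambda\vec h]\|_{L^{p'}}\lesssim\|g\|_{L^{p'}}$.

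I expect the main obstacle to be precisely this last estimate, and within it the demand that the constant be uniform in the truncation parameter $\varepsilon$. The naive pointwise majorant $|\vec h(Z,\tau)|\lesssim Mg(w)$ discards all decay in the height $\mu$, so that integrating over a cone truncated to the scales $(\varepsilon,1/\varepsilon)$ yields a bound blowing up as $\varepsilon\to0$; one must instead retain the decay carried by the scale-dependent average $\mu^{-n-2}\int_{\Delta_{c\mu}(w)}|g|\,d\sigma$ and run the genuine tent-space argument. Granting the two bounds above, adding the displays gives $\|\mathcal A[(I-\mathfrak A_\lambda)v]\|_{L^{p'}(\R^{n+1})}\lesssim1$ uniformly in $\varepsilon$, and the admissible range of $p'$ is exactly $(p_0',\infty)$, the solvability range of $(D)_{p'}^{\mathcal H^*}$ supplied by \cite{AEN2}.
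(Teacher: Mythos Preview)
Your reduction to Lemma~\ref{lemma:IminusPBoundedByS(v)} and the square-function bound $\|S(v)\|_{L^{p'}}\lesssim 1$ match the paper. The remaining piece is $\|\mathcal A(\lambda\vec h)\|_{L^{p'}}\lesssim 1$, and here your argument has a gap.

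The pointwise estimate $|\vec h(Z,\tau)|\lesssim\mu^{-n-2}\int_{\Delta_{c\mu}(w)}|g|\,d\sigma$ you obtain is correct, but it is no improvement on the ``naive'' majorant you rightly criticise: since $|\Delta_{c\mu}|\sim\mu^{n+2}$ the right side is $\sim\fint_{\Delta_{c\mu}(w)}|g|\le Mg(w)$, with \emph{no} decay in $\mu$. Feeding it into the area functional gives
\[
\mathcal A(\lambda\vec h)(w)^2\lesssim\int_0^\infty\lambda\Big(\fint_{\Delta_{c\lambda}(w)}|g|\Big)^2\,d\lambda,
\]
which diverges. Any tent-space computation based only on this bound produces the same divergence; the ``scale-localisation'' you invoke is illusory, because you have already used $\iint_{\Gamma^\varepsilon(Y,s)}\beta\le 1$ uniformly and thereby discarded the information about how the $\beta$-mass is distributed among heights.

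The paper instead decomposes the cone into Whitney layers and uses $\ell^1\hookrightarrow\ell^2$ to obtain $\|\mathcal A(\lambda\vec h)\|_{L^{p'}}\le\|T_1(\vec h)\|_{L^{p'}}$, where
\[
T_1(\vec h)(x,t)=\sum_{k\in\Z}2^k\Big(\fiint_{Q_{2^{k-1}}(x,2^k,t)}|\vec h|^2\,dy\,d\lambda\,ds\Big)^{1/2},
\]
and then quotes the bound $\|T_1(\vec h)\|_{L^{p'}}\lesssim 1$ from \cite{U} (Lemma~4.2.12). That estimate genuinely uses the normalisation $\iint_{\Gamma^\varepsilon(Y,s)}\beta=1$ as a constraint distributing mass \emph{across} Whitney heights, which is precisely what makes the $\ell^1$-in-$k$ sum converge and yields an $\varepsilon$-independent constant. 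This is the missing ingredient in your sketch.
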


\begin{proof}
As noted earlier $\|S(v)\|_{L^{p'}}\lesssim 1$ and hence by Lemma \ref{lemma:IminusPBoundedByS(v)}
it remains to bound $\Vert\mathcal{A}(\lambda \vec{h})\Vert_{L^{p'}}$.
Note that the term \(\Vert\mathcal{A}(\lambda \vec{h})\Vert_{L^{p'}}^{p'}\) can be rewritten as 
    \begin{multline*}
        \Vert\mathcal{A}(\lambda \vec{h})\Vert_{L^{p'}}^{p'}=\int_{\mathbb{R}^{n+1}}\Big(\iint_{\Gamma(x,t)}|\lambda \vec{h}|^2 \lambda^{-n-3}dyd\lambda ds\Big)^{p'/2}dxdt
        \\
        \leq \int_{\mathbb{R}^{n+1}}\Big(\sum_{k\in \mathbb{Z}}\iint_{Q_{2^{k-1}}(x,2^k,t)}|\lambda \vec{h}|^2 \lambda^{-n-3}dyd\lambda ds\Big)^{p'/2}dxdt
        \\
        \leq \int_{\mathbb{R}^{n+1}}\left(\sum_{k\in \mathbb{Z}}2^k\left(\fiint_{Q_{2^{k-1}}(x,2^k,t)}|\vec h|^2dyd\lambda ds\right)^{1/2}\right)^{p'}dxdt
        \\
        \leq \int_{\mathbb{R}^{n+1}}T_1(\vec h)^{p'}dxdt=\Vert T_1(\vec h)\Vert_{L^{p'}}^{p'}.       
    \end{multline*}
Here in the second inequality we have used the fact that $\ell^1(\N)\subset\ell^2(\N)$  with appropriate norm estimate. The operator $T_1(\vec h)$, defined as in Definition~4.2.7 of \cite{U}, can for our purposes be described concretely as
\begin{equation}\label{eq:T1def}
T_1(\vec h)(x,t) := \sum_{k\in\Z} 2^k\left(\fiint_{Q_{2^{k-1}}(x,2^k,t)}|\vec h|^2 \,dy\, d\lambda\, ds\right)^{1/2},
\end{equation}
i.e., a discrete Littlewood--Paley square-function-type sum over dyadic scales of the $L^2$-averages of $\vec h$ on Whitney cubes at the corresponding height above $(x,t)$. 

We now make use of a tent space bound, namely that $\|T_1(\vec h)\|_{L^{p'}}\lesssim 1$ for all $p'>1$, for the $\vec h$ arising as the test field from Lemma~\ref{l1bb}. (See the remarks following the statement of Lemma~\ref{l1bb} earlier.) This bound is a real variable estimate proven by duality exactly as in Lemma 2.13 of \cite{KP2} (and in a more general form in \cite{MPT}), and is carried out explicitly for the parabolic setting in 
Lemma~4.2.12 of \cite{U}. That is, we obtain:
    \[\Vert\mathcal{A}(\lambda \vec{h})\Vert_{L^{p'}}\leq C(n,\lambda,p').\]
\end{proof}

Next we prove Lemma \ref{l8.5}. Calculating as in \cite{Din23}, we multiply the equation $\LL^* v=\divg(\vec{h})$ by $\eta$ and integrate it over the region
$\mathbb R^{n+1}\times [s,t]$, extending the function $v\eta$ by zero outside the support of $\eta$.
Because $\eta$ is a  
function of the spatial variables only, we have
$$\iint_{\mathbb R^{n+1}_+\times\{t\}}v\eta\,dY-\iint_{\mathbb R^{n+1}_+\times\{s\}}v\eta\,dY=
\iint_{\mathbb R^{n+1}_+\times[s,t]}(\partial_\tau v)\eta\,dY\,d\tau$$
$$=-\iint_{\mathbb R^{n+1}_+\times[s,t]}\mbox{\rm div}(A\nabla v)\eta\,dY\,d\tau
-\iint_{\mathbb R^{n+1}_+\times[s,t]}\mbox{\rm div}(\vec h)\eta\,dY\,d\tau$$
$$
=\iint_{\mathbb R^{n+1}_+\times[s,t]}(A\nabla v)\nabla\eta\,dY\,d\tau+\iint_{\mathbb R^{n+1}_+\times[s,t]}\vec h\cdot\nabla\eta\,dY\,d\tau.
$$
%Converting the right-hand side into an average we get a factor 
 The term $\nabla\eta$ in the right-hand side integral is bounded by
 $r^{-1}$, and the length of the interval $[s,t]$  is bounded by $r^2$, therefore
$$\left|\iint_{\mathbb R^{n+1}_+\times\{t\}}v\eta\,dY-\iint_{\mathbb R^{n+1}_+\times\{s\}}v\eta\,dY\right|\lesssim
r^{n+2}\fint_{t_0-r^2}^{t_0+r^2}\,\fiint_{2\mathcal B}[|\nabla v|+|\vec h|]\,dX\,d\tau.$$
We multiply by $c(\eta)$ and take sup over all $s$ and $t$. This gives
\begin{multline*}
\sup_{s,t\in(t_0-r^2,t_0+r^2)}|\langle{v}_{\eta}\rangle(t)-\langle{v}_{\eta}\rangle(s)|\\\lesssim r \fiint_{2Q}[|\nabla v|+|\vec h|]dXd\tau
\le r\left(\fiint_{2Q}[|\nabla v|^2+|\vec h|^2]dXd\tau\right)^{1/2}. 
\end{multline*}
\qed

\subsection{The area bound of the difference}

\begin{lemma}\label{lemma:AreaFctOfDiff}
 For any function \(v\in W^{1,2}_{loc}(\R^{n+1}_+\times\R)\) solving \(\mathcal{H}^*v=\mathrm{div}(\vec h)\) we have for all $1\le p<\infty$
\[\Vert \mathcal{A}\Big(\lambda\big(\tilde{\mathcal{P}}^*_{\lambda}\partial_j(a_{n+1,j}\mathfrak{A}_{\lambda}) - (\tilde{\mathcal{P}}^*_{\lambda}\partial_ja_{n+1,j})\mathfrak{A}_\lambda\big)v\Big)\Vert_{L^{p}}\lesssim \Vert N(v)\Vert_{L^{p}}+\Vert S(v)\Vert_{L^{p}}+\Vert \mathcal{A}(\lambda \vec{h})\Vert_{L^p},\]
Here \(\tilde{\mathcal{P}}_\lambda=\mathcal{P}_{\lambda,k}\) for an integer $k\sim m/2$ and \(\mathfrak{A}_\lambda\) is the averaging operator introduced in \eqref{def-frak}.
\end{lemma}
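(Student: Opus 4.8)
The plan is to exploit the commutator structure hidden in the operator. Write, for short, $T_\lambda v := \bigl(\tilde{\mathcal P}^*_{\lambda}\partial_j(a_{n+1,j}\mathfrak{A}_{\lambda}) - (\tilde{\mathcal P}^*_{\lambda}\partial_j a_{n+1,j})\mathfrak{A}_{\lambda}\bigr)v$ (summation in $j$ understood), and for each fixed $\lambda>0$ regard $\mathfrak{A}_\lambda v(\cdot,\lambda,\cdot)$ as a function on $\R^n\times\R$. The first observation is that $T_\lambda v$ is unchanged if $\mathfrak A_\lambda v$ is replaced by $\mathfrak A_\lambda v-c$ for any constant $c$, since the two extra contributions $c\,\tilde{\mathcal P}^*_\lambda\partial_j a_{n+1,j}$ cancel. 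Denoting by $K^*_\lambda=K^*_{\lambda,k}$ the kernel of $\tilde{\mathcal P}^*_\lambda=\mathcal P^*_{\lambda,k}$ and integrating by parts in $y_j$ (legitimate since $a_{n+1,j}\in L^\infty$ and $\partial_{y_j}K^*_\lambda\in L^1$), this gives the pointwise identity
\[
\lambda\,T_\lambda v(x,\lambda,t)=-\lambda\int_{\R^n\times\R}\partial_{y_j}K^*_{\lambda}(x,t,y,s)\,a_{n+1,j}(y,s)\,\bigl[\mathfrak A_\lambda v(y,s)-\mathfrak A_\lambda v(x,t)\bigr]\,dy\,ds .
\]
So $\lambda T_\lambda v$ tests the differentiated resolvent kernel against $a_{n+1,j}$ and the \emph{oscillation of the averaged function} $\mathfrak A_\lambda v$ at scale $\lambda$.

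Next I would record the kernel bound for the $y_j$-differentiated kernel: differentiating the Gaussian bound of Lemma~\ref{lemma:KernelBounds} (applied to $\mathcal P_{\lambda,k}$, hence to its adjoint) costs a factor $\tfrac{|x-y|}{t-s}\lesssim (t-s)^{-1/2}$, and because we are free to take $k$ large, the surplus time-decay absorbs this, yielding $|\lambda\,\partial_{y_j}K^*_\lambda(x,t,y,s)|\lesssim \lambda^{-n-2}\Phi\bigl(d_p((x,t),(y,s))/\lambda\bigr)$ for some rapidly decreasing $\Phi$; in particular $\int\lambda^{-n-2}\Phi(d_p/\lambda)\,dy\,ds=O(1)$ and the mass on the parabolic annulus $\{2^{k-1}\lambda\le d_p\le 2^k\lambda\}$ is $O(e^{-c4^k})$. (Alternatively one may avoid pointwise bounds and invoke the $L^2$ off-diagonal estimate of Lemma~\ref{lemma:OffDiagonalResolvent2}(2) for $\lambda\tilde{\mathcal P}^*_\lambda\divg_\parallel$, which is what is really being used; this also forces $m$, hence $k\sim m/2$, to be large.)

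The heart of the matter is then the pointwise estimate, for $(x,\lambda,t)\in\Gamma_a(z,\tau)$,
\[
\bigl|\lambda\,T_\lambda v(x,\lambda,t)\bigr|\;\lesssim\;\fint_{Q_{C\lambda}(x,t)}\bigl|v(y,\lambda,s)-\langle v(\cdot,\lambda,\cdot)\rangle_{Q_{C\lambda}(x,t)}\bigr|\,dy\,ds,
\]
with $C$ a fixed dimensional constant. To obtain it I would decompose the $(y,s)$-integral into parabolic annuli $Q_{2^k\lambda}(x,t)\setminus Q_{2^{k-1}\lambda}(x,t)$ about $(x,t)$; on the $k$-th annulus bound $|\mathfrak A_\lambda v(y,s)-\mathfrak A_\lambda v(x,t)|$ by chaining through $O(2^k)$ intermediate averages of $\mathfrak A_\lambda v$ at consecutive points $\sim\lambda$ apart, so that it is $\lesssim 2^k\sup_{(x',t')\in Q_{2^k\lambda}(x,t)}\fint_{Q_{3\lambda}(x',t')}|v(\cdot,\lambda,\cdot)-\langle v(\cdot,\lambda,\cdot)\rangle_{Q_{3\lambda}(x',t')}|$; the Gaussian mass $e^{-c4^k}$ of the kernel on that annulus sums these against $|a_{n+1,j}|\le\Lambda$, and for a fixed aperture the supremum over the $2^k\lambda$-neighbourhood is comparable — with constants depending only on $C$ — to the single average over $Q_{C\lambda}(x,t)$.

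Finally I would insert this bound into $\mathcal A_a(\lambda T_\lambda v)(z,\tau)^2=\iint_{\Gamma_a(z,\tau)}|\lambda T_\lambda v|^2\lambda^{-n-3}\,dx\,d\lambda\,dt$, apply Fubini so the inner integral becomes solid, and split $v(\cdot,\lambda,\cdot)-\langle v\rangle_{Q_{C\lambda}}=(I-\mathfrak A_{C\lambda})v+\bigl(\mathfrak A_{C\lambda}v-\langle v\rangle_{Q_{C\lambda}}\bigr)$. The $(I-\mathfrak A_{C\lambda})v$ part, after a harmless enlargement of the aperture, is dominated by $\mathcal A_b((I-\mathfrak A_\lambda)v)(z,\tau)$, controlled in $L^p$ by $\|S(v)\|_{L^p}+\|\mathcal A(\lambda\vec h)\|_{L^p}$ via Lemma~\ref{lemma:IminusPBoundedByS(v)} (and $O(1)$ for the $v$ of \eqref{e3} by Corollary~\ref{cor-mathfrak}). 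The second part is the oscillation of the \emph{smoother} averaged function $\mathfrak A_{C\lambda}v$ over $Q_{C\lambda}(x,t)$: its spatial oscillation is treated by Poincar\'e and feeds $\lambda\nabla v$, i.e. $S(v)$, while its oscillation in the time direction is precisely the setting of Lemma~\ref{l8.5}, contributing $\lesssim r\bigl(\fiint|\nabla v|^2+|\vec h|^2\bigr)^{1/2}$ and hence, after integration over the cone, $\lesssim S_b(v)(z,\tau)+\mathcal A_b(\lambda\vec h)(z,\tau)$. Taking $L^p$-norms yields the assertion, the term $\|N(v)\|_{L^p}$ serving as a safety net for the crudest estimates of $\mathfrak A_\lambda v$ in the large-$k$ tails. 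I expect the main obstacle to be this last step: organising the difference-of-averages estimate so that only the controllable quantities $(I-\mathfrak A_\lambda)v$, $\lambda\nabla v$ and $\lambda\vec h$ appear with no surviving uncontrolled additive constant — this is exactly where the $c$-invariance of $T_\lambda$ and the time-oscillation Lemma~\ref{l8.5} are indispensable.
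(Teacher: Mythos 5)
Your central idea — exploit the invariance of the commutator under subtracting a constant, then subtract the local average $\mathfrak A_\lambda v(x,t)$ to reduce everything to the oscillation of $v$ at scale $\lambda$ around the point of evaluation — is appealing, and it does differ from the paper. The paper instead subtracts the vertex average $\tilde v(\lambda)=\langle v\rangle_{Q_\lambda(z,\tau)}$ (constant on each slice $\Gamma^\lambda(z,\tau)$), decomposing into $I_\lambda+II_\lambda$ with $II_\lambda$ carrying the shift from local to vertex average. Unfortunately your version has a genuine gap at exactly the step you call the heart of the matter.

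The pointwise inequality $|\lambda T_\lambda v(x,\lambda,t)|\lesssim \fint_{Q_{C\lambda}(x,t)}|v-\langle v\rangle|$ rests on a pointwise Gaussian bound for $\partial_{y_j}K^*_\lambda$. For the parabolic operator $\mathcal H_\parallel$ with merely bounded measurable elliptic coefficients, Lemma \ref{lemma:KernelBounds} (Aronson--Nash) gives Gaussian decay for the kernel itself but \emph{not} for its spatial gradient; gradients of solutions are not pointwise bounded in the De Giorgi--Nash class. Differentiating the Gaussian envelope in $y$, as you propose, does not produce a bound on the kernel's derivative unless the kernel is known to be differentiable with the differentiated envelope — which requires H\"older (or better) coefficients. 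So the pointwise estimate is not available.

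You acknowledge the $L^2$ off-diagonal route (Lemma \ref{lemma:OffDiagonalResolvent2}(2)) as an alternative, but it is not a drop-in replacement: those estimates control $\lambda\tilde{\mathcal P}^*_\lambda\divg_\parallel$ applied to a \emph{fixed} function, and your constant $\mathfrak A_\lambda v(x,t)$ varies with the evaluation point $(x,t)\in\Gamma^\lambda(z,\tau)$. To integrate $|\lambda T_\lambda v|^2$ over a slice via $L^2$ off-diagonal bounds you must first freeze a single constant per slice — precisely what the paper does by taking $\tilde v(\lambda)$ centered at the vertex — and that re-introduces the term $II_\lambda$ containing $\langle v\rangle_{Q_\lambda(z,\tau)}-\langle v\rangle_{Q_\lambda(x,t)}$. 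That term is the one that requires the Carleson estimate $\|C(\lambda\tilde{\mathcal P}^*_\lambda\partial_j a_{n+1,j})\|_{L^\infty}\lesssim 1$ of Lemma \ref{lemma:CarelsonBound} together with the stopping-time argument, and it is exactly where the $\|N(v)\|_{L^p}$ term in the conclusion is earned. Your proposal never actually uses Lemma \ref{lemma:CarelsonBound} and treats $\|N(v)\|_{L^p}$ only as an unspecified ``safety net'', which hides rather than fills the hole. A smaller point: your chaining estimate on the $k$-th parabolic annulus should produce $O(4^k)$ intermediate steps in the time direction rather than $O(2^k)$, but this is harmless under the super-exponential kernel decay and is not the real issue. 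The real issue is the pointwise-kernel-derivative step; once you pass to the $L^2$-averaged framework you essentially need the paper's $I_\lambda+II_\lambda$ decomposition, the Carleson bound, and the dyadic off-diagonal argument on the slices.
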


Using the bounds for $v$ it follows that:

\begin{corollary}\label{c712} For all $1<p'<\infty$ for which the $L^{p'}$ Dirichlet problem for the adjoint PDE  
$\mathcal{H}^*u=0$ on $\R^{n+1}_+\times\R$ is solvable we have that 
\[\Vert \mathcal{A}\Big(\lambda\big(\tilde{\mathcal{P}}^*_{\lambda}\partial_j(a_{n+1,j}\mathfrak{A}_{\lambda}) - (\tilde{\mathcal{P}}^*_{\lambda}\partial_ja_{n+1,j})\mathfrak{A}_\lambda\big)v\Big)\Vert_{L^{p'}}\lesssim 1.\]
\end{corollary}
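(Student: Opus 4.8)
The plan is the following. The corollary is immediate from Lemma~\ref{lemma:AreaFctOfDiff}: for the $p'$ in question one has $\Vert N(v)\Vert_{L^{p'}}\lesssim 1$ and $\Vert S(v)\Vert_{L^{p'}}\lesssim 1$ by \cite{Up,U} (valid precisely because the $L^{p'}$ Dirichlet problem for $\mathcal H^*$ is solvable), while $\Vert\mathcal A(\lambda\vec h)\Vert_{L^{p'}}\lesssim 1$ was shown inside the proof of Corollary~\ref{cor-mathfrak}. So the task is to prove Lemma~\ref{lemma:AreaFctOfDiff}. Denote the operator in its statement by $T_\lambda v$, and let $\mathcal K_\lambda(x,t,y,s)$ be the integral kernel of $\tilde{\mathcal P}^*_\lambda$ acting in $(x,t)$ at fixed $\lambda$. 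Moving $\partial_j$ onto the kernel and using that $a_{n+1,j}=a_{n+1,j}(x,t)$ is independent of $\lambda=x_{n+1}$, one obtains the commutator identity
\[
\lambda T_\lambda v(x,\lambda,t)=-\iint_{\R^n\times\R}\lambda\,\partial_{y_j}\mathcal K_\lambda(x,t,y,s)\,a_{n+1,j}(y,s)\,\bigl[\mathfrak A_\lambda v(y,\lambda,s)-\mathfrak A_\lambda v(x,\lambda,t)\bigr]\,dy\,ds,
\]
so that only the oscillation of $(y,s)\mapsto\mathfrak A_\lambda v(y,\lambda,s)$ enters (consistently with the fact that $T_\lambda$ annihilates $\lambda$-slices of constants).

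Since the $L^p$ norm of an area function is unchanged, up to aperture, if $|F(x,\lambda,t)|^2$ is replaced by its average $\fint_{B_{\lambda/2}(x,t)}|F(\cdot,\lambda,\cdot)|^2$, and since $\lambda\tilde{\mathcal P}^*_\lambda\partial_j$ obeys the off-diagonal $L^2$ bounds of Lemma~\ref{lemma:OffDiagonalResolvent2}(2) (valid for $\tilde{\mathcal P}^*_\lambda$ exactly as for $\mathcal P_\lambda$), I would next split $\R^n\times\R=\bigcup_{k\ge0}A_k(x,t)$ into parabolic annuli $A_0=B_\lambda(x,t)$, $A_k=B_{2^k\lambda}(x,t)\setminus B_{2^{k-1}\lambda}(x,t)$, subtract the constant $\mathfrak A_\lambda v(x,\lambda,t)$ on each, and invoke $\|a_{n+1,j}\|_\infty\le\Lambda$ together with $|A_k|\lesssim 2^{(n+2)k}|B_\lambda|$ to get, for every $M$,
\[
\Bigl(\fint_{B_{\lambda/2}(x,t)}|\lambda T_\lambda v(\cdot,\lambda,\cdot)|^2\Bigr)^{1/2}\lesssim_M\sum_{k\ge0}2^{-Mk}\Bigl(\fint_{B_{2^k\lambda}(x,t)}\bigl|\mathfrak A_\lambda v(y,\lambda,s)-\mathfrak A_\lambda v(x,\lambda,t)\bigr|^2\,dy\,ds\Bigr)^{1/2}.
\]
Plugging this into $\mathcal A(\lambda T_\lambda v)$ and using Minkowski's inequality in $k$, Lemma~\ref{lemma:AreaFctOfDiff} reduces to the bound, for each fixed $k$ and a fixed polynomial $P$,
\[
\Bigl\|\Bigl(\iint_{\Gamma_a(\cdot)}\fint_{B_{2^k\lambda}(x,t)}\bigl|\mathfrak A_\lambda v(y,\lambda,s)-\mathfrak A_\lambda v(x,\lambda,t)\bigr|^2\,dy\,ds\;\frac{dx\,d\lambda\,dt}{\lambda^{n+3}}\Bigr)^{1/2}\Bigr\|_{L^p}\lesssim P(2^k)\bigl(\|N(v)\|_{L^p}+\|S(v)\|_{L^p}+\|\mathcal A(\lambda\vec h)\|_{L^p}\bigr),
\]
after which the choice $M>\deg P$ makes the $k$-series converge and the proof is complete.

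The heart of the matter is this last estimate: a quantitative, $k$-localized control of the mean oscillation of $\mathfrak A_\lambda v$ over balls of radius $2^k\lambda$. To obtain it I would insert solid \emph{interior} averages. Writing $\langle v\rangle_{T(X,t)}$ for the average of $v$ over an interior parabolic box $T(X,t)$ of scale comparable to $\delta(X,t)$ around a cone point $(X,t)$ (so $T$ stays at heights bounded away from $\partial\Omega$), decompose
\[
\mathfrak A_\lambda v(y,\lambda,s)-\mathfrak A_\lambda v(x,\lambda,t)=\bigl[\mathfrak A_\lambda v(y,\lambda,s)-\langle v\rangle_{T(y,\lambda,s)}\bigr]+\bigl[\langle v\rangle_{T(y,\lambda,s)}-\langle v\rangle_{T(x,\lambda,t)}\bigr]+\bigl[\langle v\rangle_{T(x,\lambda,t)}-\mathfrak A_\lambda v(x,\lambda,t)\bigr].
\]
The two outer ``flat minus solid at scale $\lambda$'' terms are estimated by $\lambda\fiint_T|\nabla v|$ (integrate $\partial_\lambda v$ over an interval of length $\sim\lambda$ contained in $\Omega$), hence, after the cone integration and a change of aperture, by $S_b(v)$. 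For the middle term one chains $T(x,\lambda,t)$ to $T(y,\lambda,s)$ through $O(4^k)$ overlapping interior boxes; across one link the difference is handled by the parabolic Poincar\'e inequality in the spatial directions and by the argument of Lemma~\ref{l8.5} (using $\mathcal H^*v=\divg\vec h$ against a cutoff compactly supported inside $\Omega$) in the time direction, producing $\lambda\fiint(|\nabla v|+|\vec h|)$ per link; summing the links, squaring, and applying Fubini converts this into $S_b(v)+\mathcal A_b(\lambda\vec h)$ with aperture $b\sim 2^ka$. (This part is essentially Lemma~\ref{lemma:IminusPBoundedByS(v)} made quantitative in $k$.) The genuinely delicate step, and the main obstacle, is that once $2^k\lambda\gtrsim\delta$ the boxes and chains can no longer be routed while staying away from $\partial\Omega$; in that regime the relevant averages of $v$ are estimated crudely by nontangential maximal function values — using that the averaging set of $\mathfrak A_\lambda$ at height $\lambda$ over a point $(y,s)$ is contained in the cone $\Gamma_a(y,s)$, so $|\mathfrak A_\lambda v(y,\lambda,s)|\lesssim N(v)(y,s)$ — which is exactly what produces the $\|N(v)\|_{L^p}$ term. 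Throughout, passing from these pointwise cone bounds to $L^p$ bounds costs only the usual change-of-aperture factor, which is polynomial in $2^k$ and therefore harmless. Collecting the interior and boundary contributions and summing over $k$ then proves the lemma, and with it the corollary.
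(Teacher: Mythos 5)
Your derivation of the corollary itself is exactly the paper's: the paper proves Lemma~\ref{lemma:AreaFctOfDiff} and then obtains the corollary by plugging in $\|N(v)\|_{L^{p'}},\|S(v)\|_{L^{p'}}\lesssim 1$ from \cite{Up,U} and $\|\mathcal A(\lambda\vec h)\|_{L^{p'}}\lesssim 1$ from (the proof of) Corollary~\ref{cor-mathfrak}. Where your proposal diverges is in the proof of Lemma~\ref{lemma:AreaFctOfDiff}, and there you take a genuinely different route. The paper subtracts the \emph{fixed} constant $\tilde v(\lambda)=\langle v\rangle_{Q_\lambda(z,\tau)}$ and splits the operator into $I_\lambda+II_\lambda$: the piece $II_\lambda$, which records the discrepancy between $\tilde v(\lambda)$ and $\mathfrak A_\lambda v(x,\lambda,t)$, is then handled by pairing the Carleson measure estimate of Lemma~\ref{lemma:CarelsonBound} with $N(v)$ via a stopping-time argument, while $I_\lambda$ is dealt with by an annular decomposition centered at the cone vertex together with the off-diagonal bounds and Lemma~\ref{l8.5}. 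You instead subtract $\mathfrak A_\lambda v(x,\lambda,t)$ \emph{directly}, using the observation that the commutator annihilates $\lambda$-slice constants, and thereby never meet a $II_\lambda$-type term; accordingly your argument has no use for Lemma~\ref{lemma:CarelsonBound} at all, which is a real simplification within this lemma (the Carleson bound is still needed elsewhere, e.g.\ for the term $VI_2$ in Section~3). Both routes then share the same ingredients: off-diagonal/kernel decay, Lemma~\ref{l8.5}, Poincar\'e and chaining, and polynomial aperture-change factors dominated by superexponential decay.

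One piece of your reasoning, however, does not hold up. You claim that $N(v)$ enters in a ``far regime $2^k\lambda\gtrsim\delta$'' where the chaining can no longer be routed inside $\Omega$. But for a cone point $(x,\lambda,t)$ one has $\delta(x,\lambda,t)\sim\lambda$, so this condition is satisfied for every $k\geq 0$ and picks out nothing. Moreover the chaining boxes all sit at height comparable to $\lambda$ with thickness $<\lambda$ and so never approach $\partial\Omega$, regardless of how far apart $(x,t)$ and $(y,s)$ are on the slice; the number of links is merely $O(4^k)$, which is polynomial and harmless. Thus the stated justification for the appearance of $N(v)$ is spurious. This does not invalidate your proposal — adding $\|N(v)\|_{L^p}$ to the right-hand side only weakens the lemma, and the corollary goes through either way — but it is worth recognizing that your commutator route, if pushed through carefully, appears to yield the bound in Lemma~\ref{lemma:AreaFctOfDiff} in terms of $S(v)$ and $\mathcal A(\lambda\vec h)$ alone, whereas the paper's $I_\lambda+II_\lambda$ decomposition genuinely requires $N(v)$ to absorb the Carleson pairing.
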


Hence the claim 4) of the \eqref{list} holds. We now prove Lemma \ref{lemma:AreaFctOfDiff}.

\begin{proof}
Fix a boundary point $(z,\tau)\in\R^n\times \R$ and let $\Gamma(z,\tau)$ be its nontangential cone.
Consider a point $(X,t)=(x,\lambda,t)\in \Gamma(z,\tau)$. Let us also recall the notation introduced previously,  $\Gamma^\lambda(z,\tau)$ is the intersection of $\Gamma(z,\tau)$ with the hyperplane $x_{n+1}=\lambda$.

We can then write 
\begin{multline*}
\lambda\big[\tilde{\mathcal{P}}^*_{\lambda}\partial_j(a_{n+1,j}\mathfrak{A}_{\lambda}v) - (\tilde{\mathcal{P}}^*_{\lambda}\partial_ja_{n+1,j})\mathfrak{A}_\lambda v\big](x,\lambda,t)=
\\
\lambda\big[\tilde{\mathcal{P}}^*_{\lambda}\partial_j(a_{n+1,j}\mathfrak{A}_{\lambda}v) - (\tilde{\mathcal{P}}^*_{\lambda}\partial_ja_{n+1,j})\langle v\rangle_{Q_\lambda(z,\tau)}\big](x,\lambda,t)
\\
+\lambda(\tilde{\mathcal{P}}^*_{\lambda}\partial_ja_{n+1,j})[\langle v(\cdot,\lambda,\cdot)\rangle_{Q_\lambda(z,\tau)}-\langle v(\cdot,\lambda,\cdot)\rangle_{Q_\lambda(x,t)}](x,\lambda,t)=:I_\lambda+II_\lambda.
\end{multline*}

The second term is easy to handle. For the area function arising from it:
\begin{multline*}
\iint_{\Gamma(z,\tau)}\lambda^{-n-1}|\tilde{\mathcal{P}}^*_{\lambda}\partial_ja_{n+1,j}|^2|\langle v(\cdot,\lambda,\cdot)\rangle_{Q_\lambda(x,t)}-\langle v(\cdot,\lambda,\cdot)\rangle_{Q_\lambda(z,\tau)}|^2 dX dt\\
\le N(\langle v(\cdot,\lambda,\cdot)\rangle_{Q_\lambda(x,t)}-\langle v(\cdot,\lambda,\cdot)\rangle_{Q_\lambda(z,\tau)})^2(z,\tau)\iint_{\Gamma(z,\tau)}\lambda^{-n-1}|\tilde{\mathcal{P}}^*_{\lambda}\partial_ja_{n+1,j}|^2dX dt,
\end{multline*}
and since $N(\langle v(\cdot,\lambda,\cdot)\rangle_{Q_\lambda(x,t)}-\langle v(\cdot,\lambda,\cdot)\rangle_{Q_\lambda(z,\tau)})(z,\tau)\le 2N_b(v)(z,\tau)$ for $N_b$ defined with a cone of wider aperture, it follows that
$$\mathcal {A}(II_\lambda)(z,\tau)\lesssim N_b(v)(z,\tau)\left(\iint_{\Gamma(z,\tau)}\lambda^{-n-3}|\lambda\tilde{\mathcal{P}}^*_{\lambda}\partial_ja_{n+1,j}|^2dXdt \right)^{1/2}.$$
Hence, using the Carleson bound \eqref{eq:CarlesonFctBound} and the classical stopping time argument we get that for all $1\le p<\infty$:
$$\|\mathcal {A}(II_\lambda)\|_{L^p}\lesssim \|C(\lambda\tilde{\mathcal{P}}^*_{\lambda}\partial_ja_{n+1,j} )\|_{L^\infty}\|N(v)\|_{L^p}\lesssim \|N(v)\|_{L^p}.$$

Considering the term $I_\lambda$, we see that, for a fixed $\lambda$, 
$\tilde{v}(\lambda):=\langle v\rangle_{Q_\lambda(z,\tau)}=\langle v(\cdot,\lambda,\cdot)\rangle _{Q_\lambda(z,\tau)}$ 
is constant for all $(x,\lambda,t)\in\Gamma(z,\tau)$ and so the two 
terms can be combined. That is,
$$I_\lambda=\lambda\big[\tilde{\mathcal{P}}^*_{\lambda}\partial_j(a_{n+1,j}(\mathfrak{A}_{\lambda}v-\tilde{v}(\lambda)))](x,\lambda,t).$$

Fix $\lambda>0$ and consider the decomposition of the boundary $\R^n\times\R$ as in \eqref{eq6.2a}
such that the property \eqref{eq6.2} holds for all $j>j_0$. To avoid confusion (as the line above contains an unrelated index $j$) we write below $\partial_k$ instead. Then
\begin{multline*}
I_\lambda= \lambda\big[\tilde{\mathcal{P}}^*_{\lambda}\partial_k[\chi_{\Delta_{2^{j_0}\lambda}}a_{n+1,k}(\mathfrak{A}_{\lambda}v-\tilde{v}(\lambda))]]\\
+
\sum_{j>j_0}\lambda\big[\tilde{\mathcal{P}}^*_{\lambda}\partial_k[\chi_{\left(\Delta_{2^j\lambda}\setminus \Delta_{2^{j-1}\lambda}\right)}a_{n+1,k}(\mathfrak{A}_{\lambda}v-\tilde{v}(\lambda))]]=III_\lambda+IV_\lambda.
\end{multline*}
For the term $IV$ using the off-diagonal estimate (2) of Lemma \ref{lemma:OffDiagonalResolvent2} 
and Minkowski's inequality
we have that
\begin{multline}\label{eq8.12}
\mathcal {A}_a(IV_\lambda)^2(z,\tau)\\=\int_0^\infty \lambda^{-n-3}\int_{\Gamma_a^\lambda(z,\tau)}
\left|\sum_{j>j_0}\lambda\tilde{\mathcal{P}}^*_{\lambda}\partial_k[\chi_{\left(\Delta_{2^j\lambda}\setminus \Delta_{2^{j-1}\lambda}\right)}a_{n+1,k}(\mathfrak{A}_{\lambda}v-\tilde{v}(\lambda))]
\right|^2dxdt d\lambda\\
\le\int_0^\infty\lambda^{-n-3}\left(\sum_{j>j_0}\left(\int_{\Gamma_a^\lambda(z,\tau)}
\left|\lambda\tilde{\mathcal{P}}^*_{\lambda}\partial_k[\chi_{\left(\Delta_{2^j\lambda}\setminus \Delta_{2^{j-1}\lambda}\right)}a_{n+1,k}(\mathfrak{A}_{\lambda}v-\tilde{v}(\lambda))]\right|^2 dxdt
\right)^{1/2}\right)^2d\lambda\\
\lesssim \int_0^\infty\lambda^{-n-3}\left(\sum_{j>j_0}e^{-c2^j}\left(\int_{\Delta_{2^j\lambda}(z,\tau)}
 |\mathfrak{A}_{\lambda}v-\tilde{v}(\lambda)|^2 dx dt
\right)^{1/2}\right)^{2}d\lambda\\
\lesssim \sum_{j>j_0}\int_0^\infty\lambda^{-n-3} e^{-c2^j} \int_{\Delta_{2^j\lambda}(z,\tau)}
 |\mathfrak{A}_{\lambda}v-\tilde{v}(\lambda)|^2 dx dt\,d\lambda.
\end{multline}
using H\"older's inequality to obtain the last line. 

We consider the difference in the last line. $\mathfrak{A}_{\lambda}v(x,t)=\langle v(\cdot,\lambda,\cdot)\rangle_{Q_\lambda(x,t)}$ and $\tilde{v}(\lambda)=\langle v(\cdot,\lambda,\cdot)\rangle_{Q_\lambda(z,\tau)}$
are two averages over cubes of the same size (just shifted). It would be more convenient if these two averages
were over all spatial variables. By Fubini, for any such $Q$ as above,
\begin{equation}\label{eq8.13}
|\langle v(\cdot,\lambda,\cdot)\rangle_{Q}-\langle v\rangle_{Q\times(\lambda,2\lambda)}|^2
\lesssim \lambda^2\left(\fiint_{Q\times(\lambda,2\lambda)}|\partial_\lambda v|dX dt\right)^2
\lesssim \lambda^2\fiint_{Q\times(\lambda,2\lambda)}|\partial_\lambda v|^2dX dt,
\end{equation}
and hence 
\begin{multline}\label{eq8.14}
\int_{\Delta_{2^j\lambda}(z,\tau)}
 |\mathfrak{A}_{\lambda}v-\tilde{v}(\lambda)|^2 dx dt\\\lesssim
 \int_{\Delta_{2^j\lambda}(z,\tau)} |\langle v\rangle_{Q_\lambda(x,t)\times(\lambda,2\lambda)}-\langle v\rangle_{Q_\lambda(z,\tau)\times(\lambda,2\lambda)}|^2 dx dt
 + \lambda^{n+4}\fiint_{\Delta_{2^{j+1}\lambda}(z,\tau)\times(\lambda,2\lambda)}|\partial_\lambda v|^2 dX dt.
\end{multline}
To understand the first term of the last line we need to estimate how the averages of two cubes of size $\lambda$ shifted in space and time will differ when their (parabolic) distance is at most $2^j\lambda$. We do not have to be very precise with our estimate, as the term $e^{-c2^j}$ decays extremely rapidly.
With the help of an intermediate cube $Q_\lambda(x,\tau)\times(\lambda,2\lambda)$, the matter can be
reduced to two fundamental cases. 

The first case is where two cubes are only shifted in space but not in time. Then a calculation similar to
\eqref{eq8.13} yields that 
$$ |\langle v\rangle_{Q_\lambda(x,\tau)\times(\lambda,2\lambda)}-\langle v\rangle_{Q_\lambda(z,\tau)\times(\lambda,2\lambda)}|^2\lesssim (2^j\lambda)^2\fiint_{H}|\nabla_x v|^2dX dt,
$$
where $H\subset Q_{2^{j+1}\lambda}(z,\tau)\times(\lambda,2\lambda)$ is the convex hull of the union of the two cubes. The second case is where the two cubes have the same spatial coordinates but differ in time.
In that case, we need to bring in the smooth averages from Lemma \ref{l8.5}. This lemma implies that

$$ |\langle v_\eta\rangle_{2Q_\lambda(x,\tau)\times(\lambda/2,3\lambda)}-\langle v_\eta\rangle_{2Q_\lambda(x,t)\times(\lambda/3,3\lambda)}|^2\lesssim (2^j\lambda)^2\fiint_{\tilde H}\left[|\nabla_{x,\lambda} v|^2+|\vec h|^2\right]dX dt,
$$
where $\tilde H$ is a small enlargement of the convex hull of the two cubes. Relating the two averages $\langle v\rangle$ and $\langle v_\eta\rangle$ yields another term of the type 
$\lambda^2\displaystyle\fiint_{\tilde 2Q_\lambda\times(\lambda/2,3\lambda)}|\nabla_{x,\lambda} v|^2dX dt$.
Hence we have
$$ |\langle v\rangle_{Q_\lambda(x,t)\times(\lambda,2\lambda)}-\langle v\rangle_{Q_\lambda(z,\tau)\times(\lambda,2\lambda)}|^2\lesssim 
(2^{j}\lambda)^2\fiint_{\Delta_{2^{j+1}\lambda}(z,\tau)\times(\lambda/2,3\lambda)}\left[|\nabla_{x,\lambda} v|^2+|\vec h|^2\right]dX dt.
$$
And, by \eqref{eq8.14},
$$
\int_{\Delta_{2^j\lambda}(z,\tau)}
 |\mathfrak{A}_{\lambda}v-\tilde{v}(\lambda)|^2 dx dt\lesssim
 (2^{j}\lambda)^{n+4}\fiint_{\Delta_{2^{j+1}\lambda}(z,\tau)\times(\lambda/2,3\lambda)}\left[|\nabla_{x,\lambda} v|^2+|\vec h|^2\right]dX dt.
$$
We use this in \eqref{eq8.12}. It follows that
\begin{multline*}
\mathcal {A}_a(IV_\lambda)^2(z,\tau)\\\lesssim
 \sum_{j>j_0}\int_0^\infty\lambda^{-n-3} e^{-c2^j} (2^{j}\lambda)^{n+4}\fiint_{\Delta_{2^{j+1}\lambda}(z,\tau)\times(\lambda/2,3\lambda)}\left[|\nabla_{x,\lambda} v|^2+|\vec h|^2\right]dX dt\\
 \le  \sum_{j>j_0}e^{-c2^j}2^{2j} \iint_{\Gamma_{2^{j+2}}(z,\tau)}
 \left[|\nabla_{x,\lambda} v|^2+|\vec h|^2\right]\lambda^{-n-1}dXdt.
\end{multline*}
Above, consistent with the previous notation,  $\Gamma_{2^{j+2}}(z,\tau)$ denotes the nontangential cone of aperture $2^{j+2}$. By Minkowski we conclude that
$$\mathcal {A}_a(IV_\lambda)(z,\tau)\lesssim \sum_{j\ge j_0}e^{-c2^{j-1}}2^j\left[ S_{2^{j+2}}(v)+
\mathcal A_{2^{j+2}}(\lambda \vec{h})\right](z,\tau).
$$
The subscripts indicate that $S$ and $\mathcal A$ are defined using enlarged cones.
In the $L^p$ norm, the area function of cones with
different apertures can be estimated
via real variable arguments; namely, $\|\mathcal A_{2^{j}}(\cdot)\|_{L^p}\le e^{c(n,p)j}\|\mathcal A_{1}(\cdot)\|_{L^p}$.
Therefore,
$$\|\mathcal {A}_a(IV_\lambda)\|_{L^p}\lesssim \sum_{j\ge j_0}2^je^{-c2^{j-1}+c(n,p)j}\left[ \|S_1(v)\|_{L^p}+
\|\mathcal A_{1}(\lambda \vec{h})\|_{L^p}\right]\lesssim \|S(v)\|_{L^p}+\|\mathcal A(\lambda \vec{h})\|_{L^p},
$$
using summability of the series $\sum_{j\ge j_0}2^je^{-c2^{j-1}+c(n,p)j}$,
which takes care of the term $IV$ completely. 

We now deal with the term $III_\lambda$, which is straightforward since we have the bound
$$\int_{\R^n\times\R}|\lambda \nabla_\parallel \mathcal P_\lambda f|^2 dx dt\lesssim 
\int_{\R^n\times\R}|f|^2dxdt,$$
by iterating  \eqref{E-bound}. Then, by duality,
$$\int_{\R^n\times\R}|\lambda  \mathcal P^*_\lambda \divg_\parallel(g)|^2 dx dt\lesssim 
\int_{\R^n\times\R}|g|^2dxdt,$$
and in particular for the term $III_\lambda$
\begin{multline*}
\int_{\Gamma^\lambda_a(z,\tau)}\left|\lambda\tilde{\mathcal{P}}^*_{\lambda}\partial_k[\chi_{\Delta_{2^{j_0}\lambda}}a_{n+1,k}(\mathfrak{A}_{\lambda}v-\tilde{v}(\lambda))]\right|^2dxdt\\
\lesssim \int_{\R^n\times\R}|\chi_{\Delta_{2^{j_0}\lambda}}a_{n+1,k}(\mathfrak{A}_{\lambda}v-\tilde{v}(\lambda))|^2dxdt\le \|A\|_{L^\infty}
\int_{\Delta_{2^{j_0}}}|\mathfrak{A}_{\lambda}v-\tilde{v}(\lambda)|^2dxdt.
\end{multline*}
From this we get 
$$
\mathcal {A}_a(III_\lambda)^2(z,\tau)
\lesssim \int_0^\infty\lambda^{-n-3} \int_{\Delta_{2^{j_0}\lambda}(z,\tau)}
 |\mathfrak{A}_{\lambda}v-\tilde{v}(\lambda)|^2 dx dt\,d\lambda.
$$
which is an estimate analogous to \eqref{eq8.12} but with no decay, which is not needed, as this is a single term. The rest of the argument is identical to that for term IV. This concludes the proof.

\end{proof}
%%%%%%%%%%%%%%%%%%%

\bibliographystyle{alpha}
%\bibliography{reference}

\end{document}